\documentclass{amsart}
\usepackage{amsmath}
\usepackage{amssymb}
\usepackage{amsthm}
\usepackage{amsfonts}
\usepackage{mathtools}
\usepackage{fullpage}
\usepackage{booktabs}
\usepackage{enumerate}
\usepackage{graphicx}
\usepackage{xcolor}
\usepackage[shortlabels]{enumitem}
\usepackage{esint}
\usepackage{tikz-cd}
\usepackage{float}
\usepackage{stmaryrd}
\usepackage{hyperref}
\usepackage{url}
\usepackage{fontawesome}
\usepackage{biblatex}
\usepackage{faktor}
\usepackage{leftindex}
\usepackage{MnSymbol}
\usepackage{dashbox}

\newtheorem{theorem}{Theorem}

\newtheorem{corollary}{Corollary}
\newtheorem{proposition}{Proposition}
\newtheorem{remark}{Remark}
\newtheorem*{theorem*}{Theorem}
\newtheorem{lemma}{Lemma}
\newtheorem*{corollary*}{Corollary}
\newtheorem*{remark*}{Remark}
\newtheorem*{task*}{Task}
\newtheorem*{wish*}{Wish}
\newtheorem*{notation*}{Notation}

\theoremstyle{definition}
\newtheorem{example}{Example}
\newtheorem{definition}{Definition}
\newtheorem*{solution*}{Solution}
\newtheorem*{idea*}{Idea}

\newcommand{\Z}{\mathbb{Z}}

\newcommand{\C}{\mathbb{C}}

\newcommand{\st}{\ \big|\ }

\renewcommand{\O}{\mathcal{O}}
\newcommand{\B}{\mathcal{B}}

\newcommand{\Id}{\operatorname{Id}}

\newcommand{\dee}{\partial}

\renewcommand{\bar}{\overline}

\newcommand{\im}{\operatorname{im}}
\newcommand{\vspan}{\operatorname{span}}

\newcommand{\M}{\mathcal{M}}
\renewcommand{\P}{\mathbb{P}}

\renewcommand{\hat}{\widehat}

\newcommand{\Hom}{\operatorname{Hom}}

\renewcommand{\tilde}{\widetilde}

\newcommand{\res}{\operatorname{res}}

\newcommand{\Spec}{\operatorname{Spec}}

\newcommand{\m}{\mathfrak{m}}
\newcommand{\Aut}{\operatorname{Aut}}
\newcommand{\A}{\mathbb{A}}

\newcommand{\Gal}{\operatorname{Gal}}

\newcommand{\SL}{\operatorname{SL}}
\newcommand{\GL}{\operatorname{GL}}

\renewcommand{\sl}{\mathfrak{sl}}

\newcommand{\D}{\mathcal{D}}
\newcommand{\g}{\mathfrak{g}}
\newcommand{\h}{\mathfrak{h}}

\renewcommand{\epsilon}{\varepsilon}

\newcommand{\stab}{\operatorname{stab}}
\newcommand{\codim}{\operatorname{codim}}
\newcommand{\mSpec}{\operatorname{mSpec}}

\newcommand{\fc}{\mathfrak{c}}
\newcommand{\Frac}{\operatorname{Frac}}
\newcommand{\PC}{\operatorname{PC}}
\newcommand{\V}{\mathcal{V}}
\newcommand{\dashto}{\dashedrightarrow}

\newcommand\snode[2]{\fbox{\begin{tabular}{@{}c@{}}\small #1 \\ \small #2\end{tabular}}}
\newcommand\dnode[2]{\dbox{\begin{tabular}{@{}c@{}}\small #1 \\ \small #2\end{tabular}}}

\bibliography{refs.bib}

\title{Restriction theorems: from orbits and Chevalley to periods and Galois}

\dedicatory{In honor of Professor Shing-Tung Yau on the occasion of his seventy-fifth birthday}

\author{Bong Lian}
\address{School of Mathematical Sciences, Fudan University, Shanghai, 200433, China}
\address{Shanghai Institute for Mathematics and Interdisciplinary Sciences (SIMIS), Shanghai, 200433, China}
\email{lianbong@gmail.com}

\author{Kamryn Spinelli}
\address{Department of Mathematics and Statistics, Queen's University, Kingston, ON K7L 3N6, Canada}
\email{k.spinelli@queensu.ca}

\begin{document}

\begin{abstract}
    Using a new approach based on Galois theory, we study subvarieties of complex representations of reductive groups which satisfy restriction properties on their invariant rings and function fields, along the lines of the Chevalley restriction theorem. For a certain well-behaved class of representations, we explicitly parametrize candidates for these restriction properties and explain a technique to understand their deformations in complex families. We also give algebraic and geometric characterizations of the Chevalley restriction property which clarify how this perspective connects back to previous orbit-theoretic approaches. Finally, we utilize these restriction properties to prove explicit formulas for period integrals of some Calabi-Yau families. The key insight is that the restriction property on function fields can be leveraged to locally interpolate between the algebraic and analytic settings. Using this technique, we lift hypergeometric period formulas from subfamilies to obtain novel explicit formulas for periods of Calabi-Yau double covers of projective spaces and elliptic curves in $\P^2$, expressed in terms of invariant functions on their parameter spaces.
\end{abstract}

\maketitle

\section{Introduction} \label{sec:introduction}

Understanding the space of orbits of representation $V$ of a reductive group $G$ as an algebraic variety is a notoriously subtle task \cite{Mumford1977}. The natural idea of setting $V \sslash G = \mSpec (\C[V]^G)$, or in other words defining the ring of functions on the quotient to agree with the ring of $G$-invariant functions on $V$, leads only to a rough parametrization of the so-called semistable orbits whose closures do not contain the origin, and throws away lots of information about the unstable orbits. Furthermore, the fibers of the canonical morphism of algebraic varieties $V \to V \sslash G$ are only truly orbits of the $G$-action above the stable locus of orbits which are closed and whose points have finite stabilizers. Algebraically complicating the situation is that understanding the invariant rings themselves is generally not straightforward, and highly involved computations may be necessary to determine the degrees of the generators of the invariant rings, let alone the explicit forms of these generators themselves. For example, the invariant ring of the ternary cubic is generated as a $\C$-algebra by the Aronhold invariants $S$ and $T$ whose explicit forms together occupy about a page and a half in \cite{sturmfels}, and the invariant ring of the ternary quartic is generated by thirteen elements, seven of which are algebraically independent, and which have degrees ranging from 3 to 27 \cite{DIXMIER1987279,shioda-ternary-quartic}.

On the other hand, if one instead takes a birational perspective, there is always a geometric quotient. Namely, Rosenlicht showed that for any action of an algebraic group $G$ on a variety $X$, there is a variety $Z$ whose function field is the field $\C(X)^G$ of $G$-invariant rational functions, and such that the induced dominant rational map $\tau : X \dashto Z$ has $G$-orbits as its fibers \cite{rosenlicht}. Roughly, Rosenlicht's theorem says that the invariant field is a finer algebraic object for parametrizing $G$-orbits, up to a Zariski-small subset of such orbits; the algebraic cost you pay is that the invariant field $\C(X)^G$ contains the invariant ring $\C[X]^G$ and perhaps other rational functions depending on the character theory of $G$. Rosenlicht also proved that whenever $G$ is a solvable algebraic group, there is a rational map $\sigma : Z \dashto X$ so that $\tau \circ \sigma$ is the identity; in other words, the image of $\sigma$ meets most orbits exactly once. Since most reductive groups occurring in nature are not solvable, this fact is not particularly suited for our case.

When $G$ is reductive, however, Luna gave a similar result with the tradeoff that the stabilizers of points enter the picture. This is known as Luna's slice theorem. Given a point $x$ of an affine $G$-variety $X$ with closed stabilizer $\stab(x)$, he showed that there is an affine subvariety $S$ containing $x$ which is stable under $\stab(x)$, a Zariski open set $U \subseteq V$, and a natural \'etale map $S \times_{\stab(x)} G \to U$. This result has the same spirit as Palais's slice theorem in the smooth category, which says that for any point $x$ in a smooth $G$-space $M$, there is a submanifold $S$ containing $x$ for which $M$ is locally diffeomorphic to $S \times (G / \stab(x))$ near $x$ \cite{palais-slice}.

Many results in invariant theory have been motivated essentially by the idea of finding a global slice for the $G$-action, or something sufficiently close to it. (In fact, Luna's slice theorem plays a crucial role in the proofs of some of the results described below.) In practice, the idea has historically been to find a vector subspace $Y \subset V$ and a finite group $W$ acting linearly on $Y$ such that the restriction of polynomials $\res_{V \to Y}$ sends $\C[V]^G$ isomorphically to $\C[Y]^W$; then this reduces the question of determining the invariant ring under the reductive group action to the simpler question of determining the invariant ring under a finite group action. In this article, we will call a statement which guarantees the existence of such a $Y$ and $W$ a \textit{restriction theorem}.

The first, and probably best-known, example of such a theorem is Chevalley's restriction theorem \cite{humphreys}. It deals with the specific case of the adjoint representation on a semisimple Lie algebra. Namely, let $G$ be a connected semisimple Lie group, $\g$ its Lie algebra, $\h$ a Cartan subalgebra, and $W$ its Weyl group, and let $G$ act on $\g$ by the adjoint action and $W$ on $\h$ in the usual way. Then the Chevalley restriction theorem states that $\res_{\g \to \h} : \C[\g]^G \to \C[\h]^W$ is an isomorphism of invariant rings.

For instance, take $G = \SL_2(\C)$. In this case $\h$ is one-dimensional and $W = \Z / 2\Z$ acts by reflection on $\h$. Denoting by $z_1, z_2, z_3$ the coordinate functions on $\g$ with respect to the usual basis $H, X, Y$ for $\sl_2(\C)$ and taking $\h = \vspan_\C(H)$, we have $\C[\h]^W = \C[z_1^2]$ and therefore $\C[\g]^G$ is generated by a single polynomial which restricts to $z_1^2$ on $\h$. By manual computations involving the $G$-action on $\g$, one can find that $\C[\g]^G$ is generated by $z_1^2 + z_2 z_3$, which is Killing dual to the Casimir element of $\sl_2(\C)$ and is readily verified to be $G$-invariant.

Chevalley's theorem caught the eye of many invariant theorists, who subsequently worked to generalize it to the context of reductive group actions. One such program was the study of polar representations by Dadok and Kac \cite{Dadok1985, DADOK1985504}. In their orbit-theoretic approach, they studied vector subspaces of a reductive group representation $V$ of the form
\[\fc_v = \{x \in V \st \g \cdot x \subseteq \g \cdot v\}\]
whose $G$-orbit through each point has a tangent space parallel to the $G$-orbit through a given $v \in V$. Dadok and Kac proved that when $V$ is polar, meaning that some such subspace has the same dimension as $V \sslash G$, it satisfies a restriction theorem. In this case $\fc_v$ is called a Cartan subspace of $V$ and there is a finite group $W$, called the Weyl group, which is defined via a normalizer-mod-stabilizer construction inside $G$ and acts on $\fc_v$, for which $\res_{V \to \fc_v} : \C[V]^G \to \C[\fc_v]^W$ is an isomorphism.

All of the adjoint representations addressed by the Chevalley restriction theorem are polar, and Dadok and Kac further enumerated all polar representations of simple reductive groups. A simple example would be the action of $G = \SL_2(\C)$ on $V = M_{2 \times 2}(\C)$, the space of $2 \times 2$ complex matrices. The subspace
\[\fc_{\Id} = \left\{ \begin{bmatrix}
    x & 0 \\
    0 & x
\end{bmatrix} {\ \Bigg |\ } x \in \C \right\}\]
is thus a Cartan subspace, and the Weyl group $W = \Z / 2 \Z$ acts on $\fc_{\Id}$ by inversion. It is well-known that in this case $\C[V]^G = \C[\Delta]$, where $\Delta$ denotes the determinant, and one can easily check that its restriction to $\fc_{\Id}$, $x^2$, generates $\C[\fc_{\Id}]^W$. We will see that the theory of the present article gives a new interpretation of Dadok and Kac's Weyl group as a Galois group of a suitable field extension.

Another restriction theorem generalizing the work of Chevalley is due to Luna and Richardson \cite{luna-richardson}. They showed that in fact any representation $V$ of a reductive group $G$ satisfies a weak kind of restriction theorem. Namely, for generic $x \in V$, there is a group $W$ defined by a normalizer-mod-stabilizer construction in $G$ which acts on the fixed point set $F = V^{\stab x}$ under the action of the stabilizer of $x$. In this case, $\res_{V \to F} : \C[V]^G \to \C[F]^W$ is an isomorphism.

The keen reader will notice that it's not guaranteed that $W$ is finite; even in simple examples, it is not uncommon for $W$ to be infinite if stabilizers of points in $V$ are generically small. The case of $\SL_2(\C)$ acting on $M_{2 \times 2}(\C)$ is an extreme example: any invertible matrix has trivial stabilizer, and so the output of the Luna-Richardson theorem is $F = M_{2 \times 2}(\C)$ and $W = \SL_2(\C)$. However, when the group $W$ is finite, the theorem can be used to deduce information on the invariant ring $\C[V]^G$, and Luna and Richardson used this technique to compute the degrees of the invariant polynomials of some high-dimensional representations of $\SL_7(\C)$, $\SL_8(\C)$, and $\operatorname{Spin}_{13}(\C)$.

Despite the obvious power of these restriction theorems, some important questions still remain unanswered. For one, taking a geometric rather than representation-theoretic point of view, there is no obvious reason to require the target subvariety of a restriction theorem to be a vector subspace (or even that the ambient $V$ must be a vector space). Furthermore, in each of the three restriction theorems above, the construction of the Weyl group is somewhat ad-hoc, and sorely lacks a satisfying uniform description. Our goals in this article are to clarify these points, and in particular we will explain that (1) these restriction theorems have a natural extension to arbitrary subvarieties of $V$, not just subspaces; (2) the notion of restriction on function fields is a more natural condition to study from an algebraic perspective, especially through the lens of Galois theory, which plays a crucial role in restriction phenomena and can be used to universally define the Weyl group; and (3) that these insights are of profound and previously unnoticed utility in mirror symmetry and the study of period integrals of Calabi-Yau families. The purpose of this article is to explain a new approach to understanding restriction theorems which is rooted in Galois theory and to showcase novel insights to period integrals which follow from it.

Periods of Calabi-Yau (CY) families are a central object of study in mirror symmetry, as the local Torelli theorem \cite{local-torelli-complete-intersections,local-torelli-cyclic-covers} essentially implies that they dictate the ``B-model" information in the mirror correspondence. As a result, periods are known to encode enumerative \cite{CANDELAS199121, cox-katz} and arithmetic \cite{Lian1996, doran1998picardfuchsuniformizationmodularitymirror, Yui2013} information about CY families. While periods of many CY families are known to be governed by GKZ generalized hypergeometric systems \cite{batyrev-vmhs,Hosono1996,hosono2019k3,HLY2019}, the best-understood examples occur in non-canonically chosen gauge-fixed subfamilies of larger universal families. An illustrative example is to contrast the Weierstrass family of elliptic curves $y^2 = x^3 + ax^2 + bx + c$, for which the period $\int_\gamma \frac{dx}{y}$ is classically and explicitly known to be closely related to the Gauss hypergeometric function $\leftindex_2{F}_1$, with the cubic family of elliptic curves given as hypersurfaces in $\P^2$, for which there is also an explicit formula thanks to the theory of GKZ hypergeometric equations \cite{Gelfand1989} and their general solutions \cite{Hosono1996}.

In order to explicitly understand the periods of larger, universal families of CY varieties such as the full cubic family in $\P^2$, one needs a procedure to lift the periods from a low-dimensional subvariety to the entire moduli space. A first step in this direction was taken by Lian and Yau \cite{Lian_2012}, who derived a Poincar\'e residue method to canonically normalize the volume forms on Calabi-Yau complete intersections in a fixed compact manifold $X$. From this canonical normalization, they constructed certain $\D$-modules, called tautological systems, specifically tailored to govern the periods. Whether or not the period sheaf exactly coincides with the solution sheaf of the tautological system depends on the cohomology of $X$ and a choice of extra equivariance data baked into the tautological system, but is known to be the case in all examples we will consider in this article \cite{BHLSY-holonomic-rank-problem,HLZ-periods-riemann-hilbert-correspondence,LLY-calabi-yau-fractional-complete-intersections}. Our key insight on periods in this article is that this extra equivariance data can be leveraged using invariant theory to lift hypergeometric period formulas from low-dimensional subfamilies to the entire moduli space.

\subsection{Structure of the article}

The structure of this article is as follows. In Section \ref{sec:two-restriction-properties}, we will define the Chevalley restriction property, which captures the essence of previous restriction theorems, and the Galois restriction property, which generalizes the former to the level of function fields. These properties will be the main focus of this article. 

In Section \ref{sec:parametrizing-extension-subvarieties}, we focus on the case of representations whose $G$-invariant rational functions are fractions of $G$-invariant polynomials. Such representations are sometimes called visible (see \cite{Renner2012}), though this adjective also has other common meanings. In this setting, we construct a scheme $\M$ over a large algebraically closed field $K$ whose closed points parametrize so-called extension subvarieties, which are candidates for the Galois and Chevalley restriction properties. A key point is that the construction canonically embeds the coordinate rings and function fields of all extension subvarieties into $K$, which allows us to phrase the relationship between the Galois and Chevalley restriction properties in terms of an intersection condition on subrings of $K$.

Then, in Section \ref{sec:CRP-algebraic}, we continue under the assumptions of the previous section and push the question of the relationship between the Chevalley and Galois restriction properties. Motivated by the intersection condition in the previous section, we define the positive closure of an integral domain $R$, a subset of $\bar{\Frac R}$ which is closely intertwined with the Chevalley restriction property. Under suitable conditions on $R$, we interpret the positive closure geometrically as a weak kind of surjectivity condition, which leads to the following equivalent characterizations of the Chevalley restriction property.
\begin{theorem*}[= Theorem \ref{thm:CRP-equivalent-conditions}]
    Under suitable conditions on $G$ and $V$, let $Y$ be a subvariety with the Galois restriction property. Then the following are equivalent.
    \begin{enumerate}
        \item $(Y, W)$ has the Chevalley restriction property.
        \item $\C[Y] \subset \PC(\C[V]^G)$.
        \item The canonical morphism $Y \to V \sslash G$ is surjective up to codimension $2$.
        \item The canonical morphism $Y \to V \sslash G$ is surjective.
    \end{enumerate}
\end{theorem*}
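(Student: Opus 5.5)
We prove the cycle of implications (1)$\Rightarrow$(4)$\Rightarrow$(3)$\Rightarrow$(2)$\Rightarrow$(1). Throughout, the Galois restriction property identifies $\C(V)^G$ with $\C(Y)^W$ via restriction, and in the visible setting $\C(V)^G=\Frac(\C[V]^G)$. Write $R=\C[V]^G$, embedded in $\C[Y]$ by restriction. Then $\C[Y]$ is integral over $\C[Y]^W$ because $W$ is finite, and $\Frac(\C[Y])$ is Galois over $\Frac R$ with group $W$.

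For (1)$\Rightarrow$(4), assume $R\cong\C[Y]^W$. Then the morphism $Y\to V\sslash G=\mSpec R$ factors as $Y\to Y/W\xrightarrow{\sim} V\sslash G$. The quotient map by a finite group is surjective, since $\C[Y]$ is integral over $\C[Y]^W$ and lying over applies. Hence $Y\to V\sslash G$ is surjective. The implication (4)$\Rightarrow$(3) is trivial.

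For (3)$\Rightarrow$(2), we use the geometric interpretation of the positive closure developed earlier in Section~\ref{sec:CRP-algebraic}. Under the standing hypotheses (normality of $R$, and finiteness of $Y\to V\sslash G$ after normalization), $\PC(R)$ consists of the elements of $\bar{\Frac R}$ whose "poles" over $\mSpec R$ are avoided by a map that is surjective up to codimension $2$. Concretely, take $f\in\C[Y]$. Its minimal polynomial over $\Frac R$ has coefficients given by the elementary symmetric functions in the $W$-conjugates of $f$. These coefficients are regular on the image of $Y$. If that image misses only a set of codimension at least $2$, then by Hartogs (using normality of $R$) the coefficients lie in $R$. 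This places $f$ in $\PC(R)$. The step requires checking that the definition of $\PC$ matches "integral with coefficients regular in codimension one", and I would invoke the earlier characterization lemma directly.

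For (2)$\Rightarrow$(1), we must show $\C[Y]^W\subseteq R$; the reverse inclusion is immediate from $G$-invariance restricting to $W$-invariance. Take $f\in\C[Y]^W$. Then $f\in\C(Y)^W=\Frac R$. By (2), $f\in\PC(R)$, so $f$ is integral over $R$ or satisfies the positivity condition defining $\PC$. Since $R$ is integrally closed, being an invariant ring of a normal variety, an element of $\Frac R\cap\PC(R)$ lies in $R$. This is the intersection condition from Section~\ref{sec:parametrizing-extension-subvarieties}, realized inside $K$. Injectivity of restriction follows from $Y$ meeting generic orbits, which is part of the Galois restriction property.

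The main obstacle is (3)$\Rightarrow$(2), which converts a geometric surjectivity statement into algebraic membership. I would first try to reduce it to the height-one primes of $R$. For each such prime $\mathfrak p$, surjectivity in codimension $2$ gives a prime of $\C[Y]$ lying over $\mathfrak p$. It then follows that $\C[Y]_{\mathfrak p}$ is integral over $R_{\mathfrak p}$, a DVR, and intersecting over all such $\mathfrak p$ yields the needed membership.
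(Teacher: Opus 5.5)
Your steps (1)$\Rightarrow$(4)$\Rightarrow$(3) coincide with the paper's. The real gap is (3)$\Rightarrow$(2).

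\textbf{The (3)$\Rightarrow$(2) argument proves something false.} Your Hartogs argument would show that every $f\in\C[Y]$ has minimal polynomial over $\Frac R$ with coefficients in $R=\C[V]^G$. That says $\C[Y]$ is integral over $R$, which fails under the Galois restriction property plus (3). Two things go wrong:
\begin{enumerate}
\item Finiteness of $Y\to V\sslash G$, even after normalization, is not among the hypotheses. If it were, (2) would follow from normality of $R$ alone, without using (3).
\item $W$ acts only by birational automorphisms. So the $W$-conjugates of $f$ need not be regular on $Y$, and their symmetric functions need not be regular at points of the image.
\end{enumerate}
Concretely, let $G=\C^\times$ act on $\C^2$ with weights $(1,-1)$, put $t=z_1z_2$, and let $Y=\V(z_1(z_2+1)^2-1)$. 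Then $\C[Y]=\C[w,w^{-1}]$ with $w=z_2+1$, and $t|_Y=(w-1)/w^2$. The extension $\C(Y)/\C(t)$ is quadratic, with $W=\Z/2$ acting by $w\mapsto w/(w-1)$, and $Y\to\A^1$ is surjective. Yet $w$ has minimal polynomial $X^2-t^{-1}X+t^{-1}$, whose coefficients are not regular at $t=0$, which lies in the image. The same example refutes your fallback claim that $\C[Y]_{\mathfrak p}$ is integral over $R_{\mathfrak p}$ (take $\mathfrak p=(t)$).

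\textbf{How to repair it.} The height-one reduction does work if you replace integrality by domination. By (3) and constructibility of the image, each height-one prime $\mathfrak p$ of $R$ equals $\mathfrak P\cap R$ for some prime $\mathfrak P$ of $\C[Y]$. Suppose $g\in\C[Y]\cap\Frac R$ were not in the DVR $R_{\mathfrak p}$. Then $g^{-1}\in\mathfrak pR_{\mathfrak p}\subseteq\mathfrak P\C[Y]_{\mathfrak P}$, so $1=g\,g^{-1}\in\mathfrak P\C[Y]_{\mathfrak P}$, which is absurd. Hence $\C[Y]\cap\Frac R\subseteq\bigcap_{\mathfrak p}R_{\mathfrak p}=R$, which is (2) by Proposition~\ref{prop:PC-intersection-condition}. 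This needs only normality of $R$.

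The paper instead argues elementwise and by contrapositive (Theorem~\ref{thm:PC-surjective-codim-2} and Proposition~\ref{prop:subring-of-positive-closure-iff-surjective-up-to-codimension-2}). It uses factoriality to write some $n/d\in R[s]\setminus R$ in lowest terms, then observes that the map misses $\V(d)\setminus\V(n)$, a dense subset of a divisor.

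\textbf{The (2)$\Rightarrow$(1) step proves less than (1).} First, $\Frac R\cap\PC(R)=R$ is immediate from the definition: for $f\in\Frac R$ we have $R[f]=R[f]\cap\Frac R$. So integral closedness and the ``integral or positive'' dichotomy play no role. What you actually obtain is $\C[Y]\cap\C(Y)^W=R$, exactly what the paper extracts from Propositions~\ref{prop:crp-intersection-condition} and~\ref{prop:PC-intersection-condition}. But Definition~\ref{def:chevalley-restriction-property} also requires $W$ to act by regular automorphisms. The Galois restriction property does not supply this, and your (1)$\Rightarrow$(4), like the paper's, uses it. In the example above, (2)--(4) hold. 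Yet no finite group acting regularly on $Y$ can have invariant ring $\C[t]$, since $\C[Y]$ is not integral over $\C[t]$.

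\textbf{This is a defect of the statement, not only of your argument.} Let $G=(\C^\times)^2$ act on $\C^4$ with weights $e_1,-e_1,e_2,-e_2$. Then $\C[V]^G=\C[x,y]$ and $\C(V)^G=\C(x,y)$, where $x=z_1z_2$ and $y=z_3z_4$. Embed the surface $\{xs^2+ys=1\}\subset\A^3$ into $\C^4$ via $(z_1,z_2,z_3,z_4)=(s,\,x(xs+y),\,y,\,1)$. This is a closed embedding, since $z_1z_2=x$.
\begin{itemize}
\item The resulting closed $Y$ has the Galois restriction property with $W=\Z/2$.
\item Its image in $V\sslash G=\A^2$ is $\A^2\setminus\{0\}$.
\item So (3) holds while (4) fails.
\end{itemize}
With the domination argument for (3)$\Rightarrow$(2), your cycle closes once you add the hypothesis that $W$ acts on $Y$ by regular automorphisms. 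Then $\C[Y]^W=\C[Y]\cap\C(Y)^W$, and (2) gives (1) directly.
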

As an easy corollary of the Theorem, we recover a statement similar to a useful proposition of Gatti and Viniberghi (aliases of Kac and Vinberg), which highlights from an orbit-theoretic point of view why the Chevalley restriction property is a true generalization of previous restriction theorems.

Section \ref{sec:families-over-C} deals with the question of how to understand deformations of extension subvarieties. Using a technique reminiscent of Weil restriction, we construct spaces (more precisely, constructible sets) whose points are in bijection with points of $\M$ with coordinates belonging to a given $\C$-sub-vector space of $K$, and organize the corresponding extension subvarieties as a family over this base. We primarily focus on examples and open questions; this appears to be a ripe setting to investigate the properties of ``generic" extension subvarieties.

Finally, in Section \ref{sec:period-integrals}, we show the relevance of restriction properties to the study of period integrals and B-model moduli for CY families. We begin by proving two short lemmas which interpolate between the algebraic and analytic settings when given a subvariety with the Galois restriction property. In some sense, these lemmas have the flavor of a local slice theorem in the analytic category. We then prove striking explicit formulae for periods of CY double covers of $\P^n$ and elliptic curves in $\P^2$ in terms of invariant functions on their respective parameter spaces. 
\begin{theorem*}[= Theorem \ref{thm:periods-double-covers}]
    The period of the family of CY double covers of $\P^{n-1}$ is given in terms of $\SL_n(\C)$-invariant functions by the power series
    \[\Pi(Z) = \left( \frac{\left( \prod_{i=n+2}^{2n} \Delta_{2 \cdots n i} \right) \left( \prod_{\{j_1, \dots, j_{n-2}\} \subseteq \{2, \dots, n\}} \Delta_{1 j_1 \cdots j_{n-2} (n+1)} \right) }{\left( \Delta_{1 2 \cdots n} \Delta_{2 \cdots n (n+1)} \right)^{n-2}} \right)^{-\frac{1}{2}} \sum_\ell A(\ell) f_{ij}(Z)^\ell,\]
    where $\Delta_{i_1 \cdots i_n}$ are minors of an $n \times 2n$ parameter matrix, the $f_{ij}(Z)$ are explicit rational functions of the $\Delta_{i_1 \cdots i_n}$, $\ell = (\ell_{ij})$ ranges over $\Z_{\geq 0}$-valued multiindices with $2 \leq i \leq n$ and $n+2 \leq j \leq 2n$,
    \[A(\ell) = \frac{\prod_{j=n+2}^{2n} (\frac{1}{2}, \sum_{i=2}^n \ell_{ij}) \prod_{i=2}^n (\frac{1}{2}, \sum_{j=n+2}^{2n} \ell_{ij})}{(\frac{n}{2}, \sum_{i=2}^n \sum_{j=n+2}^{2n} \ell_{ij}) \prod_{i=2}^n \prod_{j=n+2}^{2n} (1, \ell_{ij})},\]
    and $(a, n) = \frac{\Gamma(a+n)}{\Gamma(a)}$. This function extends to a multivalued meromorphic function by analytic continuation.
\end{theorem*}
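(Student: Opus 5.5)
Our strategy is to follow the principle set out in the introduction: a hypergeometric period formula known on a gauge-fixed subfamily is lifted to the whole parameter space by rewriting it in terms of $\SL_n(\C)$-invariant functions. We do this in four steps.

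\textbf{Step 1: choose the slice.} The family is $\{y^2 = \prod_{j=1}^{2n} \ell_j(x)\}$, where the $\ell_j$ are linear forms on $\C^n$ given by the columns of an $n\times 2n$ matrix $Z$. The group $\SL_n(\C)$ acts on the left, and the torus $(\C^\times)^{2n}$ rescales the columns. We take $Y$ to be the set of matrices of the shape $[\,\Id_n \mid A\,]$, normalized so that the first column of $A$ is $(1,\dots,1)^T$ and the first row of $A$ is also normalized. The remaining free entries are then exactly $a_{ij}$ with $2\le i\le n$ and $n+2\le j\le 2n$, matching the multi-index $\ell=(\ell_{ij})$ in the statement.

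\textbf{Step 2: subfamily formula and Galois restriction.} On $Y$ we write the period residue integral $\int \frac{\Omega}{\sqrt{\prod \ell_j}}$. Expanding $\prod_{j>n+1}(1+\sum_i a_{ij}x_i)^{-1/2}$ binomially and integrating term by term over a torus cycle should give the GKZ series $\sum_\ell A(\ell)\, a^\ell$; the Pochhammer symbols $(\tfrac12,\cdot)$, $(\tfrac n2,\cdot)$ and $(1,\ell_{ij})$ arise from these binomial and Beta-type integrals. We then check that $Y$, with the appropriate finite group $W$, has the Galois restriction property for the action of $\SL_n\times$torus. The invariant field is generated by cross-ratio-type quotients of the minors $\Delta_{i_1\cdots i_n}$. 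So it suffices to show that each coordinate $a_{ij}$ on $Y$ equals the restriction of an explicit invariant rational function $f_{ij}(Z)$, a ratio of Plücker minors. Computing such a minor on $[\Id\mid A]$ gives a single entry $a_{ij}$ up to sign, which makes this achievable.

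\textbf{Step 3: lift the period.} We invoke the interpolation lemmas of Section~\ref{sec:period-integrals}, which use the Galois restriction property. For $Z$ near a point of $Y$ there is, analytically locally, some $g\in\SL_n$ and torus element $t$ with $g Z t\in Y$. The period transforms under $(g,t)$ by a character, namely $\det(g)^{\cdot}\prod_j t_j^{-1/2}$; this is the equivariance data built into the tautological system. The normalizing prefactor must therefore be an invariant-theoretic expression with the same weights that restricts to $1$ on $Y$. We verify that the stated quotient of minors has torus weight $-1/2$ in each column and $\SL_n$-degree $0$, and that it equals $1$ on $Y$. Consequently $\Pi(Z)$ is equivariant, restricts correctly to $Y$, and is determined by these two properties.

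\textbf{Step 4: analytic continuation, and the main obstacle.} Multivalued meromorphic continuation follows because $\Pi$ solves the holonomic tautological system, whose solution sheaf is the period sheaf by the cited results. We expect the main obstacle to be the bookkeeping in Steps 2--3: pinning down the exact normalization of $Y$ so that the prefactor in the statement, with its products over $(n-2)$-subsets, is precisely the weight-correcting factor. We will first test the case $n=2$ and $n=3$ by direct minor computation.
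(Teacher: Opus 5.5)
Your overall architecture matches the paper's. You use the same slice $Y$, which meets every orbit of $\SL_n(\C)\times(\C^\times)^{2n}$ on $M^o_{n\times 2n}(\C)$ exactly once, so $W$ is trivial. You use the cross-ratios $f_{ij}$ restricting to the coordinates on $Y$. You use a prefactor of weight $-\frac{1}{2}$ in every column that is constant on $Y$; each of its minors is $\pm 1$ there, so the constant need not be $1$, which is harmless. And you use uniqueness of an equivariant function from its values on $Y$.

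The real difference is how the period on $Y$ is obtained. The paper computes nothing on $Y$. It quotes Matsumoto--Sasaki--Yoshida both for the reduction of $E(n,2n)$ to $E'(n,2n)$ and for $\varphi=\sum_\ell A(\ell)X^\ell$ being the solution of $E'(n,2n)$ holomorphic near $\mathbf{0}$, unique up to scalar. It then applies its uniqueness lemma to $P(Z)\varphi(f_{ij}(Z))$ and the holomorphic period, both annihilated by the symmetry operators. Your plan is to evaluate the integral on $Y$ directly and use the change-of-variables equivariance of the integral itself. That is more self-contained: it bypasses the reduction theorem and the tautological system, and it identifies the cycle and the constant explicitly.

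However, Step 2 fails as written, because the torus cycle gives $0$. In the chart $w_1=1$ the integrand on $Y$ is $(x_2\cdots x_n)^{-1/2}h(x)\,dx_2\wedge\cdots\wedge dx_n$, with $h$ holomorphic and single-valued near $x=0$. Each term $x^{m-1/2}dx_2\wedge\cdots\wedge dx_n=x^{m+1/2}\frac{dx_2}{x_2}\wedge\cdots\wedge\frac{dx_n}{x_n}$ is a nontrivial character times the invariant form on the double cover of $(\C^\times)^{n-1}$, which is itself a torus. Such a form is exact, so every term integrates to zero over any cycle lying over a small torus.

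The Beta-type integrals you allude to come instead from the doubled simplex. Lift $\Sigma=\{u_i=-x_i\ge 0,\ \sum_{i\ge 2}u_i\le 1\}$; its boundary lies on the branch hyperplanes $\ell_2,\dots,\ell_{n+1}$, and it avoids $\ell_{n+2},\dots,\ell_{2n}$ for small $a$. On $\Sigma$ one has $|\sum_i a_{ij}x_i|\le\max|a_{ij}|<1$, so the binomial series converges uniformly. Write $m_i=\sum_j\ell_{ij}$, $M_j=\sum_i\ell_{ij}$, $|\ell|=\sum_{i,j}\ell_{ij}$. Then the Dirichlet integral
\[\int_\Sigma\prod_{i=2}^n u_i^{m_i-\frac{1}{2}}\Bigl(1-\sum_{i} u_i\Bigr)^{-\frac{1}{2}}du=\frac{\Gamma(\frac{1}{2})^n\prod_{i}(\frac{1}{2},m_i)}{\Gamma(\frac{n}{2})\,(\frac{n}{2},|\ell|)}\]
can be combined with $\binom{-1/2}{M_j}=(-1)^{M_j}(\frac{1}{2},M_j)/M_j!$ and the multinomial coefficients. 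The signs $(-1)^{|\ell|}$ coming from $x_i=-u_i$ cancel those of the binomials, so the result is a constant multiple of $\sum_\ell A(\ell)a^\ell$. With this cycle in place of the torus, your argument goes through.
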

\begin{theorem*}[= Theorem \ref{thm:periods-hypersurfaces}]
    The period of the family of elliptic curves in $\P^2$ is given in terms of $\SL_3(\C)$-invariant functions by the power series
    \begin{align*}
        \Pi(Z) &= \sqrt[12]{\frac{729 (T^4 + 10 S^3 T^2 - 2 S^6) \pm 729  T \sqrt{(T^2 - 4 S^3)^3}}{2}} \\
        &\quad \times \sum_{n=0}^\infty \frac{\Gamma\left( n + \frac{1}{2} \right)^2}{\Gamma\left( \frac{1}{2} \right)^2 \Gamma(n + 1)^2} \left( \frac{1 \pm \sqrt{1 + 4 \sqrt[3]{\frac{729J (54 J^2 + 18 J + 1) \pm 729 J \sqrt{4J + 1}}{2}}}}{2} \right)^n
    \end{align*}
    where $S$ and $T$ are the Aronhold invariants and $J$ is the $j$-invariant of the elliptic curve (a rational function of $S$ and $T$). This power series extends to a multivalued meromorphic function by analytic continuation.
\end{theorem*}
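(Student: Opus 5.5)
The strategy is to reduce the period of the full cubic family to the classical Hesse pencil, and then use the Galois restriction property to rewrite everything in terms of $\SL_3(\C)$-invariants. Let $V = \mathrm{Sym}^3(\C^3)^\vee$ be the space of ternary cubics and $G = \SL_3(\C)$. Take $Y \subset V$ to be the Hesse pencil
\[ x^3 + y^3 + z^3 - 3\psi\, xyz, \]
or more precisely a suitable extension subvariety built from it. Up to the $\C^\times$-scaling built into the tautological system, $Y$ meets every generic $G$-orbit, and it carries a finite group $W$ acting on it: the Hessian group modulo its kernel. I would take as given that $(Y,W)$ has the Galois restriction property, meaning $\C(V)^G \to \C(Y)^W$ is an isomorphism. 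This is presumably established in an earlier example via the visibility of $V$ together with $\C[V]^G = \C[S,T]$.

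The first step is to compute the period on the subfamily. On the Hesse pencil, Lian--Yau's canonically normalized period $\Pi(Z) = \int \Omega/Z$ reduces, via its GKZ/tautological system, to a Gauss hypergeometric function. Its expansion is of the form
\[ \sum_n \frac{\Gamma(n+\tfrac12)^2}{\Gamma(\tfrac12)^2\, n!^2}\, \lambda^n, \]
where $\lambda$ is the Legendre-type modular parameter, an algebraic function of $\psi$. Multiplying by the appropriate power of the scaling coordinate gives the degree $-1$ homogeneity the period must have.

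The second step is the interpolation. I invoke the two lemmas of Section \ref{sec:period-integrals}. The first says that near a generic point $Z_0 \in V$ one can analytically choose $g(Z) \in G$ and $c(Z)$ with $Z = c\, g\cdot y(Z)$ and $y(Z) \in Y$. The second says that the period, being $G$-invariant of weight $-1$ under the tautological system's equivariance, satisfies $\Pi(Z) = c^{-1}\Pi(y(Z))$. Because of the Galois restriction property, the coordinates of $y(Z)$ are algebraic over $\C(S,T)$: $\psi$, and hence $\lambda$, are roots of polynomials whose coefficients are the restrictions of invariants. Concretely, $S$ and $T$ restricted to the Hesse pencil are classical polynomials in $\psi$ of degrees $4$ and $6$, and $J$ is a rational function of $\psi^3$. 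Inverting $J(\psi)$ by solving successively a quadratic, a cube root and another quadratic produces the nested radicals in the statement. The prefactor arises as the twelfth root needed to undo the weight, by matching $S$ or $T$ on $Y$ against $c$. The $\pm$ signs record the choice of branch, which is exactly the $W$-ambiguity.

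The third step extends the result globally. Both sides are solutions of the tautological system that agree on an open set, so analytic continuation gives the multivalued meromorphic extension.

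I expect the main obstacle to be the explicit elimination that expresses $\lambda$ and the scaling prefactor through $S$, $T$ and $J$, including getting the branches consistent. I would first verify it symbolically on the pencil, checking $J(\psi)$ and the relation between $\lambda$ and $\psi$, and only then substitute.
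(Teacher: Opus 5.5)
Your overall architecture matches the paper's: restrict to a subfamily with the Galois restriction property, lift along orbits, fix the weight with a prefactor, and conclude by uniqueness. There is nonetheless a genuine gap. The Hesse pencil cannot produce the particular expressions in the statement, and the steps meant to produce them do not work as described.

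The nested radicals are the inversion of the \emph{Legendre} relation $J=\frac{(\lambda^2-\lambda+1)^3}{-27\lambda^2(\lambda-1)^2}$. The paper takes $Y$ to be the Legendre-form cubics $x_1^3-(\lambda+1)x_1^2x_3+\lambda x_1x_3^2-x_2^2x_3$, on which $S|_Y=\lambda^2-\lambda+1$ and $T|_Y=2\lambda^3-3\lambda^2-3\lambda+2$. For $G=\GL_3(\C)$ the extension $\C(\lambda)/\C(J)$ is then Galois of degree six with group $S_3$. Setting $\alpha=\lambda(\lambda-1)$ splits it into the cubic $J=(\alpha+1)^3/(-27\alpha^2)$ and the quadratic $\lambda^2-\lambda=\alpha$, which is exactly where $\sqrt{4J+1}$, the cube root and $\sqrt{1+4\alpha}$ come from.

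On the Hesse pencil, $J$ is a constant multiple of $\psi^3(\psi^3+8)^3/(\psi^3-1)^3$, of degree $12$ with group $A_4$. Inverting it requires a quartic in $\psi^3$ followed by a cube root, not ``a quadratic, a cube root and another quadratic''. Nor is $\lambda$ a rational function of $\psi$: $S_3$ and $A_4$ have no common nontrivial quotient, so $\C(\lambda)\cap\C(\psi)=\C(J)$ and $\lambda$ has degree $6$ over $\C(\psi)$. To reach the stated series you would have to express $\lambda$ through $J$ directly, i.e.\ redo the Legendre computation.

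The other two ingredients are tied to the Legendre locus as well.
\begin{itemize}
\item \textbf{The restricted period.} Near $\psi=\infty$ the canonical residue period of $x^3+y^3+z^3-3\psi xyz$ is, up to a constant, $\psi^{-1}\,{}_2F_1(\tfrac13,\tfrac23;1;\psi^{-3})$. It is related to ${}_2F_1(\tfrac12,\tfrac12;1;\lambda)$ only through an algebraic factor in $\psi$, coming from the $\GL_3(\C)$-transformation to Legendre form. That factor is not a power of a scaling coordinate, and your first step omits it. The paper avoids the issue by a direct Poincar\'e residue computation: on the Legendre locus the residue of the canonical form is $-\tfrac12\,dx_1/\sqrt{x_1(x_1-1)(x_1-\lambda)}$, so the restricted period is, up to a constant, exactly the Legendre double-cover period.
\item \textbf{The prefactor.} It is a root of $(T^2-4S^3)P^{12}+27S^2P^8-54SP^4+27=0$. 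This equation is forced by requiring $P$ to be $\SL_3(\C)$-invariant, homogeneous of degree $-1$, and equal to $1$ on $Y$, given the identity $T^2-4S^3+27(S-1)^2=0$ satisfied by $S|_Y,T|_Y$. The Hesse pencil traces a different curve in the $(S,T)$-plane. Indeed, on the image of the Legendre locus one has $\lambda=\tfrac12\bigl(1+\tfrac{T}{S-3}\bigr)$, so otherwise $\lambda$ would be a rational function of $\psi$. Matching against the Hesse curve therefore yields a different prefactor.
\end{itemize}

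Two smaller points. First, with $G=\SL_3(\C)$ the field $\C(V)^G=\C(S,T)$ has transcendence degree $2$, so it cannot restrict isomorphically onto invariants of a one-parameter pencil; you need $G=\GL_3(\C)$, as in the paper. Second, your final step is really the uniqueness lemma. A priori the two functions agree only on $Y$, and it is the symmetry operators together with the density of $G\cdot Y$ that propagate the agreement.
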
 

The history and the present article can be summarized in the following flowchart. Here, an arrow indicates inspiration or generalization. A solid box or arrow indicates a previous result and a dashed box or arrow indicates an insight introduced in this article.
\begin{figure}[H]
    % https://tikzcd.yichuanshen.de/#N4Igdg9gJgpgziAXAbVABwnAlgFyxMJZABgBpiBdUkANwEMAbAVxiRAB125JZgBJMPQBOWOmBwACEWADmcAL7AxUCQBIA4qoC0EIQCNcCkPNLpMufIRQBGUgGYqtRizadu0GP0F0RYyQDMsGAYoBSUwFQ1tXQMcIxMzbDwCIjJrR3pmVkQOLh5PAGEACxhMhhgAT0UheBwRAGNksAkcEt0YAFt5Y1MQDCTLVNIAJgznbNz3XgAROigIAGstAGk6esUMBh8pGDQauBhxOib43v6LFJQyB2pMlxzOKHzgYtLGcqrgfbqsRssJPYQNAwIQ4Ko9RIXKwkUgAFjGWVc7CeHmA6kYECwYW+DSaAKEQJBYO6CT65iaRGG5AR90mzwACr8fFAtAAxJj1IphGAARyYx0sp0hFJQVPSt3GSKmngAKnQmDgIAwIDImQxFHAKnAcJ0hWSBpdkFT4RLEQ9kc8AIJgRgVPD1CQdOhoNBYWQSZSKBhYfx4d3lDpOvXnEVG0gAVhpE0ezwEwlE4i0rRg7XtGxB+BU-l0HSYW2D5MGNhGUalzwAMkwbVoAEq-IrMuAERTJ9pdCH6qFEWzipxmumo9TLABaGq1Oo6BYN0NskdNtLcFardAA5HAPREJPTGHQsWuNd76jAWm0apOSY4YFAZPAiKB-ASOkgyCBFUgqX2F+wdQAPHBKKAACtMXED0-mbEBqC2PRgnpQtLhAEQZCKHAOwfCAn0QD830QOx52jb8YD-YAcDEG9QLgNA1mPeoCCgKFuigugYIYODpzYJCULQx8kDw18ICQWF8KRX9-3IkFGAkOAmD0eMYDweBGJAb0wAmKA6DgEooEg5TmNg+CrEQrBkNQ0l0MwgA2agcIAdmE81RK+XZ9kOUimiTU95N+JToP09icnKX1uIwpA7P4pAAA4oLdNSNK04LMPDayBMQKLPwIxzGBvPQhF3eotDEW17QkN0dSETYBQg6LVLYdTNKvHTfNYgyOOMrizJ41LkqQABOarYvq7SOpCxArPCxBrFsdKRKI-9aI6AwbUVXx1UavTmv8oyTIS99uoml87gy2bgHE3KGAkeoNJgElenMwS9qS5SYtquKGuGzCppwx6VIG+L3qQaxsJStKfpewbjAoeQgA
\begin{tikzcd}
\snode{Invariant rings}{and $G$-orbits} \arrow[d, "\text{adjoint action}"']                                           &                                                                & \snode{Picard-Fuchs}{equations} \arrow[d, "\text{representation-theoretic}"] \arrow[ld, "\text{combinatorial}"'] \\
\snode{Chevalley}{restriction theorem} \arrow[d, "\text{tangent space condition}"'] \arrow[rd, "\text{general case}"] & \snode{GKZ}{systems}                                           & \snode{Tautological}{systems} \arrow[ddd, dashed]                                                                \\
\snode{Dadok-Kac}{polar representations} \arrow[d, "\text{general subvarieties}"', dashed]                            & \snode{Luna-Richardson}{theorem}                               &                                                                                                                  \\
\dnode{Chevalley}{restriction property} \arrow[d, dashed]                                                             & \snode{Invariant fields}{and $G$-orbits} \arrow[ld, dashed]    &                                                                                                                  \\
\dnode{Galois}{restriction property} \arrow[rr, "\text{algebraic-analytic interpolation}"', dashed]                   &                                                                & \dnode{Analytic mapping and}{lifting lemmas} \arrow[d, dashed]                                                   \\
                                                                                                                      & \snode{Luna's and Palais's}{slice theorems} \arrow[ru, dashed] & \dnode{Invariant-theoretic}{period formulas}                                                                    
\end{tikzcd}
\end{figure}
Although we hope that the reader will be convinced that invariant theory and period integrals have much to offer each other, we see this article as just the beginning of the story. Throughout the text, we provide numerous examples and point out questions for future study. We hope that this will alert others to the many interesting questions waiting to be answered in this new area.

\subsection{Notations and conventions.} 
In this article, all reductive groups are algebraic groups over $\C$ and all their representations are $\C$-vector spaces. If $X$ is a variety over a field $k$, we use $k[X]$ to denote the coordinate ring of $X$ and $\hat{X} = \Spec k[X]$ to mean the scheme-theoretic analogue of $X$. We use $\C[V]^G$ to denote the subring of $\C[V]$ comprising $G$-invariant functions, and $\C(V)$ and $\C(V)^G $ to mean the function field of $V$ and its subfield of $G$-invariant functions. For $f \in k[X]$, we use $\V(f)$ to mean the vanishing locus of $f$ in the variety $X$. In Sections \ref{sec:parametrizing-extension-subvarieties} and \ref{sec:CRP-algebraic} only, we take $V$ to be a representation satisfying $\C(V)^G = \Frac (\C[V]^G)$. $\Frac R$ denotes the field of fractions of an integral domain $R$. We use the adjective ``factorial" to mean that $R$ is a UFD. Finally, if $R$ is a reduced finitely-generated $k$-algebra, we use $\mSpec R$ to mean the variety of closed points of $\Spec R$. 

\section{Two restriction properties} \label{sec:two-restriction-properties}

In this section, we will push the results of Chevalley, Dadok-Kac, and Luna-Richardson further by defining two restriction properties that generalize the historical results outlined in Section \ref{sec:introduction}. Given a representation $V$ of a reductive group $G$, we are interested in restriction properties where $G$-invariants on $V$ restrict to invariant functions on a subvariety $Y$ under the action of a finite group $W$. Our first definition, on the level of invariant rings, aligns closely with the previous results in Section \ref{sec:introduction}.

\begin{definition} \label{def:chevalley-restriction-property}
    Let $Y \subset V$ be a closed subvariety and $W$ a finite group acting faithfully on $Y$ by regular automorphisms. We say that $(Y, W)$ has the Chevalley restriction property if $\res_{V \to Y} : \C[V]^G \to \C[Y]^W$ is an isomorphism.
\end{definition}

In this language, the Chevalley restriction theorem says that any Cartan subalgebra of $\g$ has the Chevalley restriction property, and the Dadok-Kac theorem says that any Cartan subspace of a polar representation has the Chevalley restriction property. However, there are plenty of cases not covered by these two theorems. In the example of $\SL_2(\C)$ acting on $M_{2 \times 2}(\C)$, the subvariety $Y_1 = \{z_{12} = z_{21} = 0, z_{22} = 1\}$ is not a subspace, but has the Chevalley restriction property with $W_1$ the trivial group. The subvariety $Y_2 = \{z_{12} = z_{21} = 0, z_{11} = z_{22}^2\}$ is not even a degree-one subvariety, but has the Chevalley restriction property under the action of $W_2 = \Z / 3\Z$ by $(z_{11}, z_{22}) \mapsto (\zeta^2 z_{11}, \zeta z_{22})$, where $\zeta$ is a third root of unity.

As we will see later, restriction theorems on function fields play an important role in the study of period integrals, motivating our second definition.

\begin{definition} \label{def:galois-restriction-property}
    Let $Y \subset V$ be a closed subvariety and $W$ a finite group acting faithfully on $Y$ by birational automorphisms. We say that $(Y, W)$ has the Galois restriction property if $\res_{V \to Y} : \C(V)^G \to \C(Y)^W$ is an isomorphism.
\end{definition}

Note that in the setting of this definition, the extension $\C(Y) / \C(Y)^W$ is automatically Galois, which justifies the terminology. A question which will occupy much of our attention is the precise relationship between these two restriction properties; the following is a first result in this direction.

\begin{proposition} \label{prop:chevalley-implies-galois}
    Suppose that $V$ is a representation satisfying $\C(V)^G = \Frac(\C[V]^G)$. If $(Y, W)$ has the Chevalley restriction property, then it has the Galois restriction property.
\end{proposition}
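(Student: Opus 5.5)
The plan is to extend the ring isomorphism $\res_{V \to Y} : \C[V]^G \to \C[Y]^W$ to fraction fields, and then identify both sides with the invariant fields. The hypothesis $\C(V)^G = \Frac(\C[V]^G)$ handles the source. For the target I will prove the standard fact that $\C(Y)^W = \Frac(\C[Y]^W)$ for a finite group $W$ acting by regular automorphisms on the affine variety $Y$. Throughout, $Y$ is taken to be irreducible, as the word "subvariety" indicates here, so that $\C(Y) = \Frac \C[Y]$ makes sense.

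\emph{Step 1: restriction is well defined on $\C(V)^G$.} Every element of $\C(V)^G$ can be written as $f/g$ with $f, g \in \C[V]^G$ and $g \neq 0$. The Chevalley restriction property says that $\res_{V \to Y}$ is injective on $\C[V]^G$, so $g|_Y \neq 0$ in $\C[Y]$. Hence $f|_Y / g|_Y$ is a well-defined element of $\C(Y)$, and it is $W$-invariant because $f|_Y$ and $g|_Y$ lie in $\C[Y]^W$. Independence of the chosen representative follows from multiplicativity of restriction: if $f g' = f' g$ on $V$, then the same identity holds on $Y$. The resulting map $\C(V)^G \to \C(Y)^W$ is exactly the map induced on fraction fields by the ring isomorphism, so it is an injective field homomorphism whose image is $\Frac(\C[Y]^W)$, viewed inside $\C(Y)$. (It is also the literal restriction of rational functions, since $f/g$ is regular on the dense open subset $Y \setminus \V(g|_Y)$ of $Y$.)

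\emph{Step 2: $\C(Y)^W = \Frac(\C[Y]^W)$.} The inclusion $\supseteq$ is clear. For $\subseteq$, let $h \in \C(Y)^W$ and write $h = a/b$ with $a, b \in \C[Y]$ and $b \neq 0$. Set $N(b) = \prod_{w \in W} w \cdot b$. This is a nonzero element of $\C[Y]^W$: it is nonzero because each $w$ acts by a ring automorphism and $\C[Y]$ is a domain, and it is regular because $W$ acts by regular automorphisms. Then
\[h \, N(b) = a \prod_{w \neq 1} w \cdot b \in \C[Y],\]
and this element is $W$-invariant, being a product of two invariants. So $h = (h N(b)) / N(b) \in \Frac(\C[Y]^W)$.

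Combining the two steps, the restriction map is an isomorphism
\[\C(V)^G = \Frac(\C[V]^G) \xrightarrow{\ \sim\ } \Frac(\C[Y]^W) = \C(Y)^W.\]
The action of $W$ is faithful by regular, hence birational, automorphisms, so $(Y, W)$ has the Galois restriction property.

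The only step requiring an actual idea is Step 2, and the norm trick settles it. The point needing care in Step 1 is that denominators do not vanish identically on $Y$; this is exactly where injectivity of the Chevalley restriction map is used.
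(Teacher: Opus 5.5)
Your proposal is correct and follows the paper's proof: you extend the ring isomorphism $\C[V]^G \to \C[Y]^W$ to fraction fields using the hypothesis $\C(V)^G = \Frac(\C[V]^G)$, then get surjectivity from $\C(Y)^W = \Frac(\C[Y]^W)$ for finite $W$. Your norm argument $N(b) = \prod_{w \in W} w \cdot b$ supplies a proof of that identity, which the paper simply asserts ``since $W$ is a finite group,'' and your Step~1 makes explicit that the induced map agrees with literal restriction of rational functions.
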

\begin{proof}
    Because $\res_{V \to Y} : \C[V]^G \to \C[Y]^W$ is an isomorphism, we have an injective map $\C[V]^G \to \C(Y)^W$ by embedding $\C[Y]^W$ into $\C(Y)^W$. This induces a homomorphism of fields $\res_{V \to Y} : \C(V)^G \to \C(Y)^W$ by the universal property of fraction fields. Since $W$ is a finite group, $\C(Y)^W = \Frac \C[Y]^W$, so surjectivity of the map of invariant rings implies that $\res_{V \to Y} : \C(V)^G \to \C(Y)^W$ is surjective, thus an isomorphism.
\end{proof}

Again in the example of $\SL_2(\C)$ acting on $M_{2 \times 2}(\C)$, now consider the subvariety $Y_3 = \{z_{12} = z_{21} = 0, z_{11} z_{22}^2 = 1\}$. Here, $\C[Y_3] \cong \frac{\C[z_{11}, z_{22}]}{(z_{11} z_{22}^2 - 1)} \cong \C[z_{22}, z_{22}^{-1}]$ and $\C(Y_3) = \Frac \C[Y] \cong \C(z_{22})$. Taking $W_3$ to be the trivial group, the restriction of the determinant $\Delta$ which generates $\C(V)^G$ to $\C(Y_3)^{W_3} = \C(Y_3)$ is the generator $z_{22}^{-1}$. However, on the level of invariant rings, $\res_{V \to Y} : \C[V]^G \to \C[Y]^{W_3}$ is not an isomorphism: it is not surjective because its image does not contain any positive powers of $z_{22}$. Therefore this is a simple example of a subvariety which has the Galois restriction property, but not the Chevalley restriction property.

\section{Parametrizing extension subvarieties} \label{sec:parametrizing-extension-subvarieties}

In this section, we will take $V$ to be a representation of a reductive group $G$ with the property that $\C(V)^G = \Frac(\C[V]^G)$. This condition is automatic in the case where $G$ is connected and semisimple. We set $\hat{V} = \Spec \C[V]$ and $K = \bar{\C(V)^G}$. We denote $\hat{V}_K = \hat{V} \times_{\Spec \C} \Spec K$ the base change of $\hat{V}$ to $K$, and $\pi : \hat{V}_K \to \hat{V}$ the induced map.

Based on Proposition \ref{prop:chevalley-implies-galois}, it seems plausible that it should be possible to upgrade the Galois restriction property to the Chevalley restriction property under certain conditions. We will see that in order to understand the interplay between these two restriction properties, we need to canonically embed $\C[V]^G$ and $\C[Y]$ in a common ambient ring. This motivates the following definition.

\begin{definition}
    The extension parameter scheme of $V$ is the closed subscheme
    \[\M = \hat{V} \times_{\hat{V \sslash G}} \Spec K = \Spec (\C[V] \otimes_{\C[V]^G} K).\]
    of $\hat{V}_K$. We use $\iota : \M \to \hat{V}_K$ to denote the closed embedding.
\end{definition}

$\M$ is nonempty because $\C[V]$ and $K$ are nontrivial torsion-free $\C[V]^G$-modules. In coordinates, given a generating set $\{P_1, \dots, P_m\}$ of $\C[V]^G$, $\M$ has the explicit description
\[\M = \Spec \left( \frac{\C[V] \otimes_\C K}{(P_1 \otimes 1 - 1 \otimes P_1, \dots, P_m \otimes 1 - 1 \otimes P_m)} \right).\]

\begin{example}
    Let $G = \SL_2(\C)$ act on $V = M_{2 \times 2}(\C)$ in the usual way. Then $\C[V]^G = \C[\Delta]$ where $\Delta$ denotes the determinant, and taking $K = \bar{\C(\Delta)}$, we get $\M = \Spec \frac{K[z_{11}, z_{21}, z_{12}, z_{22}]}{z_{11} z_{22} - z_{12} z_{21} - \Delta}$, a three-dimensional closed subscheme of $\A_K^4$.
\end{example}

\begin{remark}
    In the case where $\C(V)^G \neq \Frac(\C[V]^G)$, a similar construction of $\M$ can be made with the trade-off that $\M$ will only be a locally closed rather than closed subscheme. For example, consider the scaling action of $G = \C^\times$ on $V = \C^2$. This representation has $\C[V]^G = \C$ but $\C(V)^G = \C(\frac{x}{y})$, so we set $K = \bar{\C(T)}$ where $T$ denotes the generator $\frac{x}{y}$ of $\C(V)^G$. In this case, the ``right" extension parameter scheme is $\M = \Spec \frac{K[x, y, x^{-1}, y^{-1}]}{(x - T y)}$, i.e. the set of closed points is $\{x = Ty\} \setminus \{(0, 0)\} \subset K^2$.
\end{remark}

For the rest of this section, let $p$ be a closed point of $\M$.
\begin{definition}
    The extension subscheme $\hat{Y}_p$ associated to $p$ is the closed irreducible subscheme of $\hat{V}$ defined by the scheme-theoretic image of $\pi \circ \iota : \{p\} \to \hat{V}$. In other words, $\hat{Y}_p = \bar{\{\pi(\iota(p))\}} = \Spec \left( \frac{\C[V]}{\m \cap \C[V]} \right)$ where $\m \subset \C[V] \otimes_\C K$ is the maximal ideal representing $p$. Because $\m \cap \C[V]$ is a prime ideal, $\C[Y_p] = \frac{\C[V]}{\m \cap \C[V]}$ is the coordinate ring of the variety of closed points of $\hat{Y}_p$, which we call the extension subvariety associated to $p$ and denote by $Y_p$.
\end{definition}

The extension parameter scheme has two main uses: to parametrize extension subvarieties of $V$ in geometric terms, and to canonically embed their coordinate rings into $K$. In Section \ref{sec:families-over-C}, the canonical embedding of coordinate rings into $K$ will be used again to understand deformations of extension subvarieties.

\begin{proposition} \label{prop:coord-ring-embed-in-K}
    $\C[Y_p]$ is canonically isomorphic to the image of the composition
    \[\C[V] \to \C[V] \otimes_\C K \to \frac{\C[V] \otimes_\C K}{\m} \cong K\]
    where the right-hand isomorphism is the unique such isomorphism over $K$. The image of this map is a subring of $K$ containing $\C[V]^G$. The function field $\C(Y_p)$ of $Y_p$ is canonically isomorphic to a subfield of $K$ which is a finite extension of $\C(V)^G$.
\end{proposition}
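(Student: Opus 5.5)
The plan is to reduce everything to the residue field computation at the closed point $p$. The first step is to identify $\frac{\C[V]\otimes_\C K}{\m}$ with $K$. Since $\C[V]\otimes_\C K$ is a finitely generated $K$-algebra and $K$ is algebraically closed, the Nullstellensatz shows that the residue field at the maximal ideal $\m$ is a finite, hence trivial, extension of $K$. So the structure map $K \to \frac{\C[V]\otimes_\C K}{\m}$ is an isomorphism, and its inverse is the unique $K$-algebra isomorphism. Call the composite $\phi_p : \C[V] \to K$.

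Next I will compute the kernel of $\phi_p$. An element $f \in \C[V]$ maps to zero exactly when $f\otimes 1 \in \m$, that is, when $f \in \m\cap\C[V]$, where we view $\C[V]\subset \C[V]\otimes_\C K$ via $f\mapsto f\otimes 1$. Hence $\phi_p$ induces an injection
\[\C[Y_p] = \frac{\C[V]}{\m\cap\C[V]} \hookrightarrow K,\]
whose image is $\phi_p(\C[V])$. This is canonical, because every map involved is canonical once the $K$-isomorphism of residue fields is fixed, and that isomorphism is unique.

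For the containment of $\C[V]^G$, I will use the fact that $\m$ is a maximal ideal of $\C[V]\otimes_\C K$ that comes from $\M$. Equivalently, $\m$ contains the ideal $(P_i\otimes 1 - 1\otimes P_i)$, since $\M$ is the closed subscheme cut out by it and $p$ is a closed point of $\M$. Therefore, for each invariant $P\in\C[V]^G$ (and in particular for each generator $P_i$), $P\otimes 1 \equiv 1\otimes P$ modulo $\m$, so $\phi_p(P) = P$, where $P$ is viewed as an element of $\C(V)^G \subset K$. Thus $\phi_p$ restricts to the identity on $\C[V]^G$, and the image contains $\C[V]^G$. As a consequence, $\C[V]^G\cap(\m\cap\C[V]) = 0$.

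Finally, I will handle the function field. Since $\C[Y_p]$ is isomorphic to a subring of the field $K$, the fraction field $\C(Y_p)$ is isomorphic to the subfield $\Frac(\phi_p(\C[V]))$ of $K$. This subfield contains $\Frac(\C[V]^G) = \C(V)^G$, using the standing hypothesis of this section. It is generated over $\C(V)^G$ by the images $\phi_p(z_1),\dots,\phi_p(z_N)$ of the coordinate functions. Each of these lies in $K = \bar{\C(V)^G}$ and so is algebraic over $\C(V)^G$. A field generated by finitely many algebraic elements is a finite extension, which finishes the argument.

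The only step requiring care is the first, the uniqueness and existence of the isomorphism to $K$, which is the Nullstellensatz for algebraically closed $K$. I expect no real obstacle. One should also note that the canonical inclusion $\C(V)^G\subset K$ is compatible with $\phi_p$, which is exactly what the $\M$ relations guarantee.
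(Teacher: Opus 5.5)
Your proposal is correct and follows essentially the same route as the paper: identify the kernel of the composite with $\m \cap \C[V]$, use $P \otimes 1 - 1 \otimes P \in \m$ to see that $\C[V]^G$ maps identically into $K$, and conclude that $\C(Y_p)/\C(V)^G$ is finite because it is generated by finitely many elements algebraic over $\C(V)^G$. The only addition is that you justify the residue-field isomorphism $\frac{\C[V]\otimes_\C K}{\m}\cong K$ and its uniqueness via the Nullstellensatz, which the paper takes for granted.
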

\begin{proof}
    The kernel of the composition $\C[V] \to \C[V] \otimes_\C K \to \frac{\C[V] \otimes_\C K}{\m}$ is exactly $\m \cap \C[V]$, proving the first statement. For the second, by definition of $\M$, it is clear that $P \otimes 1 - 1 \otimes P \in \m$ for each $P \in \C[V]^G$, and therefore the image of $P$ in $K$ under this composition is $P$. The third statement follows because the canonical inclusion of $\C[Y_p]$ into $K$ induces a canonical inclusion of $\C(Y_p)$ into $K$ by the universal property of fraction fields. Because $K = \bar{\C(V)^G}$, it is immediate that $\C(Y_p)$ is an algebraic extension of $\C(V)^G$. In particular, $\C(Y_p)$ is a finitely-generated algebraic extension of $\C(V)^G$ (for it is generated over $\C(V)^G$ by the image of a generating set of $\C[V]$ under the homomorphism $\C[V] \to \C[Y_p]$), and therefore it is a finite extension.
\end{proof}

The proof of Proposition \ref{prop:coord-ring-embed-in-K} shows that the restriction maps $\res_{V \to Y_p}$ sending $\C[V]^G \to \C[Y_p]$ and $\C(V)^G$ to $\C(Y_p)$ are well-defined and injective: they are precisely the canonical inclusions of $\C[V]^G$ into $\C[Y_p]$ and $\C(V)^G$ into $\C(Y_p)$ described in the proposition. This allows us to show that the extension subvarieties are good candidates for restriction properties.

\begin{proposition} \label{prop:M-and-subvarieties-correspondence}
	The extension subvariety associated to each closed point in $\M$ is a closed subvariety $Y \subset V$ so that $\res_{V \to Y} : \C[V]^G \to \C[Y]$ is injective, and $\C[Y]$ is an algebraic ring extension of $\C[V]^G$. Conversely, every such subvariety is the extension subvariety induced by a (possibly non-unique) closed point of $\M$.
\end{proposition}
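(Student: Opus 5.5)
The forward direction follows almost immediately from Proposition \ref{prop:coord-ring-embed-in-K}. For a closed point $p \in \M$ with maximal ideal $\m$, the ideal $\m \cap \C[V]$ is prime, so $Y_p = \V(\m \cap \C[V])$ is an irreducible closed subvariety of $V$. The composite $\C[V] \to K$ has image isomorphic to $\C[Y_p]$. Since it restricts to the identity on $\C[V]^G$, the restriction $\C[V]^G \to \C[Y_p]$ is injective. Every element of this image lies in $K = \bar{\C(V)^G}$, so it is algebraic over $\C(V)^G$. Clearing denominators in its minimal polynomial shows it satisfies a nonzero polynomial with coefficients in $\C[V]^G$. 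Hence $\C[Y_p]$ is an algebraic ring extension of $\C[V]^G$.

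For the converse, let $Y \subset V$ be a closed subvariety (irreducible, as is implicit in ``subvariety'') with $\res: \C[V]^G \hookrightarrow \C[Y]$ injective and $\C[Y]$ algebraic over $\C[V]^G$. The plan has three steps.

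\begin{enumerate}
\item Injectivity extends to an embedding of fields $\C(V)^G = \Frac \C[V]^G \hookrightarrow \C(Y)$; here we use the standing hypothesis $\C(V)^G = \Frac(\C[V]^G)$.
\item By algebraicity, $\C(Y)$ is algebraic over the image of $\C(V)^G$. Since $K$ is an algebraic closure of $\C(V)^G$, the embedding extends to a $\C(V)^G$-embedding $\phi: \C(Y) \hookrightarrow K$, which is the standard extension theorem for algebraic closures.
\item Compose to get $\psi: \C[V] \to \C[Y] \to \C(Y) \xrightarrow{\phi} K$. Together with $\Id_K$, this gives a $\C$-algebra map $\Psi: \C[V] \otimes_\C K \to K$, $f \otimes a \mapsto \psi(f)a$.
\end{enumerate}

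Because $\phi$ is the identity on $\C(V)^G$, we have $\Psi(P \otimes 1 - 1 \otimes P) = P - P = 0$ for every $P \in \C[V]^G$. So $\Psi$ factors through $\C[V] \otimes_{\C[V]^G} K$. Moreover $\Psi$ is $K$-linear and surjective onto $K$, so its kernel $\m$ is a maximal ideal and defines a closed point $p \in \M$.

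It remains to show $Y_p = Y$, that is, $\m \cap \C[V] = I(Y)$. This holds because $\m \cap \C[V] = \ker \psi$, and $\psi$ is the quotient map $\C[V] \to \C[Y]$ followed by injective maps. Non-uniqueness is visible here: different choices of $\phi$, which differ by automorphisms of $K$ over $\C(V)^G$, can give different points $p$.

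The main obstacle is really a bookkeeping one. One has to be careful that the algebraicity hypothesis is read as ``every element of $\C[Y]$ is algebraic over (the image of) $\C[V]^G$.'' One also has to check that the extension $\phi$ exists with values in $K$ specifically; this needs the image of $\C(V)^G$ in $\C(Y)$ to be identified with $\C(V)^G \subset K$ via the restriction map, and Step 1 supplies exactly that.
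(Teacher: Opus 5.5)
Your proof is correct and follows the paper's argument. You embed $\C[Y]$ into $K$ compatibly with $\C[V]^G$, take the kernel of $f \otimes c \mapsto \phi(\res_{V \to Y} f)\, c$ on $\C[V] \otimes_{\C[V]^G} K$ as the maximal ideal, and identify its contraction to $\C[V]$ with the ideal of $Y$. The only difference is that you justify steps the paper leaves implicit: the existence of the $\C(V)^G$-embedding $\C(Y) \hookrightarrow K$ (via $\C(V)^G = \Frac \C[V]^G$ and the extension theorem for algebraic closures), and the clearing-of-denominators argument for algebraicity in the forward direction.
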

\begin{proof}
    The first statement immediately follows from Proposition \ref{prop:coord-ring-embed-in-K} and the following discussion. For the second, let $Y$ be such a subvariety; we will produce a closed point $p$ of $\M$ so that $Y = Y_p$. By hypothesis, the coordinate ring $\C[Y]$ embeds (possibly non-uniquely) into $K$ so that $\res_{V \to Y} : \C[V]^G \to \C[Y] \subset K$ is the identity on $\C[V]^G$. The homomorphism $\C[V] \otimes_{\C[V]^G} K \to K$ sending $f \otimes c \mapsto (\res_{V \to Y} f) c$ is surjective, and its kernel is a maximal ideal $\m \subset \C[V] \otimes_{\C[V]^G} K$ such that the composition
	\[\C[V]  \to \C[V] \otimes_{\C[V]^G} K \to \frac{\C[V] \otimes_{\C[V]^G} K}{\m} \cong K\]
	sends $\C[V]$ to the chosen embedding of $\C[Y]$ into $K$. This defines a closed point $p$ of $\hat{V}_K$ for which it is clear that $Y = Y_p$, and which furthermore belongs to $\M$ because $\res_{V \to Y} : \C[V]^G \to \C[Y]$ is injective.    
\end{proof}

\begin{corollary}
    If $(Y, W)$ has the Galois restriction property, then $Y$ is the extension subvariety associated to some closed point in $\M$.
\end{corollary}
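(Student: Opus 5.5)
The plan is to check the two hypotheses of the converse half of Proposition \ref{prop:M-and-subvarieties-correspondence} for $Y$, and then invoke that proposition. We may assume $\C(V)^G = \Frac(\C[V]^G)$, which is in force throughout this section. Concretely, we need that $\res_{V \to Y} : \C[V]^G \to \C[Y]$ is injective, and that $\C[Y]$ is an algebraic ring extension of $\C[V]^G$.

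For injectivity, we use the Galois restriction property. The map $\res_{V \to Y} : \C(V)^G \to \C(Y)^W$ is an isomorphism, and in particular it is well defined and injective. On $\C[V]^G \subset \C(V)^G$ it agrees with ordinary restriction of polynomials into $\C[Y] \subset \C(Y)$. Hence an invariant polynomial vanishing on $Y$ would be a nonzero element of $\C(V)^G$ mapping to zero, which is impossible.

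For algebraicity, we show that every $f \in \C[Y]$ is algebraic over $\C(Y)^W$. Since $W$ is finite and acts on $\C(Y)$ by field automorphisms, $f$ is a root of $\prod_{w \in W}(X - w\cdot f)$, whose coefficients lie in $\C(Y)^W$. By the Galois restriction property, $\C(Y)^W = \res_{V\to Y}(\C(V)^G) = \Frac(\res_{V\to Y}\C[V]^G)$. Clearing denominators with a common invariant polynomial gives a nonzero polynomial relation for $f$ over the image of $\C[V]^G$. So $\C[Y]$ is algebraic over $\C[V]^G$, viewed as a subring via the injective restriction map. The converse part of Proposition \ref{prop:M-and-subvarieties-correspondence} then yields a closed point $p \in \M$ with $Y = Y_p$.

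The main obstacle is one subtle point. The action of $W$ is only by birational automorphisms, so $w \cdot f$ need not be regular. This is harmless, because we only need algebraicity inside $\C(Y)$, and all the coefficients live in $\C(Y)^W$.
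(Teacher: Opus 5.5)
Your proposal is correct and follows essentially the same route as the paper: derive injectivity of $\res_{V \to Y}$ on $\C[V]^G$ and algebraicity of $\C[Y]$ over $\C[V]^G$ from the Galois restriction property, then apply the converse half of Proposition \ref{prop:M-and-subvarieties-correspondence}. The paper calls these two checks ``easy to see''; you spell them out, proving algebraicity with the polynomial $\prod_{w\in W}(X - w\cdot f)$ and clearing denominators via $\C(V)^G = \Frac(\C[V]^G)$.
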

\begin{proof}
    If $(Y, W)$ has the Galois restriction property, then $\res_{V \to Y} : \C(V)^G \to \C(Y)^W$ is an isomorphism and $\C(Y) / \C(V)^G$ is a Galois extension. From this it is easy to see that $\res_{V \to Y} : \C[V]^G \to \C[Y]$ is injective and $\C[Y]$ is an algebraic ring extension of $\C[V]^G$. The claim then follows from Proposition \ref{prop:M-and-subvarieties-correspondence}.
\end{proof}

Because our construction canonically embeds the coordinate rings and function fields of extension subvarieties into $K$, we can compare $\C(V)^G$ and $\C[Y]$ as subrings of $K$ to relate the Chevalley and Galois restriction properties.

\begin{proposition} \label{prop:crp-intersection-condition}
    Let $p \in \M$ be a closed point and suppose that the extension subvariety $Y = Y_p$ has the Galois restriction property. Then $Y$ has the Chevalley restriction property iff $\C[Y] \cap \C(V)^G = \C[Vz]^G$, where the intersection is meant as an intersection of subrings of $K$.
\end{proposition}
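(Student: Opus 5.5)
Everything will be done inside $K$, using the canonical embeddings of Proposition \ref{prop:coord-ring-embed-in-K}. There $\C[V]^G \subset \C[Y] \subset \C(Y) \subset K$ and $\C(V)^G \subset \C(Y)$, and the restriction maps are simply these inclusions. The Galois restriction property then says exactly that $\C(Y)^W = \C(V)^G$ as subfields of $\C(Y) \subset K$. Here $W$ acts on $\C(Y)$ by field automorphisms, which fix $\C(V)^G$ pointwise. (The statement's $\C[Vz]^G$ is a typo for $\C[V]^G$.)

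The first step is to identify $\C[Y]^W$ with $\C[Y] \cap \C(Y)^W$, both viewed in $K$. When $W$ acts by regular automorphisms, the invariant regular functions are exactly the regular functions that are $W$-invariant as rational functions. Combining this with the Galois restriction property gives
\[\C[Y]^W = \C[Y] \cap \C(Y)^W = \C[Y] \cap \C(V)^G.\]
The Chevalley restriction property only makes sense when $W$ acts regularly. So in the direction ``$\Leftarrow$'' I will assume, as implicit in the statement, that the $W$-action on $Y$ is by regular automorphisms. If one prefers not to assume this, $\C[Y]^W$ should be read as $\C[Y]\cap \C(Y)^W$.

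Next, the map $\res_{V\to Y}:\C[V]^G \to \C[Y]^W$ is the inclusion $\C[V]^G \subset \C[Y]\cap\C(V)^G$. It is injective by the discussion after Proposition \ref{prop:coord-ring-embed-in-K}, and it lands in the $W$-invariants because $\C[V]^G \subset \C(V)^G = \C(Y)^W$. So it is an isomorphism exactly when it is surjective, that is, exactly when $\C[Y]\cap\C(V)^G = \C[V]^G$. This gives both directions at once.

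The only delicate point is bookkeeping rather than mathematics. One must check that the $W$-action and the embedding into $K$ are compatible, i.e.\ that $\C(Y)^W$ sits inside $K$ as the subfield $\C(V)^G$. This follows because restriction was shown to be the canonical inclusion, and the Galois restriction property asserts that this inclusion has image exactly $\C(Y)^W$.
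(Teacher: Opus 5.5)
Your proposal is correct and follows the paper's proof essentially verbatim. Both identify $\C[Y]^W = \C[Y]\cap\C(Y)^W = \C[Y]\cap\C(V)^G$ inside $K$, and reduce the Chevalley restriction property to surjectivity of the injective restriction map $\C[V]^G \to \C[Y]^W$. Your remark that $\C[Y]^W = \C[Y]\cap\C(Y)^W$ tacitly requires $W$ to act by regular automorphisms is a fair clarification of a point the paper leaves implicit.
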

\begin{proof}
    Since $\res_{V \to Y} : \C(V)^G \to \C(Y)^W$ is an isomorphism, and $\C[V]^G = \C[V] \cap \C(V)^G$ and $\C[Y]^W = \C[Y] \cap \C(Y)^W$, $\res_{V \to Y}$ restricts to an injective map $\C[V]^G \to \C[Y]^W$. The Chevalley restriction property for $Y$ is then equivalent to surjectivity of this map. Thus, the Chevalley restriction property holds iff
    \[\C[V]^G = \C[Y]^W = \C[Y] \cap \C(Y)^W = \C[Y] \cap \C(V)^G\]
    inside of $K$.
\end{proof}

To illustrate this proposition, consider the final example from Section \ref{sec:two-restriction-properties}: in the representation $M_{2 \times 2}(\C)$ of $\SL_2(\C)$, consider the subvariety $Y_3 = \{z_{12} = z_{21} = 0, z_{11} z_{22}^2 = 1\}$. Inside of $K = \bar{\C(\Delta)}$, $\C[Y_3] \cap \C(V)^G = \C[\Delta, \Delta^{-1}] \cap \C(\Delta)$. This intersection equals $\C[\Delta, \Delta^{-1}]$, which is strictly larger than $\C[V]^G$.

\begin{remark}
    At this juncture, the theory already has enough strength to clarify a key aspect of previous restriction theorems by providing an intrinsic definition of the Weyl group. For example, given a Cartan subspace $\mathfrak{c}$ of a polar representation $V$, Dadok and Kac defined the Weyl group $W$ in terms of the quotient of the normalizer and stabilizer of $\mathfrak{c}$ under the $G$-action, but remarked on the difficulty of explicitly computing $W$. In our language, $(\mathfrak{c}, W)$ has the Chevalley restriction property, and consequently has the Galois restriction property by Proposition \ref{prop:chevalley-implies-galois}. Therefore $W$ is precisely the Galois group of the extension $\C(\mathfrak{c}) / \C(V)^G$, and curiously can be defined without any reference to the original group $G$. The same idea applies to the Weyl group of the Chevalley restriction theorem and the group $W$ of the Luna-Richardson construction when it is finite.
\end{remark}

We conclude this section by enunciating the precise relationship between closed points in $\M$ and extension subvarieties in $V$.

\begin{theorem} \footnote{An incorrect version of this theorem appeared in an earlier version of this article.} \label{thm:galois-orbits-subvarieties-bijection}
	Let $Y$ be an extension subvariety of $V$. Then there is a natural bijection between the set
	\[A = \{\text{closed points }p \in \M \st Y_p = Y\}\]
	and the set
	\[B = \{\text{field embeddings } \sigma : \C(Y) \to K \st \sigma|_{\C(V)^G} = \Id_{\C(V)^G}\}.\]
\end{theorem}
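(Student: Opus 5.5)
We build maps in both directions and check that they are mutually inverse. The key tool is the description of closed points of $\M$ as surjections $\C[V]\otimes_{\C[V]^G} K \to K$ of $K$-algebras. Since $K$ is algebraically closed, closed points of $\M$ correspond bijectively to $K$-algebra homomorphisms $\C[V] \otimes_{\C[V]^G} K \to K$, as in the proof of Proposition \ref{prop:M-and-subvarieties-correspondence}. Equivalently, they correspond to ring homomorphisms $\phi : \C[V] \to K$ whose restriction to $\C[V]^G$ is the identity. Here $\C[V]^G$ is viewed inside $K$, and the restriction is automatically $\C$-linear.

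\emph{From $A$ to $B$.} Given $p \in A$ with associated homomorphism $\phi_p$, we have $\ker \phi_p = \m \cap \C[V] = I(Y)$ because $Y_p = Y$. Hence $\phi_p$ factors through an injective ring map $\bar\phi_p : \C[Y] \to K$. This is exactly the canonical embedding of Proposition \ref{prop:coord-ring-embed-in-K}. By the universal property of fraction fields it extends uniquely to a field embedding $\sigma_p : \C(Y) \to K$. Its restriction to $\C(V)^G$ is the identity. Indeed, $\sigma_p$ fixes $\C[V]^G$, and $\C(V)^G = \Frac(\C[V]^G)$ by the standing assumption of this section. Here $\C(V)^G$ sits in $\C(Y)$ via $\res_{V\to Y}$, which is injective because $Y$ is an extension subvariety.

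\emph{From $B$ to $A$.} Given $\sigma \in B$, define $\phi_\sigma$ as the composite $\C[V] \to \C[Y] \hookrightarrow \C(Y) \xrightarrow{\sigma} K$. It fixes $\C[V]^G$, so it determines a closed point $p_\sigma \in \M$. Its kernel is exactly $I(Y)$, since $\sigma$ is injective and $\C[Y]\to\C(Y)$ is injective. Hence $Y_{p_\sigma} = Y$ and $p_\sigma \in A$.

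\emph{Inverse bijections.} Starting from $p$, the composite $\C[V]\to\C[Y]\to\C(Y)\xrightarrow{\sigma_p}K$ equals $\phi_p$ by construction, so $p_{\sigma_p} = p$. Starting from $\sigma$, the embedding $\sigma_{p_\sigma}$ agrees with $\sigma$ on $\C[Y]$, so it agrees with $\sigma$ on $\C(Y)$ by uniqueness of extension to fraction fields. Naturality amounts to the observation that both constructions commute with post-composition by $\Aut(K/\C(V)^G)$.

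The main obstacle is the identification of closed points of $\M$ with $K$-algebra maps to $K$, and the check that distinct points give distinct maps. This is where $K$ being algebraically closed enters: every residue field at a closed point of a finite-type $K$-scheme is $K$, by the Nullstellensatz, so the isomorphism $(\C[V]\otimes K)/\m \cong K$ over $K$ is unique. One must also be careful that $\M$ is defined via $\otimes_{\C[V]^G}$, whereas $\hat Y_p$ is defined using $\C[V]\otimes_\C K$. The two are reconciled through the quotient map $\C[V]\otimes_\C K \to \C[V]\otimes_{\C[V]^G} K$, whose kernel is generated by the elements $P\otimes 1 - 1\otimes P$, as in the explicit presentation of $\M$. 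Hence maximal ideals of $\C[V]\otimes_{\C[V]^G} K$ correspond exactly to maximal ideals $\m$ of $\C[V]\otimes_\C K$ containing those elements. The intersection $\m\cap\C[V]$ is therefore unaffected by which description is used.
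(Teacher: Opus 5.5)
Your proposal is correct and follows essentially the same route as the paper. Both arguments identify closed points of $\M$ with maps $\C[V]\otimes_{\C[V]^G}K\to K$, i.e.\ ring maps $\C[V]\to K$ fixing $\C[V]^G$; both observe that the kernel is $I(Y)$, factor through $\C[Y]$, extend to $\C(Y)$ by the universal property of fraction fields, and check that the two constructions are mutually inverse. Your explicit use of the Nullstellensatz and your reconciliation of $\otimes_\C$ with $\otimes_{\C[V]^G}$ usefully clarify points the paper leaves implicit.
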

\begin{proof}
    Each closed point in $A$, given by a maximal ideal $\m \subset \C[V] \otimes_{\C[V]^G} K$, induces a map
	\[\C[V] \to \C[V] \otimes_{\C[V]^G} K \to \frac{\C[V] \otimes_{\C[V]^G} K}{\m} \cong K.\]
    The kernel of this map is $\m \cap \C[V]$, so we get an injection
	\[\C[Y] = \C[Y_p] = \frac{\C[V]}{\m \cap \C[V]} \to K\]
	which is the identity on $\C[V]^G$. By the universal property of fraction fields, this gives a field embedding $\sigma : \C(Y) \to K$ which is the identity on $\C(V)^G$. Conversely, any element $\sigma \in B$ induces a map
	\begin{align*}
		\C[V] \otimes_{\C[V]^G} K &\to K \\
		f \otimes c &\mapsto \sigma(\res_{V \to Y} f) c
	\end{align*}
	which is well-defined precisely because $\sigma$ is the identity on $\C(V)^G$. Since this map is surjective onto $K$, its kernel is a maximal ideal $\m \subset \C[V] \otimes_{\C[V]^G} K$, that is, a closed point of $\M$. To see that this closed point belongs to $A$, we observe that $\m \cap \C[V]$ is the kernel of the map $\C[V] \to \C[V] \otimes_{\C[V]^G} K \to K$, $f \mapsto f \otimes 1 \mapsto \sigma(\res_{V \to Y} f)$. Since $\sigma$ is injective, this kernel is precisely $\ker(\res_{V \to Y})$, the ideal of functions vanishing on $Y$.

    Finally, we check that these correspondences are inverse to one another. Given $\sigma \in B$, the corresponding element of $A$ is the maximal ideal $\m$ given by the kernel of the map $\C[V] \otimes_{\C[V]^G} K \to K$, $f \otimes c \mapsto \sigma(\res_{V \to Y} f) c$, which has the property that $\m \cap \C[V]$ is the ideal cutting out $Y$. This closed point in turn induces the map $\C[Y] = \frac{\C[V]}{\m \cap \C[V]} \to \frac{\C[V] \otimes_{\C[V]^G} K}{\m} \cong K$, $f + \m \cap \C[V] \mapsto (f \otimes 1) + \m \mapsto \sigma(f)$, so the induced embedding $\C(Y) \to K$ agrees with $\sigma$. On the other hand, given a point in $A$ represented by a maximal ideal $\m \subset \C[V] \otimes_{\C[V]^G} K$, the induced field embedding $\sigma$ is defined on $\C[Y]$ by the chain of maps $\C[Y] = \frac{\C[V]}{\m \cap \C[V]} \to \frac{\C[V] \otimes_{\C[V]^G} K}{\m} \cong K$, $f + \m \cap \C[V] \mapsto (f \otimes 1) + \m \mapsto \sigma(f)$. The element of $A$ induced from this $\sigma$ is exactly the maximal ideal given by the kernel of the map $\C[V] \otimes_{\C[V]^G} K \to \frac{\C[V] \otimes_{\C[V]^G} K}{\m}$, $r \mapsto r + \m$, which is simply $\m$.
\end{proof}

\section{The Chevalley restriction property in algebraic terms} \label{sec:CRP-algebraic}

Thanks to Proposition \ref{prop:crp-intersection-condition}, the question of when a subvariety with the Galois extension property also has the Chevalley restriction property is reduced to a question of computing intersections of subrings of $K$. Motivated by this perspective, we define the notion of the positive closure.

\begin{definition}
	Let $R$ be an integral domain. The positive closure of $R$ is
	\[\PC(R) = \{s \in \bar{\Frac R} \st R[s] \cap \Frac R = R\}.\]
\end{definition}

If $R$ is a field, then $\PC(R)$ is its algebraic closure, but explicitly computing $\PC(R)$ is generally quite difficult. The relationship of this definition to the Chevalley restriction property is the following.

\begin{proposition} \label{prop:PC-intersection-condition}
	Let $S$ be a ring such that $R \subseteq S \subseteq \bar{\Frac R}$. Then $S \cap \Frac R = R$ iff $S \subseteq \PC(R)$.
\end{proposition}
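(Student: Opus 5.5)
We prove the two implications separately, using only the definition of $\PC(R)$ and the sandwich $R \subseteq R[s] \subseteq S$ valid for every $s \in S$.

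For the forward direction, assume $S \cap \Frac R = R$ and fix $s \in S$. Since $S$ is a ring containing $R$ and $s$, we have $R \subseteq R[s] \subseteq S$. Intersecting with $\Frac R$ gives
\[R = R \cap \Frac R \subseteq R[s] \cap \Frac R \subseteq S \cap \Frac R = R,\]
so $R[s] \cap \Frac R = R$. As $s \in \bar{\Frac R}$ by hypothesis, this says $s \in \PC(R)$. Hence $S \subseteq \PC(R)$.

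For the converse, assume $S \subseteq \PC(R)$. The inclusion $R \subseteq S \cap \Frac R$ is clear. For the reverse, take $x \in S \cap \Frac R$ and set $s = x$. Since $s \in S \subseteq \PC(R)$, we have $R[s] \cap \Frac R = R$. But $x \in R[x]$ and $x \in \Frac R$, so $x \in R[x] \cap \Frac R = R$. Thus $S \cap \Frac R \subseteq R$, and equality holds.

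There is no real obstacle here. The only point to watch is that the converse evaluates the positive-closure condition at the element $x$ itself, rather than trying to use that $\PC(R)$ is closed under ring operations. That closure property is not known in general, and the argument above does not need it.
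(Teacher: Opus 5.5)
Your proof is correct and follows the paper's argument. The forward direction is identical. Your element-wise converse, taking $s = x$ for $x \in S \cap \Frac R$, is the unpacked form of the paper's step that writes $S = \bigcup_{s \in S} R[s]$ and distributes the intersection with $\Frac R$ over this union.
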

\begin{proof}
	Suppose first that $S \cap \Frac R = R$. For each $s \in S$, since $R[s] \subseteq S$, then
	\[R \subseteq R[s] \cap \Frac R \subseteq S \cap \Frac R = R\]
	and therefore $s \in \PC(R)$. Conversely, if $S \subseteq \PC(R)$, then $R[s] \cap \Frac R = R$ for every $s \in S$. Because $S = \bigcup_{s \in S} R[s]$ as subrings of $\bar{\Frac R}$, it follows that
	\[S \cap \Frac R = \left( \bigcup_{s \in S} R[s] \right) \cap \Frac R = \bigcup_{s \in S} \left( R[s] \cap \Frac R \right) = \bigcup_{s \in S} R = R.\]
\end{proof}

When $R = \C[V]^G$ (again under the assumption that $\C(V)^G = \Frac (\C[V]^G)$) and $S = \C[Y]$ where $Y$ is a subvariety with the Galois restriction property, then Propositions \ref{prop:crp-intersection-condition} and \ref{prop:PC-intersection-condition} together imply that $Y$ has the Chevalley restriction property iff $\C[Y] \subseteq \PC(\C[V]^G)$.

The rest of this section will focus on understanding some properties and characterizations of the positive closure. Our first result is a criterion for testing whether a given element of $\bar{\Frac R}$ belongs to $\PC(R)$ and resembles the definition of an integral element of a ring extension. 

\begin{lemma} \label{lem:PC-polynomial-criterion}
	Let $s \in \bar{\Frac R}$. Then $s \in \PC(R)$ iff for every polynomial $f(x) = r_k x^k + \cdots + r_1 x + r_0 \in R[x]$ satisfied by $s$ and every $p \in R$ such that $p$ divides $r_k, \dots, r_1$, then $p$ also divides $r_0$ (where divisibility is meant in $R$).
\end{lemma}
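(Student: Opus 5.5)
We prove both directions by translating membership in $R[s] \cap \Frac R$ into polynomial relations satisfied by $s$. The basic observation is that every element of $R[s]$ has the form $g(s)$ for some $g(x) = a_k x^k + \cdots + a_1 x + a_0 \in R[x]$. Such an element lies in $\Frac R$ exactly when $g(s) = a/p$ for some $a, p \in R$ with $p \neq 0$. Clearing denominators, this says that $s$ is a root of
\[
f(x) = p a_k x^k + \cdots + p a_1 x + (p a_0 - a) \in R[x].
\]
In $f$, the element $p$ divides every non-constant coefficient. So the plan is to match the data $(g, a, p)$ with the data $(f, p)$ appearing in the criterion.

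For the direction ($\Leftarrow$), assume the divisibility criterion holds, and let $t \in R[s] \cap \Frac R$. Write $t = g(s) = a/p$ as above. The polynomial $f$ has $s$ as a root, and $p$ divides $p a_i$ for each $i \geq 1$. By hypothesis, $p$ therefore divides the constant term $p a_0 - a$, so $p$ divides $a$ in $R$. Hence $t = a/p \in R$, which gives $R[s] \cap \Frac R \subseteq R$. The reverse inclusion is trivial, so $s \in \PC(R)$.

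For the direction ($\Rightarrow$), assume $s \in \PC(R)$. Take any $f(x) = r_k x^k + \cdots + r_0$ with $f(s) = 0$, and any $p \in R$ dividing $r_1, \dots, r_k$. Write $r_i = p q_i$ for $i \geq 1$.

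The case $p = 0$ has to be handled separately. Then $r_1 = \cdots = r_k = 0$, so $f(s) = 0$ forces $r_0 = 0$. Since $0 \mid 0$, the conclusion holds.

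If $p \neq 0$, set $t = q_k s^k + \cdots + q_1 s \in R[s]$. From $f(s) = 0$ we get $p t = -r_0$, so $t = -r_0/p \in \Frac R$. Thus $t \in R[s] \cap \Frac R = R$. Therefore $r_0 = -p t$ with $t \in R$, which means $p$ divides $r_0$ in $R$.

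The main point requiring care is the translation in the first paragraph: every element of $R[s] \cap \Frac R$ must arise from some polynomial relation of the stated shape. This holds because any $t \in R[s]$ is some $g(s)$ with $g \in R[x]$. We never need $g$ to be minimal or unique, so non-uniqueness of the representation is harmless. The only remaining subtleties are the degenerate case $p = 0$ and reading divisibility in $R$ rather than in $\Frac R$; both are addressed above.
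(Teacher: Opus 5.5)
Your proof is correct and is essentially the paper's argument. For ($\Leftarrow$) you clear the denominator of an element $g(s) = a/p$ of $R[s] \cap \Frac R$, and for ($\Rightarrow$) you divide $f(s)=0$ by $p$ to place $-r_0/p$ in $R[s] \cap \Frac R = R$. Your separate treatment of the case $p = 0$ closes a small gap, since the paper's proof divides by $p$ without noting that $p \neq 0$ is needed.
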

\begin{proof}
	Suppose $s \in \PC(R)$ satisfies a polynomial $f(x) = r_k x^k + \cdots + r_1 x + r_0 \in R[x]$, and there is some $p \in R$ dividing $r_k, \dots, r_1$. Then, dividing the equation $f(s) = 0$ through by $p$, we have
    \[\frac{r_k}{p} s^k + \cdots + \frac{r_1}{p} s = -\frac{r_0}{p}.\]
	By the divisibility assumption, $\frac{r_k}{p}, \dots, \frac{r_1}{p} \in R$, so the left-hand side belongs to $R[s]$, while the right-hand side belongs to $\Frac R$. Thus, since $s \in \PC(R)$, in fact $\frac{r_0}{p} \in R$, that is, $p$ divides $r_0$.
	
	Conversely, let $s \in \bar{\Frac R}$, and suppose that for every polynomial $f(x) = r_k x^k + \cdots + r_1 x + r_0 \in R[x]$ satisfied by $s$ and every $p \in R$ such that $p$ divides $r_k, \dots, r_1$, then $p$ also divides $r_0$. Let $q \in R[s] \cap \Frac R$, $q = r_k s^k + \cdots + r_1 s + r_0 = \frac{n}{d}$. Rearranging and multiplying through by $d$, we have
    \[(d r_k) s^k + \cdots + (d r_1) s + (d r_0 - n) = 0.\]
	By the divisibility assumption, since $d$ divides $d r_k, \dots, d r_1$, then $d$ also divides $d r_0 - n$. Since $d$ clearly divides $d r_0$, this means that $d$ divides $n$; therefore $q$ belongs to $R$.
\end{proof}

Like the integral closure of $R$ in $\bar{\Frac R}$, the positive closure is transitive in the following sense.

\begin{lemma}
	If $s \in \PC(R)$ and $t \in \PC(R[s])$, then $t \in \PC(R)$.
\end{lemma}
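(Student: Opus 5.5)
We argue directly from the definition of the positive closure. We must show $R[t] \cap \Frac R = R$. First we check that everything sits in a common field. Since $s$ is algebraic over $\Frac R$, we have $\Frac(R[s]) = (\Frac R)(s)$, which is algebraic over $\Frac R$. Hence $\bar{\Frac R[s]} = \bar{\Frac R}$, and $t \in \bar{\Frac R}$. So the definition of $\PC(R)$ makes sense for $t$.

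The key step is to insert the intermediate ring $R[s,t]$. Let $q \in R[t] \cap \Frac R$. Since $R[t] \subseteq R[s][t]$ and $\Frac R \subseteq \Frac R[s]$, we get
\[q \in R[s][t] \cap \Frac(R[s]).\]
Because $t \in \PC(R[s])$, this intersection equals $R[s]$, so $q \in R[s]$. Now $q \in R[s] \cap \Frac R$, and $s \in \PC(R)$ gives $R[s] \cap \Frac R = R$. Hence $q \in R$. This proves $R[t] \cap \Frac R \subseteq R$, and the reverse inclusion is trivial, so $t \in \PC(R)$.

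The only point needing care is the identification of the ambient fields: $\Frac R \subseteq \Frac(R[s])$ inside $\bar{\Frac R}$, and the algebraic closure of $\Frac(R[s])$ can be taken to be $\bar{\Frac R}$. Both hold because $s$ is algebraic over $\Frac R$. Once this is settled, the argument is just the two intersection conditions chained together; the polynomial criterion of Lemma \ref{lem:PC-polynomial-criterion} is not needed.
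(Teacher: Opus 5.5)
Your proof is correct and follows the paper's argument. Both chain the two intersection conditions through $R[s,t] = R[s][t]$: the paper first shows $R[s,t] \cap \Frac R = R$ and then intersects with $R[t]$, while you chase a single element $q \in R[t] \cap \Frac R$ through the same inclusions, and your identification of $\bar{\Frac(R[s])}$ with $\bar{\Frac R}$ is a slightly more careful version of the paper's well-definedness remark.
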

\begin{proof}
	First, note that since $s \in \PC(R) \subseteq \bar{\Frac R}$, a field, then $R[s]$ is an integral domain and $\PC(R[s])$ is well-defined. Since $R[s] \cap \Frac R = R$ and $R[s, t] \cap \Frac (R[s]) = R[s]$, we have
	\[R[s, t] \cap \Frac R = R[s, t] \cap \Frac (R[s]) \cap \Frac R = R[s] \cap \Frac R = R.\]
	Therefore $R \subseteq R[t] \cap \Frac R \subseteq R[s, t] \cap \Frac R = R$, so $R[t] \cap \Frac R = R$.
\end{proof}

However, in contrast to the integral closure, $\PC(R)$ may not be a ring. We know of no criterion to tell when $\PC(R)$ is a ring, except the simple observation above when $R$ is a field, and any such condition applicable to the Chevalley restriction property would likely be exceedingly strict. For example, even the polynomial ring $R = \C[x]$ fails to have a ring as its positive closure: take $s_1, s_2$ to be the roots of $f(y) = (x+1) y^2 - xy + 1$ in $\bar{\C(x)}$; then $s_1$ and $s_2$ both belong to $\PC(R)$, but their sum $\frac{x}{x+1}$ does not. Even explicitly describing the positive closure of a simple example like $R = \C[x]$ seems quite difficult. 

Because $\PC(R)$ is not a ring, in order to check whether $Y$ has the Chevalley restriction property, it is not enough to just check that the generators of $\C[V]$ map into $\PC(\C[V]^G)$ under the map $\C[V] \to \C[Y] \to K$. One main takeaway from this is that the Chevalley restriction property is rather poorly behaved from the algebraic viewpoint and consequently very difficult to detect in families of extension subvarieties. We will see evidence in Section \ref{sec:families-over-C} that the Galois restriction property is much more well-behaved in this regard.

In light of the above, we switch perspectives now to pursue a geometric formulation of the positive closure. It's easy to see that the natural morphism from any extension subvariety $Y$ to $V \sslash G$ is dominant, but the Chevalley restriction property is in fact related to the following stronger notion.

\begin{definition}
	Let $\pi : X \to Y$ be a morphism of varieties over $k$. We say $\pi$ is surjective up to codimension two if 
	\[\min(\codim_Y Z_1, \dots, \codim_Y Z_k) \geq 2\]
	where $Z_1, \dots, Z_k$ are the irreducible components of $\bar{Y \setminus \im \pi}$.
\end{definition}

Because of our intention of studying invariant rings, we will work in the case when $R$ is a reduced factorial finitely-generated $\C$-algebra, for example a polynomial ring over $\C$. (The factoriality assumption can probably be weakened.) 

\begin{remark}
    The factoriality condition is automatic for polar representations \cite{Dadok1985} and whenever $G$ is a connected semisimple group \cite{schwarz-lifting-smooth-homotopies}.
\end{remark}

\begin{lemma}
	Let $R$ be a a reduced factorial finitely-generated $\C$-algebra and let $s \in \bar{\Frac R}$. Then $s$ has a minimal polynomial over $R$, i.e. the kernel of the homomorphism $R[x] \to R[s]$ is of the form $(m(x))$ for some $m(x) \in R[x]$.
\end{lemma}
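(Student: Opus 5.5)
Write $F = \Frac R$ and let $I = \ker(R[x] \to R[s])$. The plan is to build a generator of $I$ from the minimal polynomial of $s$ over the field $F$, using the fact that $R$ is factorial. The relevant tools are Gauss's lemma and the notion of content, which make sense because $R$ is a UFD.

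First, since $s \in \bar{F}$ is algebraic over $F$, it has a monic minimal polynomial $\mu(x) \in F[x]$. The kernel of $F[x] \to F(s)$ is then exactly $\mu(x) F[x]$.

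Next, I clear denominators and remove the content. Multiplying $\mu$ by a common denominator of its coefficients gives a polynomial in $R[x]$. Dividing that polynomial by the gcd of its coefficients, which exists since $R$ is a UFD, gives a primitive polynomial $m(x) \in R[x]$ with $m = c\mu$ for some $c \in F^\times$. In particular $m(s) = 0$, so $(m(x)) \subseteq I$.

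For the reverse inclusion, let $g \in I$. Then $g$ lies in $R[x]$ and satisfies $g(s) = 0$, so $g \in \mu F[x] = m F[x]$. Write $g = m h$ with $h \in F[x]$. Now write $h = (a/b) h_0$, where $h_0 \in R[x]$ is primitive and $a, b \in R$ are coprime. By Gauss's lemma, the product $m h_0$ of primitive polynomials over a UFD is again primitive.

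From $b g = a\, m h_0$, taking contents gives $b \cdot \mathrm{cont}(g) = a$ up to a unit. Since $a$ and $b$ are coprime, $b$ must be a unit. Hence $h \in R[x]$ and $g \in (m(x))$, so $I = (m(x))$.

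The only substantive step is the Gauss's lemma argument, namely that primitivity is preserved under products and that content is multiplicative over a UFD. This is standard for any UFD, so no real obstacle is expected. The hypotheses that $R$ is reduced and finitely generated are not needed here beyond $R$ being a factorial domain.
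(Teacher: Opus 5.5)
Your argument is correct, but it takes a different route from the paper.

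The paper argues dimension-theoretically. $I$ is a nonzero prime of $R[x]$, and the inclusion $R \to R[s]$ induces a dominant morphism $\mSpec R[s] \to \mSpec R$. This gives $\operatorname{height}(I) = \dim R + 1 - \dim R[s] \le 1$. So $I$ is a height-one prime in the UFD $R[x]$, and therefore principal.

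You instead construct the generator explicitly, as the primitive part of the minimal polynomial of $s$ over $\Frac R$. You then verify $I = (m)$ using the content form of Gauss's lemma.

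Your version is more elementary and more general. It uses only that $R$ is a UFD. The paper's height computation needs $R$ to be a finitely generated domain over a field, and that is where its extra hypotheses are actually used. Your route also shows from the start that $m$ is primitive and is, up to a scalar in $\Frac R$, the field-theoretic minimal polynomial. The paper has to recover that primitivity separately, inside the proof of Theorem \ref{thm:PC-surjective-codim-2}, where it argues that $d$ cannot divide every coefficient of $m$.

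In exchange, the paper's route is shorter and fits the geometric viewpoint of that section. Both arguments ultimately rest on the same input, since factoriality of $R[x]$ is itself a consequence of Gauss's lemma.
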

\begin{proof}
	First, since $R[s] \subset \bar{\Frac R}$, a field, then $R[s]$ is a reduced finitely-generated $\C$-algebra, and we may set $X = \mSpec (R[s])$ and $Y = \mSpec R$. Since $R$ is a UFD, so is the polynomial ring $R[x]$. Let $I$ be the kernel of the homomorphism $R[x] \to R[s]$; since the target is an integral domain, $I$ is prime. We will show that in fact $I$ is principal. Since the homomorphism $R \to R[s]$ induces a homomorphism of fields $\Frac R \to \Frac(R[s])$, the induced morphism $X \to Y$ is dominant and therefore $\dim X \geq \dim Y$. Hence,
	\[\operatorname{height}(I) = \codim_{\mSpec(R[x])} X = \dim(\mSpec(R[x])) - \dim X = \dim Y - \dim X + 1\]
	is either 0 or 1. Since $I \neq (0)$, however, we must have $\operatorname{height}(I) = 1$. In other words, $I$ is a height-one prime ideal in the UFD $R[x]$ and is therefore principal, $I = (m(x))$ for some $m(x) \in R[x]$.
\end{proof}

\begin{theorem} \label{thm:PC-surjective-codim-2}
	Let $R$ be a a reduced factorial finitely-generated $\C$-algebra and let $Y = \mSpec R$. Let $s \in \bar{\Frac R}$, $X = \mSpec(R[s])$, and $\pi : X \to Y$ the morphism induced by the inclusion $R \to R[s]$. Then $s \in \PC(R)$ iff $\pi$ is surjective up to codimension two.
\end{theorem}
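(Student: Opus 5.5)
The plan is to combine the divisibility criterion of Lemma \ref{lem:PC-polynomial-criterion} with the minimal polynomial $m(x) = r_k x^k + \cdots + r_1 x + r_0 \in R[x]$ of $s$ from the preceding lemma. Since $R[s] \cong R[x]/(m)$, we have $X = \V(m) \subset Y \times \A^1$, and $\pi$ is the projection. Since $R[x]$ is a UFD and $(m)$ is prime, $m$ is irreducible; in particular the content $\gcd(r_0, \dots, r_k)$ is a unit.

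\emph{Step 1: describe the complement of the image.} A point $y \in Y$ fails to lie in $\im \pi$ exactly when $m(y, x)$, as a polynomial in $x$, is a nonzero constant. That is, $r_1(y) = \cdots = r_k(y) = 0$ and $r_0(y) \neq 0$. Hence
\[Y \setminus \im \pi = \V(r_1, \dots, r_k) \setminus \V(r_0).\]
Let $g = \gcd(r_1, \dots, r_k)$, which exists since $R$ is factorial. The codimension-one part of $\V(r_1, \dots, r_k)$ is $\V(g)$: in a UFD, height-one primes are principal, $(q)$ with $q$ prime, and $(q) \supseteq (r_1, \dots, r_k)$ iff $q \mid g$. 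So $\overline{Y \setminus \im \pi}$ has a codimension-one component iff some prime $q \mid g$ satisfies $q \nmid r_0$. Here we use that $\V(q)$ is irreducible and that $\V(q) \setminus \V(r_0)$ is dense in $\V(q)$ exactly when $q \nmid r_0$. Since the content of $m$ is a unit, $q \mid g$ already forces $q \nmid r_0$. So $\pi$ is surjective up to codimension two iff $g$ is a unit.

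\emph{Step 2: reduce the criterion of Lemma \ref{lem:PC-polynomial-criterion} to $m$ itself.} Every $f \in R[x]$ with $f(s) = 0$ has the form $f = h m$. The key claim is that $s \in \PC(R)$ iff every $p$ dividing $r_1, \dots, r_k$ also divides $r_0$, i.e.\ iff $g \mid r_0$. Given primitivity, this is equivalent to $g$ being a unit.

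For the direction ``$g$ a unit $\Rightarrow s \in \PC(R)$'', argue directly. Take $q = \sum a_i s^i \in R[s] \cap \Frac R$ with $q = n/d$. Then $d\sum a_i x^i - n = h m$ in $R[x]$ with $h \in R[x]$. Reduce modulo a prime factor $\ell$ of $d$, working in $(R/\ell)[x]$, which is a domain. There $\bar h \bar m = -\bar n$ is a constant. Since $g$ is a unit, not all of $\bar r_1, \dots, \bar r_k$ vanish, so $\bar m$ has positive degree. This forces $\bar h = 0$ and hence $\ell \mid n$. Cancel $\ell$ and induct on the number of prime factors of $d$ to conclude $q \in R$.

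For the converse, if a prime $q$ divides $g$, then $q \nmid r_0$ and $m(s) = 0$ violates Lemma \ref{lem:PC-polynomial-criterion} with $p = q$. Alternatively, $-r_0/q = \sum (r_i/q) s^i \in R[s] \cap \Frac R$ but $-r_0/q \notin R$.

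\emph{Step 3: combine.} Steps 1 and 2 together give that $s \in \PC(R)$ iff $g$ is a unit iff $\pi$ is surjective up to codimension two. The main obstacle is the geometric bookkeeping in Step 1: identifying the codimension-one components of $\overline{\V(r_1,\dots,r_k) \setminus \V(r_0)}$ with prime divisors of $g$. This uses factoriality, namely that height-one primes are principal and that every component of $\V(r_1, \dots, r_k)$ of codimension one is some $\V(q)$. I expect this to go through cleanly, but it needs care when $k = 0$, which cannot happen since $m$ is irreducible and nonconstant, and with reducedness of $R/q$ versus primality.
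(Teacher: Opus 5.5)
Your proof is correct, but it takes a different route from the paper in one of the two directions.

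\textbf{The direction you share with the paper.} For ``not surjective up to codimension two $\Rightarrow s\notin\PC(R)$'' the paper argues essentially as you do. A component $\V(d)$ of $\overline{Y\setminus\im\pi}$ of codimension $\le 1$ lies in $\V(r_k,\dots,r_1)$. So $d$ divides $r_k,\dots,r_1$ but, by primitivity of $m$, not $r_0$, and Lemma \ref{lem:PC-polynomial-criterion} finishes.

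\textbf{The direction where you differ.} For ``$s\notin\PC(R)\Rightarrow$ not surjective up to codimension two'' the paper does not use the minimal polynomial at all. It takes a witness $n/d\in(R[s]\cap\Frac R)\setminus R$ in lowest terms and notes that the relation $d\sum q_i s^i-n=0$ has no solution over any point of $\V(d)\setminus\V(n)$. Hence the codimension-one set $\V(d)=\overline{\V(d)\setminus\V(n)}$ lies in $\overline{Y\setminus\im\pi}$. You instead route both directions through the single invariant $g=\gcd(r_1,\dots,r_k)$. You prove the contrapositive ``$g$ a unit $\Rightarrow s\in\PC(R)$'' by writing $d\sum a_ix^i-n=hm$, reducing modulo prime factors $\ell$ of $d$ in the domain $(R/\ell)[x]$, and inducting.

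\textbf{What each route buys.} The paper's argument for that direction is shorter and needs neither the factorization of $d$ nor an induction. Yours costs a little more but gives sharper by-products:
\begin{enumerate}
\item $\V(g)$ is exactly the codimension-one part of $\overline{Y\setminus\im\pi}$.
\item $s\in\PC(R)$ if and only if the non-constant coefficients of the minimal polynomial have no common prime factor. In other words, the divisibility test of Lemma \ref{lem:PC-polynomial-criterion}, which quantifies over all polynomials satisfied by $s$ and all $p\in R$, only needs to be run on $m$ itself. This makes membership in $\PC(R)$ effectively checkable.
\end{enumerate}
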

\begin{proof}
	Suppose first that $\pi$ is not surjective up to codimension two. Let $Z = \V(d)$ be an irreducible component of $\bar{Y \setminus \im \pi}$ of codimension $0$ or $1$ in $Y$; in particular, $d$ is neither zero nor a unit. If $d$ were to divide all the coefficients of the minimal polynomial $m(x) = r_k x^k + \cdots + r_1 x + r_0 \in R[x]$ of $s$, then since $R[s]$ is a domain and $d$ is nonzero, $s$ would satisfy the polynomial $\frac{r_k}{d} x^k + \cdots + \frac{r_1}{d} x + \frac{r_0}{d}$. But this polynomial does not belong to $I = (m(x))$ because $d$ is not a unit. This is a contradiction, so $d$ does not divide all the coefficients of $m(x)$. The complement of the image of $\pi$ is the set of points in $Y$ where the equation $m(x) = 0$ cannot be satisfied, i.e. where $r_k = \cdots = r_1 = 0$ but $r_0 = 0$. Therefore
	\[\bar{Y \setminus \im \pi} = \bar{\V(r_k, \dots, r_1) \setminus \V(r_0)}.\]
	Since $Z \subseteq \bar{Y \setminus \im \pi}$, in particular we have $Z \subseteq \bar{\V(r_k, \dots, r_1)} = \V(r_k, \dots, r_1)$ and therefore $d$ divides $r_k, \dots, r_1$. It follows that $d$ cannot divide $r_0$, for then it would divide all the coefficients of $m$. By Lemma \ref{lem:PC-polynomial-criterion}, we then conclude that $s \notin \PC(R)$.
	
	Conversely, suppose that $s \notin PC(R)$, so there is some $q_k s^\ell + \cdots + q_1 s + q_0 = \frac{n}{d}$ which belongs to $(R[s] \cap \Frac R) \setminus R$. We may assume $n$ and $d$ share no irreducible factors. Clearing denominators and rearranging, we get the equation
	\[f(s) = d q_k s^\ell + \cdots + d q_1 s + d q_0 - n = 0.\]
	Set $Z = \V(d)$. Since $\frac{n}{d} \notin R$ and $n$ and $d$ share no irreducible factors, $\bar{\V(d) \setminus \V(n)} = \V(d)$. But on $\V(d) \setminus \V(n)$, the equation $f(x) = 0$ has no solutions, so $\V(d) \setminus \V(n) \subseteq Y \setminus \im \pi$ and therefore $\V(d) = \bar{\V(d) \setminus \V(n)} \subseteq \bar{Y \setminus \im \pi}$. Since $\V(d)$ has codimension 1 in $Y$, $\pi$ is not surjective up to codimension two.
\end{proof}

\begin{proposition} \label{prop:subring-of-positive-closure-iff-surjective-up-to-codimension-2}
	Let $R$ be a a reduced factorial finitely-generated $\C$-algebra and let $Y = \mSpec R$. Let $S$ be a ring such that $R \subseteq S \subseteq \bar{\Frac R}$ and $S$ is finitely generated over $R$. Let $X = \mSpec S$, and $\pi : X \to Y$ the morphism induced by the inclusion $R \to S$. Then $S \subseteq \PC(R)$ iff $\pi$ is surjective up to codimension two.
\end{proposition}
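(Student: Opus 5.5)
The plan is to reduce to Theorem \ref{thm:PC-surjective-codim-2} as far as possible, and otherwise to run its proof with a finite presentation of $S$ in place of a minimal polynomial. The key intermediate statement is this: $S \subseteq \PC(R)$ iff $S \cap \Frac R = R$, by Proposition \ref{prop:PC-intersection-condition}. So it suffices to prove that $S \cap \Frac R = R$ holds iff $\pi$ is surjective up to codimension two. Since $S \subseteq \bar{\Frac R}$ is a domain and is finitely generated over $R$, $X$ is an irreducible variety and $\pi$ is dominant. By Chevalley's theorem, $\im \pi$ is constructible, so $\bar{Y \setminus \im \pi}$ has finitely many components.

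\emph{Failure of surjectivity gives a non-trivial fraction.} Suppose some component $Z$ of $\bar{Y \setminus \im \pi}$ has codimension $\le 1$. Codimension $0$ is impossible by dominance, since $\im\pi$ contains a dense open set; hence $Z = \V(d)$ for an irreducible $d \in R$, using factoriality. Write $S = R[s_1,\dots,s_k]$ and let $I \subset R[x_1,\dots,x_k]$ be the kernel of the presentation. Points of $Y$ outside $\im\pi$ are those $y$ at which $I$ generates the unit ideal in $\C[x_1,\dots,x_k]$ after specializing at $y$. Over the generic point of $Z$ this must happen, for otherwise the image would be dense in $Z$.

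I would localize at the height-one prime $(d)$, where $R_{(d)}$ is a DVR. The fibre of $S_{(d)} = S \otimes_R R_{(d)}$ over the residue field $\Frac(R/d)$ is $S_{(d)}/dS_{(d)}$. A point of $\Spec S$ lying over the generic point of $Z$ exists iff this fibre is nonzero, and since $\pi$ is of finite type, the non-density of $\im\pi$ in $Z$ should force the fibre to vanish. That is, $d$ becomes a unit in $S_{(d)}$: $1/d = \sigma/u$ with $\sigma \in S$ and $u \in R\setminus(d)$. Then $u/d = \sigma \in S \cap \Frac R$, yet $u/d \notin R$ because $d \nmid u$. Hence $S \cap \Frac R \neq R$. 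Making this generic-fibre argument precise is the main obstacle; I would phrase it via Chevalley constructibility on the irreducible $Z$, namely that $\im\pi \cap Z$ is constructible and not dense, so it misses a dense open subset of $Z$ and therefore misses the generic point.

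\emph{A non-trivial fraction gives failure of surjectivity.} Suppose instead that $n/d \in S \cap \Frac R$ with $n/d \notin R$. Take $n$ and $d$ coprime and choose an irreducible factor $d'$ of $d$. This is exactly the converse direction of Theorem \ref{thm:PC-surjective-codim-2}, applied to the single element $s = n/d \in S$. Over $\V(d') \setminus \V(n)$ the relation $d\cdot s = n$ has no solution, so $\V(d') \subseteq \bar{Y \setminus \im\pi_s}$. Because $\im \pi \subseteq \im \pi_s$, where $\pi_s : \mSpec R[s] \to Y$, we get $\V(d') \subseteq \bar{Y\setminus \im\pi}$, and this divisor has codimension $1$. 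Combining the two directions with Proposition \ref{prop:PC-intersection-condition} gives the claim.
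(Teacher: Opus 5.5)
Your argument is correct. In the direction ``$\pi$ not surjective up to codimension two $\Rightarrow S \not\subseteq \PC(R)$'' it takes a genuinely different route from the paper, and a more robust one.

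The paper uses the single witness $1/d$. It asserts that $\pi^*(1/d)$ is regular on $X$ because $\im\pi$ does not meet $Z = \V(d)$. But being an irreducible component of $\bar{Y \setminus \im\pi}$ only forces $\im\pi$ to miss a dense open subset of $Z$, not all of it. For example, take $R = \C[x,y]$ and $S = R[y/x]$. Then $\im\pi = \{x \neq 0\} \cup \{(0,0)\}$, so $Z = \V(x)$ is a codimension-one component that meets $\im\pi$, and $1/x \notin S$.

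Your localization at the height-one prime $(d)$ needs only that the generic point of $Z$ is missed. That is exactly what Chevalley constructibility delivers, and it produces the right witness $u/d \in S$ with $d \nmid u$ (here $y/x$). The paper's version is shorter and stays at the level of regular versus rational functions, but its key step only goes through when $\im\pi \cap Z = \emptyset$.

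In the other direction you do essentially what the paper does. You factor $\pi$ through $\mSpec R[s]$ and rerun the converse half of Theorem~\ref{thm:PC-surjective-codim-2}. Your small improvement is passing to an irreducible factor $d'$, so that $\V(d')$ is really an irreducible divisor.

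When writing it up, state explicitly the two routine facts you rely on:
\begin{itemize}
\item $Y \setminus \im\pi$ contains a dense open subset of each component $Z$ of its closure, so $\im\pi \cap Z$ is not dense in $Z$.
\item A closed point in the image of $\Spec S \to \Spec R$ is the image of a closed point of $X$, by the Nullstellensatz. This is what lets you move between $\mSpec$ and $\Spec$.
\end{itemize}
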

\begin{proof}
	If $S \subseteq \PC(R)$, then $S \cap \Frac R = R$ inside of $\Frac S$. Interpreting this in terms of regular and rational functions, this is the same as saying that $\C[X] \cap \pi^* \C(Y) = \pi^* \C[Y]$ inside of $\C(X)$, where $\pi^*$ denotes the pullback of functions from $Y$ to $X$. Suppose by way of contradiction that $Y$ is not surjective up to codimension two, so there is an irreducible component $Z = \V(d) \subseteq \bar{Y \setminus \im \pi}$ of codimension 1 in $Y$, where $d$ is neither zero nor a unit. Let $r = \frac{1}{d} \in \C(Y)$; in particular $r \notin \C[Y]$. Since $r$ is regular outside of $Z$ and $\im \pi$ does not meet $Z$, $\pi^* r \in \C(X)$ is a regular function on $X$. In other words, $\pi^* r \in \C[X] \cap \pi^* \C(Y)$, so $\pi^* r \in \pi^* \C[Y]$, meaning that there is some $r' \in \C[Y]$ so that $r$ and $r'$ agree on $\im \pi$. But $\pi$ is dominant, so this forces $r$ and $r'$ to agree everywhere. This is a contradiction because $r$ is not regular on $Y$, and therefore $Y$ is surjective up to codimension $2$.
	
	Conversely, suppose that $S \nsubseteq \PC(R)$, so there exists some $s \in S$ so that $R[s] \cap \Frac R \neq R$. Observe that $\pi$ factors through $\mSpec(R[s])$: $\mSpec S \to \mSpec (R[s]) \to \mSpec R$. Since $s \notin \PC(R)$, the latter map is not surjective up to codimension two by Theorem \ref{thm:PC-surjective-codim-2}, and therefore the composition isn't either.
\end{proof}

This geometric understanding of the positive closure now allows us to recharacterize the Chevalley restriction property.

\begin{theorem} \label{thm:CRP-equivalent-conditions}
    Suppose $\C[V]^G$ is a UFD and let $(Y, W)$ have the Galois restriction property. Then the following are equivalent.
    \begin{enumerate}
        \item $(Y, W)$ has the Chevalley restriction property.
        \item $\C[Y] \subset \PC(\C[V]^G)$.
        \item The canonical morphism $Y \to V \sslash G$ is surjective up to codimension $2$.
        \item The canonical morphism $Y \to V \sslash G$ is surjective.
    \end{enumerate}
\end{theorem}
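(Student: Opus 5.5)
The plan is to prove $(1)\Leftrightarrow(2)\Leftrightarrow(3)$ by assembling the results already established, then close the loop with $(1)\Rightarrow(4)\Rightarrow(3)$. Throughout, put $R=\C[V]^G$. By hypothesis $R$ is a UFD, and it is a reduced finitely generated $\C$-algebra. Since $(Y,W)$ has the Galois restriction property, the Corollary to Proposition~\ref{prop:M-and-subvarieties-correspondence} lets us take $Y=Y_p$ for a closed point $p\in\M$. This embeds $\C[Y]$ in $K=\bar{\Frac R}$ with $R\subseteq \C[Y]\subseteq \bar{\Frac R}$, and $\C[Y]$ is finitely generated over $R$, being a quotient of $\C[V]$.

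For $(1)\Leftrightarrow(2)$, Proposition~\ref{prop:crp-intersection-condition} says the Chevalley restriction property is equivalent to $\C[Y]\cap\C(V)^G=\C[V]^G$. Because $\C(V)^G=\Frac R$ under our standing assumption, Proposition~\ref{prop:PC-intersection-condition} with $S=\C[Y]$ turns this into $\C[Y]\subseteq\PC(R)$. For $(2)\Leftrightarrow(3)$, apply Proposition~\ref{prop:subring-of-positive-closure-iff-surjective-up-to-codimension-2} with the same $S=\C[Y]$. Here one should check that $\mSpec S=Y$ and $\mSpec R=V\sslash G$, and that the morphism induced by $R\hookrightarrow S$ is the canonical morphism $Y\to V\sslash G$, i.e.\ the one dual to $\res_{V\to Y}$. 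This holds because the embedding into $K$ restricts to the identity on $R$.

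The implication $(4)\Rightarrow(3)$ is trivial, since if the morphism is surjective then $\bar{(V\sslash G)\setminus\im}$ is empty. It remains to show $(1)\Rightarrow(4)$. Assume the Chevalley restriction property. In Definition~\ref{def:chevalley-restriction-property}, $W$ is a finite group acting by regular automorphisms of $Y$, so every $f\in\C[Y]$ is a root of the monic polynomial $\prod_{w\in W}(x-w\cdot f)$, whose coefficients lie in $\C[Y]^W$. Thus $\C[Y]$ is integral over $\C[Y]^W$, which $\res_{V\to Y}$ identifies with $R$. By lying-over, every maximal ideal of $R$ is contracted from a maximal ideal of $\C[Y]$, so $Y\to V\sslash G$ is surjective. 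Equivalently, the morphism factors as $Y\to Y/W\cong V\sslash G$, where the first map is the surjective quotient by a finite group.

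The main obstacle is this last step, specifically reconciling the two meanings of $W$. In the Galois restriction property $W$ need only act birationally, while the integrality argument needs a regular action. I would handle this by noting that assertion (1) already presupposes that $(Y,W)$ satisfies Definition~\ref{def:chevalley-restriction-property}, hence that $W$ acts regularly. If one insists on a birational $W$, I would instead use the $(1)\Rightarrow(2)$ direction together with the fact that $W=\Gal(\C(Y)/\C(V)^G)$ preserves $\C[Y]\subset K$ whenever $\C[Y]^W=R$. That fact is the point I would need to check more carefully.
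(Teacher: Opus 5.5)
Your argument follows the paper's proof exactly. It uses the same four implications and cites the same results: Propositions~\ref{prop:crp-intersection-condition} and~\ref{prop:PC-intersection-condition} for $(1)\Leftrightarrow(2)$, and Proposition~\ref{prop:subring-of-positive-closure-iff-surjective-up-to-codimension-2} for $(2)\Leftrightarrow(3)$. Your integrality/lying-over argument for $(1)\Rightarrow(4)$ just spells out the paper's remark that $Y\to Y\sslash W\cong V\sslash G$ is surjective. You are also right that in $(1)\Rightarrow(4)$ the regularity of the $W$-action is part of hypothesis (1), so nothing more is needed there.

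However, the birational-versus-regular issue you raise is misplaced. It is harmless in $(1)\Rightarrow(4)$, but it is a real gap in $(2)\Rightarrow(1)$, which you, like the paper, take from Proposition~\ref{prop:crp-intersection-condition}. The intersection condition $\C[Y]\cap\C(V)^G=\C[V]^G$ only shows that $\res_{V\to Y}$ maps $\C[V]^G$ isomorphically onto $\C[Y]\cap\C(Y)^W$. Definition~\ref{def:chevalley-restriction-property} additionally requires $W$ to act by regular automorphisms, i.e.\ to preserve $\C[Y]\subset K$.

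The ``fact'' you propose to check, that $W$ preserves $\C[Y]$ whenever $\C[Y]^W=R$, is false. In the running example $G=\SL_2(\C)$, $V=M_{2\times 2}(\C)$, $R=\C[\Delta]$, take
\[Y=\{z_{21}=0,\ z_{11}=z_{22},\ z_{12}(1+z_{11})=1\}.\]
With $s=z_{11}$ we have $\C[Y]=\C[s,(1+s)^{-1}]$ and $\Delta|_Y=s^2$. So $(Y,W)$ has the Galois restriction property with $W=\{s\mapsto\pm s\}$. The three weaker conditions all hold:
\begin{itemize}
\item For (2): an even rational function of $s$ whose only possible finite pole is $s=-1$ has no finite poles, so $\C[Y]\cap\C(\Delta)=\C[\Delta]$.
\item For (3) and (4): $s\mapsto s^2$ maps $\C\setminus\{-1\}$ onto $\C$.
\end{itemize}
But $s\mapsto -s$ sends $(1+s)^{-1}$ to $(1-s)^{-1}\notin\C[Y]$, so $W$ does not act regularly on $Y$ and (1) fails.

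So the implications into (1), including the ``if'' direction of Proposition~\ref{prop:crp-intersection-condition}, need an extra hypothesis that $W$ acts on $Y$ by regular automorphisms. Alternatively, (1) could be weakened to say only that $\res_{V\to Y}$ maps $\C[V]^G$ isomorphically onto $\C[Y]\cap\C(Y)^W$. With either change, $\C[Y]^W=\C[Y]\cap\C(Y)^W$ is the genuine invariant ring, and your argument (and the paper's) goes through verbatim.
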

\begin{proof}
    The equivalence of conditions (1) and (2) follows from Proposition \ref{prop:crp-intersection-condition} and the definition of the positive closure. The equivalence of (2) and (3) is exactly Proposition \ref{prop:subring-of-positive-closure-iff-surjective-up-to-codimension-2}. Condition (1) implies condition (4) because the map $Y \to Y \sslash W$ is surjective and the Chevalley restriction property implies $Y \sslash W \cong V \sslash G$, and it is obvious that condition (4) implies condition (3).
\end{proof}

\begin{corollary} \label{cor:grp-intersect-every-closed-orbit}
    Suppose $\C[V]^G$ is a UFD and let $(Y, W)$ have the Galois restriction property. If $Y$ intersects every closed $G$-orbit, then $Y$ has the Chevalley restriction property.
\end{corollary}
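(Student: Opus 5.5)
The plan is to verify condition (4) of Theorem \ref{thm:CRP-equivalent-conditions}, namely that the canonical morphism $Y \to V \sslash G$ is surjective, and then invoke the equivalence (4) $\Leftrightarrow$ (1) from that theorem. The hypotheses match: $\C[V]^G$ is a UFD and $(Y, W)$ has the Galois restriction property, so the theorem applies directly.

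To establish surjectivity, I would use the standard fact from geometric invariant theory for reductive $G$ acting on an affine variety: the quotient morphism $q : V \to V \sslash G = \mSpec(\C[V]^G)$ is surjective, and each fiber $q^{-1}(\xi)$ contains exactly one closed $G$-orbit. Given a point $\xi \in V \sslash G$, let $O_\xi \subset q^{-1}(\xi)$ be the unique closed orbit. By hypothesis $Y \cap O_\xi \neq \emptyset$; pick $y$ in this intersection.

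The remaining step is to check that the canonical morphism $Y \to V \sslash G$ is the restriction of $q$ to $Y$. It is induced by the ring map $\res_{V \to Y} : \C[V]^G \to \C[Y]$, and this is the composite of the inclusion $\C[V]^G \subset \C[V]$ with restriction to $Y$. On varieties this composite corresponds to $Y \hookrightarrow V \xrightarrow{q} V \sslash G$. Hence $y \mapsto q(y) = \xi$, so every $\xi$ lies in the image, and (4) holds.

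There is no substantial obstacle here. The only subtlety is invoking the correct GIT fact, namely that every fiber of $q$ contains a closed orbit; this uses reductivity of $G$ through the separation of disjoint closed invariant subsets by invariants. One should also note that the map in Theorem \ref{thm:CRP-equivalent-conditions} is exactly $q|_Y$, which holds by construction.
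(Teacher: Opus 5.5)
Your proposal is correct and is exactly the paper's argument: each fiber of $V \to V \sslash G$ contains a unique closed orbit, which $Y$ meets by hypothesis, so $Y \to V \sslash G$ (which is indeed $q|_Y$) is surjective, and the implication (4) $\Rightarrow$ (1) of Theorem \ref{thm:CRP-equivalent-conditions} finishes. As in the paper, the conclusion should be read with $\C[Y]^W := \C[Y] \cap \C(Y)^W$ (cf.\ the proof of Proposition \ref{prop:crp-intersection-condition}): the Galois restriction property only provides a birational $W$-action, and nothing in this argument shows that $W$ acts on $Y$ by regular automorphisms.
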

\begin{proof}
    Since each fiber of $V \to V \sslash G$ contains a unique closed orbit, the map $Y \to V \sslash G$ is surjective, and the claim follows by Theorem \ref{thm:CRP-equivalent-conditions}.
\end{proof}

Corollary \ref{cor:grp-intersect-every-closed-orbit} bears much resemblance to an important result of Gatti and Viniberghi; compare with Proposition 8 of \cite{GATTI1978137}. Their work is a key ingredient in the work of Dadok and Kac on polar representations, who prove their restriction theorem by recognizing that every Cartan subspace intersects all closed $G$-orbits and only closed $G$-orbits. We have traded Gatti and Viniberghi's hypothesis that $Y$ only intersects closed $G$-orbits for the Galois restriction property, and our statement deals with an arbitrary finite group $W$ arising as the Galois group of $\C(Y) / \C(V)^G$ whereas their group $W$ is required to be exhibited as a stabilizer of $Y$. While Gatti and Viniberghi clearly recognized the importance of the invariant field in restriction theorems -- see their Proposition 5 and the proof of their Proposition 8 -- it appears that the role of Galois theory has gone unnoticed.

\begin{example}
    The converse of Corollary \ref{cor:grp-intersect-every-closed-orbit} does not hold. For example, take the action of $G = \C^\times$ on $V = \C^2$ by $c \cdot (z_1, z_2) = (c z_1, c^{-1} z_2)$. Since $\C[V]^G = \C[z_1 z_2]$ and $\C(V)^G = \Frac(\C[V]^G)$, all our standing assumptions are satisfied. The subvariety $Y = \{z_1 = 1\}$ has the Chevalley restriction property, but does not intersect the unstable closed orbit (the origin in $\C^2$). Note however that $Y$ does meet a non-closed orbit in the same fiber as the origin (the punctured line $\{z_2 = 0, z_1 \neq 0\}$).
\end{example}

\begin{remark}
     From the orbit-theoretic point of view, the previous example highlights a significant contrast between our work and that of Dadok and Kac. Specifically, Cartan subspaces of a polar representation necessarily meet every closed orbit and only closed orbits, whereas subvarieties with the Chevalley restriction property are free to meet non-closed orbits, as long as they meet every fiber of $V \to V \sslash G$.
\end{remark}

\section{Complex-parameter families of extension subvarieties} \label{sec:families-over-C}

Considering the extension subvarieties of $V$ parametrized by the closed points of the $K$-scheme $\M$, it seems natural to ask questions about the properties of a ``generic" extension subvariety. However, $\M$ appears too large to yield useful information on such genericity questions because any dense open in $\M$ will parametrize extension subvarieties whose function fields cover $K$. In this section, we will present examples of finite-dimensional complex families of extension subvarieties which illustrate the types of questions we are interested in.

Viewing the variety of closed points of $\M$ as a closed subvariety of $K^n$, our objective is cut $\M$ down to a finite-dimensional complex space by considering its intersection with $A^n$, where $A$ is a finite-dimensional $\C$-sub-vector space of $K$. The obstruction is that $\M \cap A^n$ does not make sense as a scheme-theoretic object because $\M$ is a $K$-scheme. To accomplish this goal, we need a strategy for passing between $\C$-schemes and $K$-schemes. The method we present below can be regarded as an analogue of Weil restriction (see \cite{Poonen2017}, or \cite{Weil1982} for a more classical description by Weil himself) and will be the subject of future work.

On the level of rings, the closed points we are interested in are precisely those for which the image of the $\C$-linear map $V^* \to \C[V] \to \C[V] \otimes_{\C[V]^G} K \to K$ is contained in $A$. Without loss of generality we may assume that the $\C$-subalgebra $\langle A \rangle$ of $K$ generated by the elements of $A$ contains $\C[V]^G$; if not, there are no such closed points. As such linear maps are elements of $\Hom_\C(V^*, A) \cong V \otimes_\C A$, the space $\mathcal{B}$ will be a subset of the scheme-theoretic analog $\hat{V \otimes_\C A}$ of this vector space. To view $\hat{V \otimes_\C A}$ as living inside of $K^n$, we need to base change to $K$, upon which we have a natural morphism of $K$-schemes $\mu : (\hat{V \otimes_\C A})_K \to \hat{V}_K$ given on closed points by $v \otimes a \mapsto a v$.

Thus, we may consider the inverse image $\mu^{-1}(\M) = (\hat{V \otimes_\C A})_K \times_{\hat{V}_K} \M$. Suppose that $\mu^{-1}(\M)$ is cut out by an ideal $I$ of $\C[V \otimes_\C A] \otimes_\C K$ whose generators $f_i$ are expressed in terms of elements $a_i$ of $\langle A \rangle$ and generators $x_i$ of $\C[V \otimes_\C A]$. From the geometric perspective, the $a_i$ represent regular functions on $S = \Spec \langle A \rangle$ and the $f_i$ represent algebraic combinations of these algebraic functions over $\C$ once complex values of the $x_i$ have been chosen. 

Thus, passing back from this $K$-scheme to a $\C$-scheme amounts to understanding the set of possible values of the $x_i$ over $\C$ that make the elements defining $I$ identically zero. To do this, we take the scheme-theoretic image $\mathcal{A}$ of the morphism $\varphi : \mu^{-1}(\M) \to (\hat{V \otimes_\C A}) \times_\C S$ given on the level of rings by
\begin{align*}
    \C[V \otimes_\C A] \otimes_\C \langle A \rangle &\to (\C[V \otimes_\C A] \otimes_\C K) / I \\
    r \otimes s &\mapsto (r \otimes s) + I.
\end{align*}
Geometrically, $\mathcal{A}$ is the common zero locus of the $f_i$ in $(\hat{V \otimes_\C A}) \times_\C S$. It then follows that the set of $x_i$ which make the $f_i$ identically zero on $S$ is the locus in $\hat{V \otimes_\C A}$ over which the fibers of $\mathcal{A} \to \hat{V \otimes_\C A}$ are the complete $S$, i.e. for which the fibers are of full dimension $\dim S$. By \cite[Tag 05F6]{stacks-project}, this is a constructible set in $\hat{V \otimes_\C A}$ which we call $\mathcal{B}$ (though in all examples we present $\mathcal{B}$ will be an honest closed subvariety), and its closed points are in natural bijection with the closed points of $\M$ lying in $A^n$.

By a similar argument, the scheme-theoretic image $\mathcal{X}$ of the morphism $\psi : \mu^{-1}(\M) \to (\hat{V \otimes_\C A}) \times_\C \hat{V}$ comes with a natural map $\rho : \mathcal{X} \to \hat{V \otimes_\C A}$ with the property that $\rho^{-1}(q)$ is precisely the scheme-theoretic image of $\mu(q_K)$ under the fiber product map $\pi : \hat{V}_K \to \hat{V}$. Therefore the fiber of $\mathcal{X}$ over a point belonging to $\mathcal{B} \subseteq \hat{V \otimes_\C A}$ is the extension subvariety associated to the corresponding point of $\M$. We can therefore view $\mathcal{Y} = \rho^{-1}(\mathcal{B}) \to \mathcal{B}$ as the family of extension subvarieties indexed by closed points in $\mathcal{M} \cap A^n$.

The rest of this section comprises examples of this technique used to construct families of extension subvarieties. Throughout, we use $\vspan_\C(k_1, \dots, k_r)$ to mean the $\C$-sub-vector space of $K$ spanned by $k_1, \dots, k_r$.

\begin{example}
    Consider the action of $G = \SL_2(\C)$ on $V = M_{2 \times 2}(\C)$. Let $A = \vspan_\C(\sqrt{\Delta})$ and denote by $x_{ij}$ the basis of $(V \otimes_\C A)^*$ dual to $E_{ij} \otimes \sqrt{\Delta}$. The morphism $\varphi$ is given on the level of rings by the map
    \[\C[x_{ij}, \sqrt{\Delta}] \to \frac{K[x_{ij}]}{(\Delta x_{11} x_{22} - \Delta x_{12} x_{21} - \Delta)},\]
    which has kernel $(x_{11} x_{22} - x_{12} x_{21} - 1)$ as $\Delta$ is a unit in the target ring. Thus
    \[\mathcal{A} = \Spec \frac{\C[x_{ij}, \sqrt{\Delta}]}{(x_{11} x_{22} - x_{12} x_{21} - 1)}.\]
    Since the fibers of $\mathcal{A} \to \hat{V \otimes_\C A}$ are all of dimension $1$, we then have
    \[\mathcal{B} =  \Spec \frac{\C[x_{ij}]}{(x_{11} x_{22} - x_{12} x_{21} - 1)}\]
    which encodes the fact that a matrix
    \[\begin{bmatrix}
        x_{11} \sqrt{\Delta} & x_{12} \sqrt{\Delta} \\
        x_{21} \sqrt{\Delta} & x_{22} \sqrt{\Delta}
    \end{bmatrix} \in M_{2 \times 2}(K)\]
    belongs to $\M$ iff $x_{11} x_{22} - x_{12} x_{21} = 1$. For the total space of the family, the morphism $\mu^{-1}(\M) \to (\hat{V \otimes_\C A}) \times_\C \hat{V}$ is given by the ring homomorphism
    \[\C[x_{ij}, z_{ij}] \to \frac{K[x_{ij}]}{(\Delta x_{11} x_{22} - \Delta x_{12} x_{21} - \Delta)}\]
    which sends $z_{ij}$ to $\sqrt{\Delta} x_{ij}$. The kernel is $(x_{ij} z_{k\ell} - x_{k \ell} z_{ij})$, and thus the family of extension subvarieties parametrized by $\M \cap A^4$ is
    \[\begin{array}{ccc}
        \mathcal{Y} & = & \{x_{11} x_{22} - x_{12} x_{21} = 1,\ x_{ij} z_{k \ell} = x_{k \ell} z_{ij}\} \\
        \downarrow & & \downarrow \\
        \mathcal{B} & = & \{x_{11} x_{22} - x_{12} x_{21} = 1\}.
    \end{array}\]
    From this, it's easy to see that the fiber of $\mathcal{Y} \to \mathcal{B}$ over the point $\begin{bsmallmatrix}
        x_{11} & x_{12} \\
        x_{21} & x_{22}
    \end{bsmallmatrix} = \begin{bsmallmatrix}
        c_{11} & c_{12} \\
        c_{21} & c_{22}
    \end{bsmallmatrix} \in M_{2 \times 2}(\C)$ is the Cartan subspace
    \[\left\{ \begin{bmatrix}
        c_{11} x & c_{12} x \\
        c_{21} x & c_{22} x
    \end{bmatrix} {\ \Bigg |\ } x \in \C \right\}\]
    and $\mathcal{Y} \to \mathcal{B}$ is the family of all Cartan subspaces of $M_{2 \times 2}(\C)$ under the action of $\SL_2(\C)$.
\end{example}

\begin{example}
    Again take the action of $G = \SL_2(\C)$ on $V = M_{2 \times 2}(\C)$, now with $A = \vspan_\C(1, \sqrt{\Delta}, \Delta)$. Notice that the function fields of extension subvarieties corresponding to points in $\M$ with coordinates in $A$ embed into $K = \bar{\C(\Delta)}$ either as $\C(\Delta)$ or $\C(\sqrt{\Delta})$. Let $a_{ij}$, $b_{ij}$, and $c_{ij}$ be the basis of $(V \otimes_\C A)^*$ dual to $E_{ij} \otimes 1$, $E_{ij} \otimes \sqrt{\Delta}$, and $E_{ij} \otimes \Delta$ respectively. Using a computation similar to the previous example, we can find that $\mathcal{B}$ is cut out by the equations
    \[a_{11} a_{22} - a_{12} a_{21} = b_{11} a_{22} + a_{11} b_{22} - a_{12} b_{21} - b_{12} a_{21} = b_{11} c_{22} + c_{11} b_{22} - b_{12} c_{21} - c_{12} b_{21} = c_{11} c_{22} - c_{12} c_{21} = 0,\]
    \[a_{11} c_{22} + b_{11} b_{22} + c_{11} a_{22} - a_{12} c_{21} - b_{12} b_{21} - c_{12} a_{21} = 1.\]
    Thus $\mathcal{B}$ is a seven-dimensional closed subvariety of the twelve-dimensional vector space $V \otimes_\C A$. The subvariety $\mathcal{Y} \subset \mathcal{B} \times_\C V$ is given by the equations
    \[(z_{ij} - a_{ij}) b_{k \ell} - (z_{k \ell} - a_{k \ell}) b_{ij} = (z_{11} z_{22} - z_{12} z_{21}) (c_{ij} b_{k \ell} - c_{k \ell} b_{ij})\]
    where $a_{ij}, b_{ij}, c_{ij}$ are coordinates on $\mathcal{B}$ as before and $z_{ij}$ are the coordinates on $V$. Notice that when all $a_{ij} = c_{ij} = 0$, the result of the previous example is recovered and the fibers are linear subspaces of $V$. On the other hand, for generic closed points of $\mathcal{B}$, the fiber is a degree-two curve in $V$. If all the $b_{ij}$ are zero, then the fiber $Y$ has $\C(Y) = \C(V)^G$, while if any $b_{ij}$ is nonzero, then $\C(Y) = \C(\sqrt{\Delta})$, a quadratic extension of $\C(V)^G$. In other words, the quadratic extension $\C(\sqrt{\Delta})$ is the ``generic" function field of the fibers of $\mathcal{Y} \to \mathcal{B}$.
\end{example}

\begin{example}
    Next, consider the action of $G = \C^\times$ on $V = \C^2$ where $c \cdot (z_1, z_2) = (cz_1, c^{-1} z_2)$. Then $\C[V]^G = \C[z_1 z_2]$, and taking $K = \bar{\C(T)}$, we get $\M = \Spec \frac{K[z_1, z_2]}{(z_1 z_2 - T)}$. Let $A = \vspan_\C(T^{m/n}, T^{n-m/n})$ where $\gcd(m, n) = 1$, and denote by $a$, $b$, $c$, and $d$ the basis of $(V \otimes_\C A)^*$ dual to the basis $e_1 \otimes T^{m/n}$, $e_1 \otimes T^{n-m/n}$, $e_2 \otimes T^{m/n}$, and $e_2 \otimes T^{n-m/n}$. The morphism $\varphi$ is given on the level of rings by the homomorphism
    \[\C[a, b, c, d, T^{m/n}, T^{n-m/m}] \to \frac{K[a, b, c, d]}{(ac T^{2m/n} + bd T^{2n-2m/n} + (ad+bc-1) T)},\]
    whose kernel is $(ac (T^{m/n})^2 + bd (T^{n-m/n})^2 + (ad + bc - 1) T^{m/n} T^{n-m/n})$, and therefore
    \[\mathcal{A} = \Spec \frac{\C[a, b, c, d, T^{m/n}, T^{n-m/m}]}{(ac (T^{m/n})^2 + bd (T^{n-m/n})^2 + (ad + bc - 1) T^{m/n} T^{n-m/n})}.\]
    Thus, the locus in $\hat{V \otimes_\C A}$ where the fibers of $\mathcal{A} \to \hat{V \otimes_\C A}$ are full-dimension is
    \[\mathcal{B} = \Spec \frac{\C[a, b, c, d]}{(ac, bd, ad+bc-1)}.\]
    This is a disconnected scheme with two irreducible components
    \[\Spec \frac{\C[a, b, c, d]}{(a, d, bc-1)} \quad \text{and} \quad \Spec \frac{\C[a, b, c, d]}{(b, c, ad-1)}.\]
    To compute $\mathcal{Y}$, the morphism $\psi$ is given on the level of rings as
    \[\C[a, b, c, d, x, y] \to \frac{K[a, b, c, d]}{(ac T^{2m/n} + bd T^{2n-2m/n} + (ad+bc-1) T)}\]
    where $x \mapsto a T^{m/n} + b T^{n-m/n}$ and $y \mapsto c T^{m/n} + d T^{n-m/n}$. By observing that $ay - cx \mapsto (ad-bc) T^{n-m/n}$ and $dx - by \mapsto (ad-bc) T^{m/n}$ under this homomorphism, we conclude that the kernel of this homomorphism is generated by $(ad-bc)^{n-m} (ay-cx)^m - (ad-bc)^m (dx-by)^{n-m}$ and therefore
    \[\mathcal{Y} = \Spec \frac{\C[a, b, c, d, x, y]}{(ac, bd, ad+bc-1, (ad-bc)^{n-m} (ay-cx)^m - (ad-bc)^m (dx-by)^{n-m})}.\]
    For example, given $t \in \C^\times$, the fiber of $\mathcal{Y} \to \mathcal{B}$ over the closed point $(a,b,c,d) = (t, 0, 0, t^{-1}) \in \B$ is
    \[\Spec \frac{\C[x, y]}{(t^{m-n} x^{n-m} - t^m y^m)},\]
    or in other words the subvariety of $V$ cut out by the function $t^{m-n} x^{n-m} - t^m y^m$. This is exactly the extension subvariety associated to the closed point $(t T^{m/n}, t^{-1} T^{n-m/n}) \in \M$, as this function generates the ideal $(x - t T^{m/n}, y - t^{-1} T^{n-m/n}) \cap \C[V]$.
\end{example}

These examples raise interesting questions about deformations of extension subvarieties, which will be the subject of a forthcoming article. Namely, in some fixed family $\mathcal{Y} \to \mathcal{B}$ one can ask questions such as the following:
\begin{itemize}
    \item What is the locus in $\mathcal{B}$ above which the fibers all have the Galois restriction property? Under what conditions does the generic fiber have this property?
    \item Fix a Galois extension $F$ of $\C(V)^G$ inside of $K$. Can we describe the locus in $\mathcal{B}$ above which the canonical images inside $K$ of the fibers' function fields are equal to $F$? Is one option of $F$ generic in the family?
    \item Perform this construction for each $\C$-sub-vector space $A$ of $K$ to get a family $\mathcal{Y}_A \to \mathcal{B}_A$. Taking the direct limit of $\mathcal{Y}_A \to \mathcal{B}_A$ over all such $A$, we obtain a family of ind-constructible sets over $\C$ which is some sense the ``universal" family of extension subvarieties of $V$. What properties does this universal family enjoy?
    \item The universal family in the previous bullet comes equipped with an action of the absolute Galois group $\Gal(K / \C(V)^G)$. Can we say anything interesting about this action?
\end{itemize}

\section{Period integrals from the Galois restriction property} \label{sec:period-integrals}

In this section, we will demonstrate that restriction properties, especially the Galois restriction property on the level of function fields, are quite useful for understanding the period integrals of Calabi-Yau (CY) families. Specifically, we will leverage the Galois restriction property to write explicit formulas for the periods of CY families in terms of invariant functions.

In \cite{Lian_2012}, Lian and Yau studied the period integrals of CY varieties $Y$ given as complete intersections in an ambient manifold $X$. Naively, these period integrals are defined by
\[\Pi = \int_\gamma \omega_Y\]
where $\gamma \in H_{\dim Y}(Y, \Z)$ and $\omega_Y$ is a holomorphic volume form on $Y$, which is unique up to scalar thanks to the fact that $Y$ is CY.

The study of these period integrals is rather subtle: the parameter space of such complete intersections is a Zariski open set in a representation $V$ of a reductive group $G$ and is generally noncompact and noncontractible, so a canonical choice of volume form must be made on each complete intersection. Lian and Yau's contribution was twofold: one, to provide a canonical normalization through a Poincar\'e residue technique on a special type of $G$-equivariant principal bundle over $X$; and two, to describe a $\D$-module, called a tautological system, which governs the period integrals as functions on the moduli space. The construction can also be adapted to so-called ``fractional complete intersections", which are CY branched covers of a fixed $X$.

For simplicity, let $X$ be a compact complex manifold whose anticanonical bundle $-K_X$ is very ample. Consider the family of CY hypersurfaces given by the zero locus of sections of $-K_X$, and let $V = H^0(-K_X)^*$. We are interested in the locus of sections $\sigma$ so that $\{\sigma = 0\}$ is a smooth hypersurface; this forms a Zariski open set $\mathcal{B}$ in $V^*$ on which the tautological system is defined. There is an action of $G = \Aut(X) \times \C^\times$ on $\mathcal{B}$ induced by automorphisms of $X$ and the $\C^\times$-action on scaling sections of $-K_X$. 

From this data, the tautological system is a $\D$-module comprising two types of operators: embedding polynomial operators encoding the geometry of the embedding of the ambient space $X$ into $\P V$, and symmetry operators encoding the transformation properties of the periods under the action of $G$. In many cases, such as the case of hypersurfaces in $\P^n$ or other homogeneous spaces, $\Aut(X)$ is semisimple and the transformation properties encoded by the symmetry operators are quite close to $G$-invariance, typically taking the form of invariance under the action of $\Aut(X)$ and homogeneity under the action of $\C^\times$. Our objective is to combine this observation and the theory of extension subvarieties and their restriction properties, in order to lift solutions to the tautological system defined on subvarieties of $\mathcal{B}$ to the entire moduli space. Although there are many descriptions of period integrals which can be viewed as functions on certain subvarieties of $\mathcal{B}$, we are not aware of any procedure for finding explicit solutions to tautological systems in terms of $G$-invariants other than the method we are about to introduce. The technique relies on interplay between the algebraic and analytic categories, as explained in the two following lemmas. 

Let $V$ be any representation of a reductive group $G$. (In particular, $\C(V)^G$ may be strictly larger than $\Frac(\C[V]^G)$.) By Rosenlicht's theorem \cite{rosenlicht}, there is a variety $Z$ whose function field is $\C(V)^G$ and a dominant rational map $\pi : V \dashto Z$ whose fibers are $G$-orbits in $V$.

\begin{lemma}[Analytic mapping lemma] \label{lem:analytic-mapping}
    Let ($Y, W)$ have the Galois restriction property. Then there is a nonempty Zariski open set $U$ in $V$ all of whose points $p$ satisfy the condition that $G \cdot p$ intersects $Y$. Furthermore, for any such $p$ belonging to this open set and any $y \in (G \cdot p) \cap Y$, there is an analytic neighborhood $U_p$ of $p$ in $V$, an analytic neighborhood $U_y$ of $y$ in $Y$, and a holomorphic map $\rho : U_p \to U_y$ so that $\pi \circ \rho = \pi$. The fibers of $\rho$ are intersections of $G$-orbits with $U_p$.
\end{lemma}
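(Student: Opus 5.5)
Since $(Y,W)$ has the Galois restriction property, $\res_{V\to Y}:\C(V)^G\to\C(Y)^W$ is an isomorphism, so $\C(Y)$ is a finite (Galois) extension of $\C(V)^G=\C(Z)$. I will first turn this into geometry. The restriction of $\pi$ to $Y$ is a rational map $\pi|_Y : Y \dashto Z$, well defined because $\res_{V\to Y}$ is defined on all of $\C(V)^G$, which is equivalent to $\pi$ being defined on a dense open subset of $Y$. This map is dominant, since the induced map on function fields is injective. It is also generically finite, since $\C(Y)/\C(Z)$ is a finite extension. In particular $\dim Y=\dim Z$.

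For the first claim, I would choose a nonempty open $Y^\circ\subseteq Y$ on which $\pi|_Y$ is a regular, finite, étale morphism onto a nonempty open $Z^\circ\subseteq Z$. This uses generic smoothness in characteristic zero and the fact that a generically finite dominant morphism is finite over a dense open subset of the target. I then shrink $Z^\circ$ so that $\pi$ is regular on $U=\pi^{-1}(Z^\circ)$ and the fibers of $\pi$ over $Z^\circ$ are exactly single $G$-orbits (Rosenlicht). For $p\in U$, the point $\pi(p)$ lies in $Z^\circ$ and so has a preimage $y\in Y^\circ$. Since $\pi(y)=\pi(p)$ and the fiber is one orbit, $y\in G\cdot p$. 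The set $U$ is open and nonempty because it is the preimage of a dense open set under a dominant map, and shrinking $U$ to a Zariski open subset is harmless.

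For the local map, take $p\in U$ and $y\in (G\cdot p)\cap Y$. The key point is that $y\in Y^\circ$; to guarantee this I will replace $U$ by $\pi^{-1}(Z^{\circ\circ})$, where $Z^{\circ\circ}$ is the open set over which all of $(\pi|_Y)^{-1}$ lies in $Y^\circ$. Such a set exists: it is the complement of the closure of $\pi(Y\setminus Y^\circ)$, which has dimension less than $\dim Z$. Then $\pi|_Y$ is étale at $y$, so by the holomorphic inverse function theorem it restricts to a biholomorphism $U_y\to W_z$ onto a neighborhood of $z=\pi(y)=\pi(p)$. Shrinking $Z^\circ$ further, I may assume $Z^\circ$ is smooth and that $\pi:U\to Z^\circ$ is a holomorphic submersion (again by generic smoothness). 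Choose an analytic neighborhood $U_p\subseteq\pi^{-1}(W_z)$ of $p$ and set $\rho=(\pi|_{U_y})^{-1}\circ\pi|_{U_p}$. This $\rho$ is holomorphic and satisfies $\pi\circ\rho=\pi$ by construction. Its fibers are $\pi^{-1}(z')\cap U_p$, and since fibers of $\pi$ over $Z^\circ$ are $G$-orbits, these are intersections of $G$-orbits with $U_p$.

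The main obstacle is the bookkeeping of Zariski open sets. The orbit-fiber property, the étaleness of $\pi|_Y$, and the regularity of $\pi$ must all hold simultaneously over a single open $Z^{\circ\circ}$, and every point of $(G\cdot p)\cap Y$ must land in the good locus $Y^\circ$, not just some point. The field-theoretic input from the Galois restriction property is used only to obtain dominance and generic finiteness of $Y\dashto Z$, which is what makes $\pi|_Y$ a local biholomorphism generically.
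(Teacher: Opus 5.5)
\textbf{There is a genuine gap: the orbit-fiber step fails, and the statement is false as stated.}

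Your strategy is the paper's: use dominance and generic finiteness of $\pi|_Y$, pass to an open set over which $\pi|_Y$ is \'etale, then set $\rho=(\pi|_{U_y})^{-1}\circ\pi$. Your insistence that \emph{every} $y\in(G\cdot p)\cap Y$ land in the \'etale locus is more careful than the paper's ``remove the branch locus''.

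The failing step is ``shrink $Z^\circ$ so that the fibers of $\pi$ over $Z^\circ$ are exactly single $G$-orbits''. Your deduction $\pi(y)=\pi(p)\Rightarrow y\in G\cdot p$ rests on it, and it is not bookkeeping:
\begin{enumerate}
\item Rosenlicht only gives a $G$-stable dense open $V_0\subseteq V$ on which $\pi$ is a morphism with orbit fibers.
\item Points of $\operatorname{dom}(\pi)\setminus V_0$ cannot be discarded by shrinking $Z$, because their image can be dense in $Z$.
\item The Galois restriction property does not force $Y$ to meet $V_0$.
\end{enumerate}

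For example, let $G=\C^\times$ act on $V=\C^2$ by $c\cdot(z_1,z_2)=(cz_1,z_2)$, and take $Y=\{z_1=0\}$ with $W=1$.
\begin{itemize}
\item $\C(V)^G=\C(z_2)$ maps isomorphically onto $\C(Y)=\C(z_2)$, so $(Y,W)$ has the Galois restriction property.
\item $\pi=z_2$ is regular everywhere and $\pi|_Y$ is an isomorphism.
\item Each fiber $\{z_2=b\}$ is the union of the orbit $\{z_1\neq 0,\ z_2=b\}$ and the fixed point $(0,b)$.
\item $G\cdot p\cap Y=\emptyset$ for every $p\notin Y$.
\end{itemize}
So the first assertion of the statement already fails, and no choice of open sets rescues your argument. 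The paper's own proof has the same defect: it too applies ``the fibers of $\pi$ are $G$-orbits'' to points of $Y$, for instance in ``by construction the $G$-orbit of each point in $U$ intersects $Y$''.

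\textbf{How to repair it.} What is missing is a hypothesis that $Y$ meets the generic orbits, namely $Y\cap V_0\neq\emptyset$. This is equivalent to $G\cdot Y$ being dense in $V$, which is the hypothesis of the paper's uniqueness lemma and holds in both period applications. Granting it, your proof goes through with these changes:
\begin{enumerate}
\item Choose $Y^\circ\subseteq Y\cap V_0$.
\item Remove from $Z$ the closure of $\pi\bigl((Y\cap V_0)\setminus Y^\circ\bigr)$, obtaining $Z^{\circ\circ}$.
\item Take $U=(\pi|_{V_0})^{-1}(Z^{\circ\circ})$.
\end{enumerate}
Because $V_0$ is $G$-stable, every $y\in(G\cdot p)\cap Y$ lies in $Y\cap V_0$ over $Z^{\circ\circ}$, hence in $Y^\circ$. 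The fibers of $\rho$ on $U_p\subseteq V_0$ are then genuinely intersections of single orbits with $U_p$.
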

\begin{proof}
    Again by Rosenlicht's theorem, there is a variety whose function field is $\C(Y)^W$ and a dominant rational map from $Y$ to this variety whose fibers are $W$-orbits in $Y$; since $\C(V)^G = \C(Y)^W$ by hypothesis, we may as well take this variety to be $Z$ and the map $Y \dashto Z$ to be $\pi|_Y$. Since $\pi$ and $\pi|_Y$ are dominant, their images each contain a dense Zariski open subset of $Z$. Let $U'$ be the intersection of these dense open sets minus the branch locus of the cover $Y \dashto Z$. Then $U = \pi^{-1}(U')$ is a nonempty Zariski open set in $V$, and by construction the $G$-orbit of each point in $U$ intersects $Y$, proving the first claim. For the second claim, take any such $p$ and $y$. Since $Y \dashto Z$ is a Galois cover and thus \'etale away from its branch locus, there is an analytic neighborhood $U_y$ of $y$ for which $\pi|_{U_y}$ is a local biholomorphism into $Z$. Taking $U_p = \pi^{-1}(\pi(U_y))$ and $\rho = \pi|_{U_y}^{-1} \circ \pi$, we obtain the second claim. The final statement is immediate because the fibers of $\pi$ are $G$-orbits.
\end{proof}

\begin{lemma}[Analytic lifting lemma] \label{lem:analytic-lifting}
    Let $V$ be a representation of reductive $G$ and let $(Y, W)$ have the Galois restriction property. Let $p$ and $y$ be as in the previous lemma. Then for any holomorphic function $\varphi$ defined on an analytic neighborhood of $y$ in $Y$, there is a  holomorphic function $\tilde{\varphi}$ defined on an analytic neighborhood $U_p$ of $p$ in $V$ with the property that $\tilde{\varphi}$ is constant on the intersections of $G$-orbits with $U_p$ and factors through $\varphi$.
\end{lemma}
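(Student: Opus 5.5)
The natural approach is to compose $\varphi$ with the holomorphic map $\rho$ furnished by Lemma \ref{lem:analytic-mapping}. Fix $p$ in the Zariski open set $U$ of that lemma and $y \in (G \cdot p) \cap Y$. The lemma gives analytic neighborhoods $U_p$ of $p$ in $V$ and $U_y$ of $y$ in $Y$, together with a holomorphic map $\rho : U_p \to U_y$ satisfying $\pi \circ \rho = \pi$, whose fibers are the intersections of $G$-orbits with $U_p$.

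Given $\varphi$ holomorphic on some analytic neighborhood $N$ of $y$ in $Y$, first shrink $U_y$ to $U_y \cap N$. Then replace $U_p$ by $\rho^{-1}(U_y \cap N)$. This is still an open neighborhood of $p$, because $\rho$ is continuous and $\rho(p)$ lies in the neighborhood. One point needs care here: we must know $\rho(p) = y$. I would check this from the construction in the proof of Lemma \ref{lem:analytic-mapping}, where $\rho = \pi|_{U_y}^{-1} \circ \pi$. Since $p$ and $y$ lie in the same $G$-orbit, $\pi(p) = \pi(y)$, and so $\rho(p) = \pi|_{U_y}^{-1}(\pi(y)) = y$.

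With these shrinkings in place, define $\tilde\varphi = \varphi \circ \rho$ on the new $U_p$. It is holomorphic as a composition of holomorphic maps, and it factors through $\varphi$ by construction. It is constant on each fiber of $\rho$. Those fibers are exactly the intersections of $G$-orbits with $U_p$, so $\tilde\varphi$ is constant on each such intersection.

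The main obstacle is making sure the shrinking does not break the conclusions of Lemma \ref{lem:analytic-mapping}. The fiber description of $\rho$ restricts to open subsets, since fibers of $\rho$ restricted to $\rho^{-1}(U_y')$ are the old fibers intersected with that set, which is the same as $G$-orbits intersected with the new $U_p$. The identity $\pi \circ \rho = \pi$ is also local and survives restriction. For completeness I would also note that $\pi$ is defined as a morphism near $p$, because $p \in U = \pi^{-1}(U')$ lies in the domain of definition of the rational map. This makes all compositions genuinely holomorphic on the neighborhoods.
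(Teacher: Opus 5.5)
Your proposal is correct and is the same argument as the paper's, which simply sets $\tilde{\varphi} = \varphi \circ \rho$ with $\rho$ the map from Lemma \ref{lem:analytic-mapping}. Your extra steps fill in details the paper leaves implicit: shrinking $U_y$ to the domain of $\varphi$, pulling back to $U_p$, and checking $\rho(p) = y$ via $\pi(p) = \pi(y)$.
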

\begin{proof}
    Let $U_p$ and $U_y$ be the analytic neighborhoods of $p$ in $V$ and $y$ in $Y$, and $\rho : U_p \to U_y$ the holomorphic map of the analytic mapping lemma. Then $\tilde{\varphi} = \varphi \circ \rho$ clearly satisfies the desired properties.
\end{proof}

\begin{remark}
    Denoting by $\pi$ the morphism $V \to V \sslash G$, Luna showed in \cite{Luna-fonctions-differentiables} that for any analytic open set $\Omega \subseteq V \sslash G$, the pullback map $\pi^*$ from the ring of analytic functions on $\Omega$ to the ring of $G$-invariant analytic functions on $\pi^{-1}(\Omega) \subseteq V$ is an isomorphism. Our Lemmas \ref{lem:analytic-mapping} and \ref{lem:analytic-lifting} are similar in spirit, but seem to improve upon Luna's result in two ways. First, our result allows for the finite group $W$ to enter the picture whereas Luna considered functions factoring through $V \sslash G$ on the nose. The second difference is that our lemmas apply to analytic functions defined on a much wider class of analytic open sets in $V$, as shown in the following example. This is because Luna's theorem breaks down in the presence of unstable orbits, and thus functions on $V \sslash G$ are generally quite far from capturing all the $G$-invariant functions on $V$. The invariant field is a much better object for parametrizing $G$-orbits, and working on the level of the Galois restriction property avoids such issues.
\end{remark}

\begin{example}
    Consider the action of $G = \C^\times$ on $V = \C^2$ where $c \cdot (z_1, z_2) = (c z_1, c z_2)$. Here every orbit is unstable, $V \sslash G$ is a point, and Luna's theorem applies only to $G$-invariant analytic functions defined everywhere on $V$. Analyzing the orbit structure of $V$, one sees that the only such functions are constant. However, there are plenty of nonconstant $G$-invariant analytic functions defined on open sets in $V$ whch can be captured by Lemmas \ref{lem:analytic-mapping} and \ref{lem:analytic-lifting}. For example, consider the subvariety $Y = \{z_2 = 1\}$ with the Galois restriction property. Any locally defined $G$-invariant analytic function on $Y$ can be lifted to an analytic function on any simply connected analytic open set $U \subset V$ which does not contain the $z_1$-axis by simply precomposing with the map $\rho : U \to Y$, $(z_1, z_2) \mapsto (\frac{z_1}{z_2}, 1)$.
\end{example}

To deduce our invariant-theoretic period formulas, we will use Lemmas \ref{lem:analytic-mapping} and \ref{lem:analytic-lifting} to build a function on $V$ with prescribed values on a subvariety and transformation properties under the $\hat{G}$-action, then utilize the following lemma to argue that it must in fact be a period.
\begin{lemma}[Uniqueness lemma] \label{lem:uniqueness-lemma}
    Fix a tautological system $\tau$ on a Zariski open set $U$ of a representation $V$ of a connected reductive group $G$, whose symmetry operators encode equivariance properties under the $G$-action on $V$. Let $Y$ be a subvariety of $U$ such that the set $G \cdot Y$ comprising $G$-translates of $Y$ is dense in $U$. Suppose $\varphi$ and $\psi$ are two analytic functions on $U$, both annihilated by the symmetry operators of $\tau$, whose values agree on $Y$. Then $\varphi = \psi$ everywhere on $U$.
\end{lemma}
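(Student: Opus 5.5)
The plan is to reduce to the difference $h = \varphi - \psi$. Since the symmetry operators of $\tau$ are linear differential operators, $h$ is annihilated by them, and $h|_Y = 0$; it suffices to show $h \equiv 0$ on $U$. The key point is that the symmetry operators encode infinitesimal equivariance. Concretely, for each $\xi \in \g$ there is a character-type constant $\beta(\xi)$ (coming from the $\hat{G}$-equivariance data) such that the operator $Z_\xi + \beta(\xi)$ annihilates the functions in question. Here $Z_\xi$ denotes the vector field on $V$ generated by $\xi$. So $h$ satisfies the first-order system $Z_\xi h = -\beta(\xi) h$ for all $\xi \in \g$.

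Next I integrate this system along orbits. Fix $y \in Y$ and consider $g \mapsto h(g \cdot y)$ for $g$ in the connected component of the set $\{g \in G : g\cdot y \in U\}$ containing the identity, locally near $e$. The differential equation says this function satisfies a linear first-order ODE along every one-parameter subgroup $t \mapsto \exp(t\xi)$. Explicitly,
\[\frac{d}{dt} h(\exp(t\xi) \cdot x) = -\beta(\xi)\, h(\exp(t\xi)\cdot x).\]
Hence $h(\exp(t\xi)\cdot x) = e^{-t\beta(\xi)} h(x)$ as long as the path stays in $U$ (with $h$ multivalued, this is understood along the path by analytic continuation). Since $G$ is connected, it is generated by any neighborhood of $e$, so products of exponentials reach all of $G$. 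Applying this with $x = y$ and $h(y) = 0$, uniqueness for linear ODEs gives $h(g\cdot y) = 0$ for all $g$ in a neighborhood $N$ of $e$. Because $h$ is analytic, this propagates along any path in $G$ whose image lies in $U$.

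Thus $h$ vanishes on the set $N \cdot Y \subseteq G\cdot Y$. I then argue that $N\cdot Y$ (for $N$ an open neighborhood of the identity) contains a nonempty analytic open subset of $U$, or at least a set that is not contained in any proper analytic subvariety near some point. Density of $G \cdot Y$ in $U$ means the action map $G \times Y \to U$ is dominant. Hence at a generic point $(e', y)$, and by translating at $(e, y)$ for generic $y$, its differential is surjective. Then the holomorphic implicit function theorem shows $N \cdot Y$ contains an open neighborhood of $y$ in $U$. An analytic function vanishing on a nonempty open subset of the connected open set $U$ vanishes identically, by the identity theorem. This gives $\varphi = \psi$.

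I expect the main obstacle to be the generic submersivity step, together with the care needed about multivaluedness and connectedness of $U$. The dominance of $G\times Y \to U$ only gives surjectivity of the differential on a dense open set of $G\times Y$. One must check it occurs at points $(e,y)$, which follows by $G$-equivariance of the action map: the rank at $(g,y)$ equals the rank at $(e,y)$ after translating by $g$. I would handle multivaluedness by working on a simply connected open set and using analytic continuation, noting that $U$ is connected as a Zariski open subset of $V$.
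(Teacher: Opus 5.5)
Your proof is correct, but it reaches ``$\varphi=\psi$ on $U$'' from ``$\varphi-\psi$ is killed by the symmetry operators and vanishes on $Y$'' by a different route from the paper's.

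\textbf{The paper's route.} The paper argues globally. Since $G$ is connected, the orbits are connected, so the symmetry operators propagate the value at $p$ along all of $G\cdot p$. Hence $\varphi=\psi$ on all of $G\cdot Y$, and density of $G\cdot Y$ plus continuity finishes.

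\textbf{Your route.} You integrate $Z_\xi h=-\beta(\xi)h$ only over a small neighborhood $N$ of $e$. You replace the density step by two things: generic smoothness of the action map $a\colon G\times Y\to U$, and the equivariance trick moving a submersive point $(g,y)$ to $(e,y)$. Together these show $N\cdot Y$ contains an open set, and the identity theorem on the connected set $U$ concludes.

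\textbf{What each buys.}
\begin{itemize}
\item The paper's argument is shorter, but it silently uses three things: that $U$ is $G$-stable; that solutions continue single-valuedly along whole orbits (exactly what the paper later flags as delicate in the $\GL_3(\C)$ discussion); and that Zariski density of the constructible set $G\cdot Y$ gives analytic density.
\item Your argument avoids all three. It also does not need $G$ connected: connectedness of $U$ does that work, so your remark that products of exponentials generate $G$ is superfluous.
\item Your argument still works when agreement is known only on an open piece of $Y$, which is how the lemma is actually used in the proof of Theorem~\ref{thm:periods-hypersurfaces}. The reason is that the $y$ with $da_{(e,y)}$ surjective form a nonempty Zariski open subset of a dominating component of $Y$.
\item The price of your route is the appeal to generic smoothness in characteristic zero.
\end{itemize}
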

\begin{proof}
    The symmetry operators dictate how a function locally transforms along orbits under the action of $G$. Since $G$ is connected, so are the $G$-orbits in $U$. Now, because $\varphi$ is annihilated by the symmetry operators, the value of $\varphi$ at a point $p$ therefore determines its values along the entire orbit $G \cdot p$. As $G \cdot Y$ is dense in $U$, the claim follows.
\end{proof}

Throughout the discussion of the following period integral problems, we will make extensive use of work by Matsumoto, Sasaki, and Yoshida regarding reduction of hypergeometric systems to a particular closed subvariety. This result is really quite remarkable, because in general the question of restricting PDE systems or $\D$-modules to a subvariety is quite complex. We anticipate that the Galois restriction property plays a crucial role in this reduction process, and we plan to investigate it more deeply in the future.

\subsection{Double covers of $\P^{n-1}$} \label{sec:double-covers-Pn}

We begin by considering the family of CY double covers of $\P^{n-1}$ branched along $2n$ hyperplanes in general position. Such a double cover is specified by a generic choice of $2n$ sections of $\O(1)$, or by a $n \times 2n$ complex matrix $Z = [z_{ij}]$ with no zero minors; call the space of such matrices by $M_{n \times 2n}^o(\C)$. This is a Zariski open subvariety of the representation $M_{n \times 2n}(\C)$ of $G = \SL_n(\C) \times (\C^\times)^{2n}$, where $\SL_n(\C)$ acts by left multiplication on $Z$ and the $i$-th copy of $\C^\times$ scales the $i$-th column of $Z$. Geometrically, the $\SL_n(\C)$ action is the action on $\bigoplus_{i=1}^{2n} H^0(\O(1))$ induced by automorphisms of $\P^{n-1}$, while the torus action simply rescales the sections defining the branch locus, whence it is easy to see that the group action identifies isomorphic double covers and preserves $M_{n \times 2n}^o(\C)$.

Adapting the Poincar\'e residue procedure defined by Lian and Yau for Calabi-Yau complete intersections \cite{Lian_2012} to the case of branched covers, one obtains the canonical volume form
\[\omega(Z) = \frac{\sum_{i=1}^n (-1)^i w_i dw_1 \wedge \cdots \wedge \hat{dw_i} \wedge \cdots \wedge dw_n}{\sqrt{\prod_{k=1}^{2n} z_{1,k} w_1 + \cdots + z_{n, k} w_n}}\]
on each double cover, where $w_1, \dots, w_n$ are the homogeneous coordinates on $\P^{n-1}$ and the hat denotes omission. The resulting period $\Pi(Z) = \int_\gamma \omega(Z)$ is a solution to the tautological system generated by the operators
\[\begin{cases}
    \dee_{z_{ip}} \dee_{z_{jq}} - \dee_{z_{iq}} \dee_{z_{jp}} & 1 \leq i < j \leq n,\ 1 \leq p < q \leq 2n \\
    \sum_{j=1}^{2n} z_{ij} \dee_{z_{mj}} + \delta_{im} & 1 \leq i, m \leq n \\
    \sum_{i=1}^n z_{ij} \dee_{z_{ij}} + \frac{1}{2} & 1 \leq j \leq 2n.
\end{cases}\]
The operators on the second line express the $\SL_n(\C)$-invariance of the periods, while the operators on the third line express the fact that the periods are homogeneous of degree $-\frac{1}{2}$ with regard to each $\C^\times$ action, as can be seen from the expression for $\omega(Z)$.

The tautological system for this family of double covers is a special case of the Aomoto-Gelfand hypergeometric system $E(k, n)$ studied by Matsumoto, Sasaki, and Yoshida in \cite{MSY-aomoto-gelfand-3-6} for the case $k = 3$, $n = 6$ and \cite{MSY-monodromy} for general $k$ and $n$. This is a PDE system on $M_{k \times n}^o(\C)$ with an action by $G = \SL_k(\C) \times (\C^\times)^n$. In the latter article, they proved three key statements about this system and the geometry of the parameter space:
\begin{enumerate}
    \item The subvariety $Y \subset M_{k \times n}^o(\C)$ consisting of matrices of the form
    \[\begin{bmatrix}
		1 & & & & 1 & 1 & \hdots & 1 \\
		& 1 & & & 1 & x_{2,k+2} & \hdots & x_{2,n} \\
		& & \ddots & & \vdots & \vdots & \ddots & \vdots \\
		& & & 1 & 1 &  x_{k,k+2} & \hdots & x_{k,n}
	\end{bmatrix}\]
    intersects each $G$-orbit exactly once. In other words, the set $G \cdot Y$ comprising $G$-translates of $Y$ is all of $M_{k \times n}^o(\C)$.
    \item The system $E(k, n)$ has a reduction to a PDE system $E'(k, n)$ on $Y$ generated by second-order operators, in the sense that the restriction of a solution of $E(k, n)$ to $Y$ is a solution of $E'(k, n)$.
    \item There is an explicit power series solution of $E'(k, n)$, unique up to scalar, which is holomorphic in a neighborhood of
    \[\begin{bmatrix}
        1 & & & & 1 & 1 & \hdots & 1 \\
        & 1 & & & 1 & 0 & \hdots & 0 \\
        & & \ddots & & \vdots & \vdots & \ddots & \vdots \\
        & & & 1 & 1 & 0 & \hdots & 0
    \end{bmatrix}\]
    in $Y$ and extends to a multivalued meromorphic function on $Y$.
\end{enumerate}
(Matsumoto, Sasaki, and Yoshida considered an action by the slightly larger group $\GL_k(\C)\ \times (\C^\times)^n$, but every $M \in \GL_k(\C)$ factors as the product of $(\det M)^{-1/k} M \in \SL_k(\C)$ and $(\det M)^{1/k} I_k$, the latter of which acts on $M_{k \times n}^o(\C)$ identically to $((\det M)^{1/k}, \dots, (\det M)^{1/k}) \in (\C^\times)^n$.) Our goal will be to lift the explicit solution of $E'(n, 2n)$ from $Y$ to a solution of $E(n, 2n)$ on all of $M_{n \times 2n}^o(\C)$. Because we will work with rational functions, the restriction to a Zariski open makes no difference, and we can consider rational functions on all of $M_{n \times 2n}(\C)$. To do so, we must first understand the invariant theory of $M_{n \times 2n}(\C)$ as a representation of $G$.

\begin{lemma} \label{lem:invariant-rational-functions-as-cross-ratios}
    Let $f \in \C(M_{n \times 2n}(\C))^{G}$. Then $f$ can be expressed as a rational function of ``cross-ratios" of the form
	\[\frac{\Delta_{i_1 \cdots i_{n-2} j_1 j_2} \Delta_{i_1 \cdots i_{n-2} \ell_1 \ell_2}}{\Delta_{i_1 \cdots i_{n-2} j_1 \ell_2} \Delta_{i_1 \cdots i_{n-2} \ell_1 j_2}}\]
	where $i_1, \dots, i_{n-2}, j_1, j_2, \ell_1, \ell_2$ are distinct integers between $1$ and $2n$ and $\Delta_{i_1 \cdots i_n}$ denotes the minor of the $i_1, \dots, i_n$ columns of $Z$.
\end{lemma}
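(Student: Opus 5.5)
The plan is to use the Matsumoto--Sasaki--Yoshida slice $Y$ (fact (1) above) together with a Galois-restriction style argument. Concretely, I would show that the restriction map $\C(M_{n \times 2n}(\C))^G \to \C(Y)$ is injective. I would then show that every coordinate function $x_{ij}$ on $Y$ is the restriction of an explicit rational function of cross-ratios. Since the cross-ratios are themselves $G$-invariant, this will express $f$ as a rational function of cross-ratios.

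\textbf{Step 1 (injectivity).} By fact (1), $G \cdot Y$ contains the dense set $M^o_{n\times 2n}(\C)$. A $G$-invariant rational function $f$ whose restriction to $Y$ vanishes therefore vanishes on a dense set, so $f = 0$. Hence $f$ is determined by $f|_Y \in \C(Y) = \C(x_{ij} \st 2\le i\le n,\ n+2\le j\le 2n)$.

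\textbf{Step 2 (computing cross-ratios on $Y$).} I would evaluate specific minors at a point of $Y$, where the first $n$ columns are the identity and column $n+1$ is all ones. Any minor built from all of $e_1,\dots,e_n$ except $e_a, e_b$, together with two further columns $u, v$, equals up to sign the $2\times 2$ minor of $u,v$ in rows $a,b$. Fix $i \ge 2$ and $j \ge n+2$, and take the index set $\{1,\dots,n\}\setminus\{1,i\}$ with $j_1=1$, $j_2=n+1$, $\ell_1=i$, $\ell_2=j$. The resulting cross-ratio
\[\frac{\Delta_{\cdots 1\, (n+1)}\,\Delta_{\cdots i\, j}}{\Delta_{\cdots 1\, j}\,\Delta_{\cdots i\,(n+1)}}\]
then reduces, up to sign, to a ratio of entries of the small matrices built from $e_1, e_i$, the all-ones column, and column $j$. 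Its value is of the form $x_{ij}$, or $x_{ij}/(x_{ij}-1)$, or a similar Möbius expression, depending on the exact choice. In each case $x_{ij}$ is recovered rationally from it. I would adjust the choice of indices until the expression is exactly $x_{ij}$ (or $1 - x_{ij}$), checking the signs carefully; this is routine.

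\textbf{Step 3 (conclusion).} Let $c_{ij}$ be the cross-ratio from Step 2, so $c_{ij}|_Y = \phi(x_{ij})$ for an invertible Möbius map $\phi$. Write $f|_Y = R(x_{ij})$ and set $F = R(\phi^{-1}(c_{ij}))$. Then $F$ is a $G$-invariant rational function on $M_{n\times 2n}(\C)$, since each cross-ratio is homogeneous of degree $0$ in every column and $\SL_n$-invariant. Moreover $F|_Y = f|_Y$, so Step 1 gives $f = F$.

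The main obstacle is Step 1's reliance on fact (1), namely that $G\cdot Y$ is dense. If one did not want to cite it, one would reprove it by normalizing a generic $Z$: act by $\GL_n$, absorbed into $\SL_n \times$ torus as in the parenthetical remark above, to make the first $n$ columns the identity. Then use the column scalings together with diagonal matrices to make column $n+1$ and the first row all ones. This requires the relevant minors to be nonzero, which holds generically.
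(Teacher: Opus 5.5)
Your proof is correct, but it is not the route the paper takes for this lemma.

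\textbf{The paper's route.} The paper argues invariant-theoretically:
it uses that $\C(M_{n \times 2n})^{\SL_n(\C)}$ is the fraction field of the ring generated by the minors;
it writes $f$ as a ratio of products of minors and uses $(\C^\times)^{2n}$-invariance to balance the column indices;
it multiplies by suitable cross-ratios to eliminate the indices $1, \dots, n-1$ in turn;
it finally shows by counting exponents that the remaining ratio of the minors $\Delta_{n \cdots \hat{i} \cdots (2n)}$ is constant.
The slice $Y$ enters only in the next lemma, Lemma \ref{lem:G-invt-monomials-gens}, whose proof is essentially your Steps 1 and 3.

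\textbf{How your route compares.} You have in effect proved Lemma \ref{lem:G-invt-monomials-gens} directly and obtained the present statement as a by-product. With your choice of indices, $c_{ij}$ restricts to $1/x_{ij}$, the reciprocal of the paper's $f_{ij}$. Your route is shorter and produces the transcendence basis immediately. It needs neither the first fundamental theorem for $\SL_n(\C)$ nor the torus-weight bookkeeping on monomials in minors. Its one geometric input is the density of $G \cdot Y$, which your normalization argument checks elementarily. The paper's route, by contrast, is intrinsic, symmetric in the columns, and independent of any choice of slice.

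\textbf{Details to make explicit.} The restriction $f|_Y$ only makes sense if $Y$ is not contained in the locus where $f$ is undefined. In the paper, this is exactly what the present lemma supplies in the proof of Lemma \ref{lem:G-invt-monomials-gens}. Since you bypass it, argue directly: the domain of definition of a $G$-invariant rational function is a nonempty $G$-stable open set. It therefore meets the dense set $G \cdot Y$, and hence meets the irreducible variety $Y$. Similarly, $R(\phi^{-1}(c_{ij}))$ is a well-defined rational function because the $c_{ij}$ are algebraically independent, as their restrictions to $Y$ are.
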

\begin{proof}
    Since $\SL_n(\C)$ has no nontrivial characters it follows that $\C(M_{n \times 2n})^{\SL_n(\C)} = \Frac( \C[M_{n \times 2n}]^{\SL_n(\C)} )$, and it is well-known that this invariant ring is generated by the minors of $Z$. Thus, all that remains is to show that the cross-ratios generate the subfield of $\C(M_{n \times 2n})^{\SL_n(\C)}$ comprising functions which are invariant under the torus action. Without loss of generality, we can assume that $f$ is written as a reduced fraction of products of the minors, in which case $(\C^\times)^{2n}$-invariance implies that each subscript between $1$ and $2n$ appears the same number of times on the numerator and denominator of $f$. 
    
    Multiplying $f$ by a suitable power of 
	\[\frac{\Delta_{i_1 \cdots i_{n-2} 1 (2n)} \Delta_{i_1 \cdots i_{n-2} \ell_1 \ell_2}}{\Delta_{i_1 \cdots i_{n-2} 1 \ell_2} \Delta_{i_1 \cdots i_{n-2} \ell_1 (2n)}},\]
	we can ensure that $\Delta_{i_1 \cdots i_{n-2} 1 (2n)}$ does not appear in the resulting product when written in reduced form. Continuing in this way and replacing $2n$ by $2n-1, \dots, n+1$, the result is that $\Delta_{1 \cdots n}$ is the only minor with subscript $1$ appearing in the product of $f$ and cross-ratios when written in reduced form. But $\Delta_{1 \cdots n}$ is the only minor with subscript $1$ which could appear in the resulting product, which is still $(\C^\times)^{2n}$-invariant, so the product must contain no instances of the subscript $1$ when written in reduced form. Continuing, we repeat this process to eliminate all instances of the subscripts $2, \dots, n-1$ (which is possible because there are $n+2$ integers between $n-1$ and $2n$).

    Let $g$ denote the resulting product of $f$ with the cross-ratios identified in the previous paragraph, so that $g$ is a ratio of products of the minors $\Delta_{n \cdots \hat{i} \cdots (2n)}$ (here a hat on a subscript indicates omission). We will show that $g$ is a complex constant, from which the claim immediately follows. For each $j$ between $n$ and $2n$, invariance under the $j$-th $\C^\times$ action implies that
	\[\sum_{\ell_1, \dots, \ell_{n-1}} p_{j \ell_1 \cdots \ell_{n-1}} = 0\]
    where $p_{i_1 \cdots i_n}$ is the power of $\Delta_{i_1 \cdots i_n}$ appearing in the fraction $g$, and the sum ranges over distinct $\ell_1, \dots, \ell_{n-1}$ between $n$ and $2n$ and not equal to $j$. The difference of the equations corresponding to the indices $j = k$ and $j = \ell$ reads
	\[p_{n \cdots k \cdots \hat{\ell} \cdots (2n)} - p_{n \cdots \hat{k} \cdots \ell \cdots (2n)} = 0,\]
	implying that all the powers $p_{j \ell_1 \cdots \ell_{n-1}}$ are equal. This is only possible if they are all zero, which proves that $g$ is a constant.
\end{proof}

\begin{lemma} \label{lem:G-invt-monomials-gens}
	$\C(M_{n \times 2n}(\C))^{G}$ is generated over $\C$ by the cross-ratios 
	\[f_{ij}(Z) = \frac{\Delta_{1 \cdots \hat{i} \cdots n j} \Delta_{2 \cdots n (n+1)}}{\Delta_{2 \cdots n j} \Delta_{1 \cdots \hat{i} \cdots n (n+1)}},\]
	where $i$ is an integer between $2$ and $n$, and $j$ is an integer between $n+2$ and $2n$.
\end{lemma}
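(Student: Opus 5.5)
The plan is to restrict to the Matsumoto--Sasaki--Yoshida slice $Y \subset M_{n \times 2n}^o(\C)$ described just before Lemma \ref{lem:invariant-rational-functions-as-cross-ratios} (with $k = n$ and $2n$ columns). The argument would show that (i) restriction $\res_{V \to Y} : \C(M_{n\times 2n}(\C))^G \to \C(Y)$ is injective, and (ii) the restrictions of the $f_{ij}$ already generate $\C(Y)$. Then $\C(f_{ij}) \subseteq \C(M_{n\times 2n}(\C))^G$ restricts onto the image of the whole invariant field, and injectivity forces equality. This would also show that $Y$ has the Galois restriction property with trivial $W$.

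First I check that each $f_{ij}$ is $G$-invariant. Put $S = \{2,\dots,n\}\setminus\{i\}$, a set of $n-2$ indices. Then
\[f_{ij} = \frac{\Delta_{S\,1\,j}\,\Delta_{S\,i\,(n+1)}}{\Delta_{S\,i\,j}\,\Delta_{S\,1\,(n+1)}}\]
up to the sign coming from reordering columns. This is exactly one of the cross-ratios of Lemma \ref{lem:invariant-rational-functions-as-cross-ratios}, with $(j_1,j_2,\ell_1,\ell_2) = (1,j,i,n+1)$. It is $\SL_n(\C)$-invariant because it is built from minors. It is invariant under each column-scaling $\C^\times$ because every column index occurs equally often in the numerator and the denominator.

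Second, for injectivity I use fact (1) of Matsumoto--Sasaki--Yoshida: $G\cdot Y$ is all of $M^o_{n\times 2n}(\C)$, which is Zariski dense in $M_{n\times 2n}(\C)$. Suppose a $G$-invariant rational function vanishes identically on $Y$ (or is regular there and restricts to $0$). Then it vanishes on $G\cdot Y$, hence everywhere, so its restriction is $0$. One point needs a remark: a $G$-invariant rational function might have $Y$ inside its polar locus. But its polar divisor is $G$-stable, so it cannot contain $Y$ without containing the dense set $G\cdot Y$. Hence restriction is a well-defined field homomorphism, and in particular injective.

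Third, I compute $f_{ij}|_Y$ directly. On $Y$ the first $n$ columns are $e_1,\dots,e_n$, column $n+1$ is the all-ones vector, and column $j\ge n+2$ is $(1,x_{2j},\dots,x_{nj})^T$. Expanding along the standard basis columns gives the following values, each up to a sign depending only on $(i,j)$:
\begin{itemize}
\item $\Delta_{2\cdots n\,(n+1)} = \pm 1$, since only the first entry of the ones column survives;
\item $\Delta_{2\cdots n\, j} = \pm 1$, since the first entry of column $j$ is $1$;
\item $\Delta_{1\cdots\hat{i}\cdots n\,(n+1)} = \pm 1$, since only the $i$-th entry survives;
\item $\Delta_{1\cdots\hat{i}\cdots n\, j} = \pm x_{ij}$, for the same reason.
\end{itemize}
Hence $f_{ij}|_Y = \epsilon_{ij} x_{ij}$ with $\epsilon_{ij} \in \{\pm1\}$. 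The coordinate ring of $Y$ is the polynomial ring in the $(n-1)^2$ variables $x_{ij}$, for $2\le i\le n$ and $n+2\le j\le 2n$. So $\C(Y) = \C(x_{ij}) = \C(f_{ij}|_Y)$.

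Combining the three steps: restriction embeds $\C(M_{n\times 2n}(\C))^G$ into $\C(Y)$, and the subfield $\C(f_{ij})$ already maps onto $\C(Y)$. The chain $\C(f_{ij}) \subseteq \C(M_{n\times 2n}(\C))^G \hookrightarrow \C(Y)$ therefore has surjective composite, so both inclusions are equalities. The main obstacle is the well-definedness and injectivity in the second step, i.e. correctly invoking the orbit statement (1) and handling the polar locus. The sign bookkeeping in the third step is routine and does not affect generation.
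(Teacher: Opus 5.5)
Your proposal is correct and is essentially the paper's argument. You restrict to the Matsumoto--Sasaki--Yoshida slice $Y$, observe that $f_{ij}|_Y$ is the coordinate $x_{ij}$ (your sign $\epsilon_{ij}$ is in fact $+1$), and use that $G\cdot Y$ fills out $M^o_{n\times 2n}(\C)$ to transport $f|_Y = g(f_{ij}|_Y)$ back to $f = g(f_{ij})$; the only cosmetic difference is that you justify well-definedness of restriction by $G$-stability of the polar locus, where the paper instead appeals to Lemma~\ref{lem:invariant-rational-functions-as-cross-ratios}.
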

\begin{proof}
    Let $f(Z)$ be a $G$-invariant rational function, and consider its restriction $f|_Y$, which is well-defined because the cross-ratios appearing in Lemma \ref{lem:invariant-rational-functions-as-cross-ratios} each restrict to a rational function of the coordinates of $Y$. Then $f|_Y$ can be expressed as a rational function $g(X)$ of the coordinate functions $x_{ij}$ on $Y$, and since $f_{ij}$ restricts to $x_{ij}$ on $Y$, we have the equation $f|_Y(X) = g(f_{ij}|_Y(X))$. Since $f$ and each $f_{ij}$ are $G$-invariant and $Y$ contains a point in every $G$-orbit, it follows that $f(Z) = g(f_{ij}(Z))$ on all of $M_{n \times 2n}(\C)$.
\end{proof}

Note that in the case $n = 2$, there is only one cross-ratio as defined in the Lemma, $f_{24}$, which is exactly the familiar cross-ratio of four points on $\P^1$. This cross-ratio is well-known as the only projective invariant of four points. In the language of invariant theory, Lemma \ref{lem:G-invt-monomials-gens} is saying that $Y$ is a subvariety of $M_{n \times 2n}(\C)$ with the Galois restriction property, and $\C(Y) / \C(M_{n \times 2n}(\C))^{G}$ is the trivial extension. For our purposes, this means that the map $\rho$ defined in the analytic mapping lemma, Lemma \ref{lem:analytic-mapping}, is single-valued and well-defined on the entire Zariski open $M_{n \times 2n}^o$, sending
\[Z \mapsto \begin{bmatrix}
    1 & & & & 1 & 1 & \hdots & 1 \\
    & 1 & & & 1 & f_{2,n+2}(Z) & \hdots & f_{2,2n}(Z) \\
    & & \ddots & & \vdots & \vdots & \ddots & \vdots \\
    & & & 1 & 1 &  f_{n,n+2}(Z) & \hdots & f_{n,2n}(Z)
\end{bmatrix}.\]
Now we will leverage this map to apply the analytic lifting lemma, Lemma \ref{lem:analytic-lifting}, in order to produce a period formula on all of $M_{n \times 2n}^o$.

\begin{theorem}[Period formula for CY double covers of $\P^{n-1}$] \label{thm:periods-double-covers}
    The holomorphic period of the family of CY double covers of $\P^{n-1}$ is given on an analytic open set in $M_{n \times 2n}^o(\C)$ in terms of $\SL_n(\C)$-invariant functions by the power series
    \[\Pi(Z) = \left( \frac{\left( \prod_{i=n+2}^{2n} \Delta_{2 \cdots n i} \right) \left( \prod_{\{j_1, \dots, j_{n-2}\} \subseteq \{2, \dots, n\}} \Delta_{1 j_1 \cdots j_{n-2} (n+1)} \right) }{\left( \Delta_{1 2 \cdots n} \Delta_{2 \cdots n (n+1)} \right)^{n-2}} \right)^{-\frac{1}{2}} \sum_\ell A(\ell) f_{ij}(Z)^\ell,\]
    where $\ell = (\ell_{ij})$ ranges over $\Z_{\geq 0}$-valued multiindices with $2 \leq i \leq n$ and $n+2 \leq j \leq 2n$,
    \[A(\ell) = \frac{\prod_{j=n+2}^{2n} (\frac{1}{2}, \sum_{i=2}^n \ell_{ij}) \prod_{i=2}^n (\frac{1}{2}, \sum_{j=n+2}^{2n} \ell_{ij})}{(\frac{n}{2}, \sum_{i=2}^n \sum_{j=n+2}^{2n} \ell_{ij}) \prod_{i=2}^n \prod_{j=n+2}^{2n} (1, \ell_{ij})},\]
    and $(a, n) = \frac{\Gamma(a+n)}{\Gamma(a)}$. This function extends to a multivalued meromorphic function on $M_{n \times 2n}^o(\C)$ by analytic continuation.
\end{theorem}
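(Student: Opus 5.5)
The plan is to build a candidate function on $M_{n\times 2n}^o(\C)$ with the right transformation behaviour and the right values on $Y$, and then invoke the Uniqueness Lemma (Lemma \ref{lem:uniqueness-lemma}).

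\textbf{Step 1: the value on $Y$.} By the result of Matsumoto, Sasaki, and Yoshida, $\Pi|_Y$ solves the reduced system $E'(n,2n)$. That system has, up to scalar, a unique power series solution $\sum_\ell A(\ell)\, x^\ell$, holomorphic near the base point of $Y$ where all $x_{ij}=0$ for $2\le i\le n$, $n+2\le j\le 2n$. So, after choosing the cycle $\gamma$ suitably (and normalizing), I take $\Pi|_Y = \sum_\ell A(\ell)x^\ell$ on a neighbourhood of that point. To check that the coefficients are the stated $A(\ell)$, I would specialize the general $E(k,n)$ series to $k=n$, $n\mapsto 2n$, with all exponents $\tfrac12$ coming from the square root in $\omega(Z)$.

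\textbf{Step 2: the lift.} The extension $\C(Y)/\C(V)^G$ is trivial (Lemma \ref{lem:G-invt-monomials-gens}), so the map $\rho$ is the single-valued map $Z\mapsto(f_{ij}(Z))$. By Lemma \ref{lem:analytic-lifting}, $\tilde\varphi = \sum_\ell A(\ell) f_{ij}(Z)^\ell$ is holomorphic on an analytic open set, namely the preimage of the convergence polydisc. It is $G$-invariant, but the period is not: it transforms with weight $-\tfrac12$ under each column scaling. I therefore multiply by $h(Z)^{-1/2}$, where
\[h(Z) = \frac{\left( \prod_{i=n+2}^{2n} \Delta_{2 \cdots n i} \right) \left( \prod_{\{j_1, \dots, j_{n-2}\} \subseteq \{2, \dots, n\}} \Delta_{1 j_1 \cdots j_{n-2} (n+1)} \right)}{\left( \Delta_{1 2 \cdots n} \Delta_{2 \cdots n (n+1)} \right)^{n-2}} .\]
Two properties of $h$ are needed.
\begin{enumerate}
\item $h$ is $\SL_n(\C)$-invariant and homogeneous of degree $1$ in each column. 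This is a count of how often each column index occurs in numerator versus denominator.
\item $h\equiv 1$ on $Y$. Every minor appearing in $h$ has columns among $1,\dots,n+1$ except $\Delta_{2\cdots n i}$. On $Y$, $\Delta_{2\cdots n i}$ equals the $(1,i)$ entry up to sign, which is $1$, and the minors using column $n+1$ together with $n-1$ of the identity columns are $\pm1$.
\end{enumerate}
I will fix signs and branches of the square root locally. Then $\Pi(Z):=h^{-1/2}\tilde\varphi$ is annihilated by the second- and third-line symmetry operators. Concretely, $\SL_n$-invariance gives the first of these families (the $\delta_{im}$ accounts for the trace term, which is compatible with $\SL_n$ only after checking $i=m$), and the homogeneity gives the Euler operators. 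Moreover $\Pi$ agrees with the true period on $Y$.

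\textbf{Step 3: conclusion.} Since $G\cdot Y = M^o_{n\times 2n}(\C)$ by Matsumoto, Sasaki, and Yoshida, Lemma \ref{lem:uniqueness-lemma} applies on a connected analytic open set $U$ meeting $Y$ near the base point. It gives $\Pi = \int_\gamma\omega(Z)$ on $U$. The meromorphic continuation follows from that of the series on $Y$, composed with the rational map $\rho$ and with the algebraic factor $h^{-1/2}$.

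\textbf{Main obstacle.} The delicate point is the homogeneity bookkeeping for $h$, in particular verifying that column $1$ and the columns $2,\dots,n$ all get total degree exactly $1$ despite the exponent $n-2$. I would first test this on $n=2,3$ and adjust if needed. A secondary issue is applying the Uniqueness Lemma locally: it needs orbits through $U$ to meet $Y$ in the region where the series converges. This I would handle by taking $U=\rho^{-1}$ of the convergence neighbourhood, which is $G$-stable.
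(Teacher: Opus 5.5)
Your proposal matches the paper's proof essentially step for step. It uses the Matsumoto--Sasaki--Yoshida reduction and series on $Y$, the lift $\varphi(f_{ij}(Z))$ through the single-valued $\rho$, the prefactor $h^{-1/2}$, and the Uniqueness Lemma; the column count you flagged as the main obstacle does give degree $1$ in every column. One small slip: $h$ restricts to the constant $\pm1$ on $Y$, not necessarily $1$ (for $n=2$ it is $\Delta_{13}\Delta_{24}=-1$). This is harmless, since only constancy on $Y$ is used and the sign is absorbed into your normalizing scalar.
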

\begin{proof}
    The explicit solution of $E'(n, 2n)$ from \cite{MSY-monodromy} is a multivalued function defined in a neighborhood $U_\mathbf{0}$ of the point
    \[\mathbf{0} = \begin{bmatrix}
        1 & & & & 1 & 1 & \hdots & 1 \\
        & 1 & & & 1 & 0 & \hdots & 0 \\
        & & \ddots & & \vdots & \vdots & \ddots & \vdots \\
        & & & 1 & 1 & 0 & \hdots & 0
    \end{bmatrix}\]
    in $Y$ by $\varphi(X) = \sum_\ell A(\ell) X^\ell$ where $\ell = (\ell_{ij})$ and $A(\ell)$ are as in the statement of the theorem, and extended to the rest of $Y$ by analytic continuation. By combining the explicit form of $\rho$ with Lemma \ref{lem:analytic-lifting}, we see that $\varphi(f_{ij}(Z))$ is a $G$-invariant function on $M_{n \times 2n}^o(\C)$ which restricts to $\varphi(X)$ on $Y$. In particular, $\varphi(f_{ij}(Z))$ is given in $(G \cdot U_{\mathbf{0}}) \cap M_{n \times 2n}^o(\C)$ by the power series $\sum_\ell A(\ell) f_{ij}(Z)^\ell$ in the $f_{ij}(Z)$. However, while $\Pi(Z)$ is expected to be $\SL_n(\C)$-invariant, as described above, it is expected to be homogeneous of degree $-\frac{1}{2}$ in each $\C^\times$ action. 

    This discrepancy can be remedied by the introduction of a prefactor which is $\SL_n(\C)$-invariant, homogeneous of degree $-\frac{1}{2}$ in each $\C^\times$ action, and which is constant on $Y$ so as to preserve the restriction properties of $\varphi(f_{ij}(Z))$. We set
    \[P(Z) = \left( \frac{\left( \prod_{i=n+2}^{2n} \Delta_{2 \cdots n i} \right) \left( \prod_{\{j_1, \dots, j_{n-2}\} \subseteq \{2, \dots, n\}} \Delta_{1 j_1 \cdots j_{n-2} (n+1)} \right) }{\left( \Delta_{1 2 \cdots n} \Delta_{2 \cdots n (n+1)} \right)^{n-2}} \right)^{-\frac{1}{2}}.\]
    $P(Z)$ is clearly $\SL_n(\C)$-invariant, and each of the minors appearing in this expression takes value $\pm 1$ on $Y$. By counting the number of times each column index appears in the numerator and denominator, we can see that it is homogeneous of degree $-\frac{1}{2}$ with respect to the action of each $\C^\times$. By construction, the resulting expression
    \[\Pi(Z) = P(Z) \varphi(f_{ij}(Z))\]
    is annihilated by the symmetry operators of the tautological system $E(n, 2n)$ and restricts to $\varphi$ on $Y$ (perhaps up to scalar). The same is true of the holomorphic period of this family thanks to the reduction result of Matsumoto, Sasaki, and Yoshida. By Lemma \ref{lem:uniqueness-lemma}, the function $\Pi$ therefore is exactly the holomorphic period of this double cover family.
\end{proof}

\subsubsection{Example: elliptic curves as double covers of $\P^1$} \label{sec:elliptic-curves-double-covers}
In the case $n = 2$, the family of elliptic curves given as CY double covers of $\P^1$ is called the Legendre family. In fact, every elliptic curve arises in this way. Beginning with the action of $G = \SL_2(\C) \times (\C^\times)^4$ on the parameter space $M_{2 \times 4}^o(\C)$, it is well-known that the classical cross-ratio of four points in $\P^1$, given in terms of the matrix entries by $\frac{\Delta_{14} \Delta_{23}}{\Delta_{13} \Delta_{24}}$, generates $\C(M_{2 \times 4}(\C))^{G}$. From this, it is clear to see that the subvariety $Y = \{\begin{bsmallmatrix}
	1 & 0 & 1 & 1 \\
	0 & 1 & 1 & \lambda
\end{bsmallmatrix}\}$ has the Galois restriction property with trivial Galois group, because the restriction of the cross-ratio to $Y$ is exactly the coordinate function $\lambda$. Geometrically, $Y$ describes the family of elliptic curves cut out by the cubic equation $y^2 = x (x-1) (x - \lambda)$. On $Y$, the reduced Aomoto-Gelfand system $E'(2, 4)$ is given by the single operator
\[\theta^2 - \lambda \left( \theta + \frac{1}{2} \right)^2\]
where $\theta = \lambda \dee_\lambda$. This is the Gauss hypergeometric equation, and the Picard-Fuchs equation of this one-parameter subfamily described in \cite{HLY2019}. The explicit solution to the reduced Aomoto-Gelfand system from \cite{MSY-monodromy} in this case is the Gauss hypergeometric function
\[\varphi(\lambda) = \sum_{n=0}^\infty \frac{\Gamma(n + \frac{1}{2})^2}{\Gamma(\frac{1}{2})^2 \Gamma(n+1)^2} \lambda^n = \leftindex_2{F}_1 \left( \frac{1}{2}, \frac{1}{2}; 1; \lambda \right)\]
which was also recovered in \cite{HLY2019}. The $\SL_2(\C)$-invariant and $(\C^\times)^4$-homogeneous prefactor described above takes the form
\[P(Z) = (\Delta_{13} \Delta_{24})^{-\frac{1}{2}}\]
in this case. Putting the pieces together, we conclude that the holomorphic period of the Legendre family is the function on $M_{2 \times 4}^o(\C)$ defined by
\[\Pi(Z) = (\Delta_{13} \Delta_{24})^{-1/2} \sum_{n=0}^\infty \frac{\Gamma\left( n + \frac{1}{2} \right)^2}{\Gamma\left( \frac{1}{2} \right)^2 \Gamma(n + 1)^2} \left( \frac{\Delta_{14} \Delta_{23}}{\Delta_{13} \Delta_{24}} \right)^n\]
on a neighborhood where this power series converges, and extended to the rest of $M_{2 \times 4}^o(\C)$ by multivalued analytic continuation.

\subsubsection{Example: K3 surfaces of Picard number 16 as double covers of $\P^2$}
In the case $n = 3$, the double covers of $\P^2$ branched along six lines are K3 surfaces with $A_1$ singularities above the intersections of pairs of lines. Blowing up the singularities yields a family of smooth K3 surfaces of Picard number 16. Using the action of $\SL_3(\C) \times (\C^\times)^6$ on $M_{3 \times 6}^o(\C)$, every $3 \times 6$ matrix with nonzero minors can uniquely be put into the form
\[\begin{bmatrix}
    1 & 0 & 0 & 1 & 1 & 1 \\
    0 & 1 & 0 & 1 & x_{25} & x_{26} \\
    0 & 0 & 1 & 1 & x_{35} & x_{36}
\end{bmatrix},\]
a process sometimes referred to as gauge fixing \cite{hosono2019k3}. The subvariety of $M_{3 \times 6}(\C)$ comprising such matrices is the $Y$ identified by Matsumoto, Sasaki, and Yoshida \cite{MSY-monodromy}, and has the Galois restriction property with trivial Galois group. Namely, the coordinates $x_{25}$, $x_{35}$, $x_{26}$, and $x_{36}$ on $Y$ are the restrictions of the $G$-invariant rational functions
\[\frac{\Delta_{135} \Delta_{234}}{\Delta_{235} \Delta_{134}},\ \frac{\Delta_{125} \Delta_{234}}{\Delta_{235} \Delta_{124}},\ \frac{\Delta_{136} \Delta_{234}}{\Delta_{236} \Delta_{134}},\ \text{and}\ \frac{\Delta_{126} \Delta_{234}}{\Delta_{236} \Delta_{124}}\]
on $M_{3 \times 6}(\C)$, respectively. Compactifications of $Y$ were also studied in \cite{HLY2019} in order to identify large complex structure limits of this family. The power series solution to the reduced Aomoto-Gelfand system $E(3, 6)$ is
\[\varphi(X) = \sum_N \frac{\Gamma(n_{25} + n_{35} + \frac{1}{2}) \Gamma(n_{26} + n_{36} + \frac{1}{2}) \Gamma(n_{25} + n_{26} + \frac{1}{2}) \Gamma(n_{35} + n_{36} + \frac{1}{2})}{2 \Gamma(\frac{1}{2})^3 \Gamma(n_{25} + n_{35} + n_{26} + n_{36} + \frac{3}{2}) \Gamma(n_{25} + 1) \Gamma(n_{35} + 1) \Gamma(n_{26} + 1) \Gamma(n_{36} + 1)} x_{25}^{n_{25}} x_{35}^{n_{35}} x_{26}^{n_{26}} x_{36}^{n_{36}}\]
where the sum ranges over $2 \times 2$ matrices $N = \begin{bsmallmatrix}
    n_{25} & n_{26} \\
    n_{35} & n_{36}
\end{bsmallmatrix}$ with entries in $\Z_{\geq 0}$, and the $\SL_3(\C)$-invariant and $(\C^\times)^6$-homogeneous prefactor for this family is
\[P(Z) = \left( \frac{\Delta_{235} \Delta_{236} \Delta_{124} \Delta_{134}}{\Delta_{123} \Delta_{234}} \right)^{-\frac{1}{2}}.\]
Hence, the holomorphic period of this family of K3 surfaces is given explicitly by
\begin{align*}
    \Pi(Z) &= \left( \frac{\Delta_{235} \Delta_{236} \Delta_{124} \Delta_{134}}{\Delta_{123} \Delta_{234}} \right)^{-\frac{1}{2}} \\
    &\quad \times \sum_N {\Bigg [} \frac{\Gamma(n_{25} + n_{35} + \frac{1}{2}) \Gamma(n_{26} + n_{36} + \frac{1}{2}) \Gamma(n_{25} + n_{26} + \frac{1}{2}) \Gamma(n_{35} + n_{36} + \frac{1}{2})}{2 \Gamma(\frac{1}{2})^3 \Gamma(n_{25} + n_{35} + n_{26} + n_{36} + \frac{3}{2}) \Gamma(n_{25} + 1) \Gamma(n_{35} + 1) \Gamma(n_{26} + 1) \Gamma(n_{36} + 1)} \\
    &\qquad \qquad \quad \times \left( \frac{\Delta_{135} \Delta_{234}}{\Delta_{235} \Delta_{134}} \right)^{n_{25}} \left( \frac{\Delta_{125} \Delta_{234}}{\Delta_{235} \Delta_{124}} \right)^{n_{35}} \left( \frac{\Delta_{136} \Delta_{234}}{\Delta_{236} \Delta_{134}} \right)^{n_{26}} \left( \frac{\Delta_{126} \Delta_{234}}{\Delta_{236} \Delta_{124}} \right)^{n_{36}} {\Bigg ]}
\end{align*}
on the domain of convergence of this power series, and is defined on the rest of $M_{3 \times 6}^o(\C)$ by multivalued analytic continuation.

\subsection{Elliptic curves in $\P^2$}
Next, we consider the cubic family of elliptic curves, expressed as smooth hypersurfaces in $\P^2$. These hypersurfaces are given by a nondegenerate section of $\O(3)$, which can be written as a cubic in the homogeneous coordinates $x_1$, $x_2$, $x_3$ of $\P^2$ of the form
\[\sigma = z_{111} x_1^3 + z_{112} x_1^2 x_2 + z_{113} x_1^2 x_3 + z_{122} x_1 x_2^2 + z_{123} x_1 x_2 x_3 + z_{133} x_1 x_3^2 + z_{222} x_2^3 + z_{223} x_2^2 x_3 + z_{233} x_2 x_3^2 + z_{333} x_3^3.\]
The collection of such nondegenerate cubics is a Zariski open set $V^o$ in the representation $V = H^0(\O(3))$ of $G = \GL_3(\C)$, where the action of $G$ is induced by its action on $\P^2$.

In this scenario, the canonical volume form on the hypersurface $\{\sigma = 0\}$ is given in terms of the coordinates $z_{ijk}$ by
\[\omega(Z) = \frac{x_1 dx_2 \wedge dx_3 - x_2 dx_1 \wedge dx_3 + x_3 dx_1 \wedge dx_2}{z_{111} x_1^3 + z_{112} x_1^2 x_2 + \cdots + z_{333} x_3^3}\]
by the Poincar\'e residue method of Lian and Yau \cite{Lian_2012}. Thus the period integrals
\[\Pi(Z) = \int_\gamma \omega(Z)\]
are $\SL_3(\C)$-invariant and homogeneous of degree $-3$ with respect to the diagonal copy of $\C^\times \subset \GL_3(\C)$.

We will once again lift a formula for the period integral on a closed subvariety to a global period formula on the entire parameter space using the Galois restriction property. Remarkably, we will see that the period is a power series involving terms expressible in radicals. The end goal is to leverage what we already know about the Legendre family of elliptic curves from the previous section, but first we must touch upon the invariant theory of the $G$-action on $V$. Since $\SL_3(\C)$ has no nontrivial characters, we have $\C(V)^{\SL_3(\C)} = \Frac(\C[V]^{\SL_3(\C)})$, and it is a classical result that this invariant ring is a polynomial ring generated by the Aronhold invariants $S$ and $T$ of degrees $4$ and $6$ respectively (see Section \ref{sec:introduction}). The $j$-invariant of the elliptic curve corresponding to a nondegenerate cubic in $V$ is given in terms of $S$ and $T$ by the rational function $J = \frac{S^3}{T^2 - 4 S^3}$, which generates $\C(V)^G$ as an extension of $\C$.

Now, it is a well-known fact that every elliptic curve over $\C$ can be put into Weierstrass form $a x_1^3 + b x_1^2 x_3 + c x_1 x_3^2 + d x_3^3 - x_2^2 x_3 = 0$ using the action of $\GL_3(\C)$ introduced above \cite{silverman-elliptic-curves}. From this form, it is then simple to put the elliptic curve into Legendre form $x_1 (x_1 - x_3) (x_1 - \lambda x_3) - x_2^2 x_3 = x_1^3 + (-\lambda - 1) x_1^2 z_3 + \lambda x_1 x_3^2 - x_2^2 x_3 = 0$ by another $\GL_3(\C)$-transformation. We take $Y$ to be the subvariety of $V$ comprising those Legendre-form cubics; explicitly, $Y$ is the one-dimensional subvariety
\[\{z_{111} = 1,\ z_{113} = -\lambda - 1,\ z_{133} = \lambda,\ z_{223} = -1,\ z_{112} = z_{122} = z_{123} = z_{222} = z_{233} = z_{333} = 0\}\]
with coordinate $\lambda$ and the set $G \cdot Y$ comprising its $G$-translates is all of $V^o$.

However, a key difference between the subvariety $Y$ for this family and the subvariety $Y_L \subset M_{2 \times 4}(\C)$ for the Legendre family identified in the previous section is that, while $Y_L$ contained exactly one point in each $G$-orbit of $M_{2 \times 4}^o(\C)$, the intersection of $Y$ with a $G$-orbit in $V$ is not a single point but in fact (generically) six points, with an action of the symmetric group $S_3$. More precisely, if $y \in Y$ is a point with $\lambda = \lambda_0$, then
\[(G \cdot y) \cap Y = \left\{ \lambda = \lambda_0,\ 1 - \lambda_0,\ \frac{1}{\lambda_0},\ \frac{\lambda_0}{\lambda_0 - 1},\ \frac{1}{1 - \lambda_0},\ \frac{\lambda_0 - 1}{\lambda_0} \right\}\]
and the action of $S_3$ on this set is simply the action on the cross-ratio of a quadruple of (distinct) points in $\P^1$ familiar from enumerative geometry and the study of moduli problems (see, for example, \cite{kock-quantum-cohomology}). It can also be observed in the family of double covers of $\P^1$ as follows. The cross-ratio of a quadruple of points in $\P^1$ represented by a matrix in $M_{2 \times 4}^o$ is the bottom right entry of its orbit's representative in $Y_L$. Consider a point $\begin{bsmallmatrix}
    1 & 0 & 1 & 1 \\
    0 & 1 & 1 & \lambda_0
\end{bsmallmatrix} \in Y_L$ and permute the first three columns by an element of $S_3$; then the representative in $Y_L$ of the orbit of this column-permuted matrix is precisely the corresponding element in the above set.

Now we will show that $Y$ has the Galois restriction property, that is, that $\res_{V \to Y} : \C(V)^G \to \C(Y)$ is a Galois extension. Under our choice of normalization for the Aronhold invariants, the restrictions of $S$ and $T$ to $Y$ are 
\[\lambda^2 - \lambda + 1 \quad \text{and} \quad 2 \lambda^3 - 3 \lambda^2 - 3 \lambda + 2\]
respectively, thus
\[\res_{V \to Y}(J) = \frac{(\lambda^2 - \lambda + 1)^3}{-27 \lambda^2 (\lambda-1)^2}.\]
Clearing denominators and abusing notation by denoting $\res_{V \to Y} J$ simply as $J$, we have the equation
\[\lambda^6 - 3 \lambda^5 + (27J + 6) \lambda^4 + (-54J - 7) \lambda^3 + (27J + 6) \lambda^2 - 3 \lambda + 1 = 0\]
showing that $\C(Y) / \C(V)^G$ is a degree-six extension. It is easy to check that the expression $\frac{(\lambda^2 - \lambda + 1)^3}{-27 \lambda^2 (\lambda-1)^2}$ is invariant under the $W = S_3$-action on $Y$, therefore $\C(V)^G \subseteq \C(Y)^W$. Since $\C(Y) / \C(Y)^W$ is automatically Galois, as the base is the fixed field of a group action, and because the degree of $\C(Y) / \C(V)^G$ and the order of the group $W$ are equal, we conclude that $\C(V)^G = \C(Y)^W$. In other words, $\C(Y) / \C(V)^G$ is Galois with Galois group $W = S_3$, and so $(Y, W)$ has the Galois restriction property.

\begin{lemma} \label{lem:lambda-radical-expression}
    The sextic minimal polynomial of $\lambda$ over $\C(J)$ is solvable in radicals as
    \[\lambda = \frac{1 \pm \sqrt{1 + 4 \sqrt[3]{\frac{729J (54 J^2 + 18 J + 1) \pm 729 J \sqrt{4J + 1}}{2}}}}{2}.\]
\end{lemma}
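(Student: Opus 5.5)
The plan is to follow the Galois structure of $\C(Y)/\C(V)^G$ with group $W = S_3$ and solve the sextic through the tower $\C(J) \subset \C(Y)^{C_2} \subset \C(Y)$. Here $C_2 \subset S_3$ is the subgroup generated by the involution $\lambda \mapsto 1-\lambda$, and $\C(Y)^{C_2}$ is a cubic extension of $\C(J)$. A primitive element of this intermediate field is
\[u = \lambda(\lambda - 1) = \lambda^2 - \lambda,\]
which is invariant under $\lambda \mapsto 1 - \lambda$. Since $\lambda^2 - \lambda + 1 = u + 1$, the relation $\res_{V \to Y}(J) = \frac{(\lambda^2 - \lambda + 1)^3}{-27 \lambda^2 (\lambda-1)^2}$ becomes the cubic
\[(u+1)^3 + 27 J u^2 = 0.\]
I would check that this cubic is irreducible over $\C(J)$, so that the degrees multiply to $2 \cdot 3 = 6$ as they must. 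Once $u$ is known, $\lambda$ is recovered from the quadratic $\lambda^2 - \lambda - u = 0$:
\[\lambda = \frac{1 \pm \sqrt{1 + 4u}}{2}.\]
This already accounts for the outer square root in the claimed formula. It remains to show that $u$ equals the cube-root expression appearing under that square root.

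To solve the cubic for $u$, I will use Cardano's method. Substituting $s = 1/u$ turns the equation into $(s+1)^3 + 27 J s = 0$. Shifting by $r = s + 1$ gives the depressed cubic
\[r^3 + 27 J r - 27 J = 0.\]
Its Cardano discriminant is $\left(\tfrac{27J}{2}\right)^2 + (9J)^3 = \tfrac{729 J^2 (1+4J)}{4}$. This is where the $\sqrt{4J+1}$ in the statement comes from, and Cardano gives
\[r = \sqrt[3]{\tfrac{27J}{2}\left(1 + \sqrt{1+4J}\right)} + \sqrt[3]{\tfrac{27J}{2}\left(1 - \sqrt{1+4J}\right)},\]
with the two cube roots constrained so that their product is $-9J$. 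Then $u = 1/(r-1)$. The last step is to rationalize $1/(r-1)$, using the product relation between the cube roots, into a single radical expression. I would then compare it with $\sqrt[3]{\frac{729J(54J^2+18J+1) \pm 729J\sqrt{4J+1}}{2}}$, by cubing both sides and checking the identity in $\C(J)(\sqrt{4J+1})$.

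I expect this last reconciliation to be the main obstacle. Cardano naturally produces a sum of two cube roots, so collapsing it to a single cube root is not automatic. It depends on the normalization and choice of branches, and possibly on a different Tschirnhaus substitution for the cubic resolvent. If the direct rationalization fails, I will try two alternatives. The first is to work with the symmetric function $u^3$, or with $u/(u+1)^{\,\cdot}$ combinations that diagonalize the $C_3$-action, and find an element $\theta$ of $\C(Y)^{C_2}(\sqrt{4J+1})$ with $\theta^3 \in \C(J)(\sqrt{4J+1})$. By Kummer theory over the $C_3$-extension $\C(Y)(\sqrt{4J+1})/\C(J)(\sqrt{4J+1})$, such a $\theta$ exists, and I would compute $\theta^3$ directly as a resolvent.

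As a final check, I would substitute the resulting formula back into the sextic $\lambda^6 - 3\lambda^5 + (27J+6)\lambda^4 + \cdots$. I would also verify, for instance by series expansion near $J = 0$ (which corresponds to $\lambda$ near a root of $\lambda^2 - \lambda + 1$), that the branches of the formula give all six roots permuted by $S_3$. If the single-cube-root form proves to be off by a rational factor, I would correct the statement accordingly. The substantive claim, solvability in radicals via the $C_2 \subset S_3$ tower, is unaffected.
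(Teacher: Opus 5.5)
Your reduction is exactly the paper's. With $u=\lambda(\lambda-1)$ you get the cubic $(u+1)^3+27Ju^2=0$ and $\lambda=\frac{1\pm\sqrt{1+4u}}{2}$. Your Cardano computation (via $s=1/u$, $r=s+1$, discriminant $\frac{729J^2(1+4J)}{4}$) is also correct.

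The problem is the step you flagged as the main obstacle. Identifying $u$ with $\sqrt[3]{\frac{729J(54J^2+18J+1)\pm 729J\sqrt{4J+1}}{2}}$ cannot be done, because that identity is false. This is precisely the step the paper's proof passes over with ``via the cubic formula''. Your own planned check near $J=0$ exposes it. At $J=0$ the cubic is $(u+1)^3=0$, so $u=-1$ and $\lambda=e^{\pm i\pi/3}$. The displayed radicand vanishes there, giving $u=0$ and $\lambda\in\{0,1\}$. But $(\lambda^2-\lambda+1)^3+27J\lambda^2(\lambda-1)^2$ equals $1$ at $\lambda=0,1$ for every $J$.

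This is not a normalization issue, so your fallback of correcting ``by a rational factor'' will not work either. The identity $4(\lambda^2-\lambda+1)^3-27\lambda^2(\lambda-1)^2=(\lambda+1)^2(2\lambda-1)^2(\lambda-2)^2$ shows $\sqrt{4J+1}\in\C(\lambda)$. Moreover, $\C(J,\sqrt{4J+1})$ is the fixed field of $\sigma:\lambda\mapsto 1/(1-\lambda)$. If $(cu)^3\in\C(J,\sqrt{4J+1})$ for some nonzero $c$ in that field, then $\sigma(u)/u$ would be a cube root of unity. But $\sigma(u)=\lambda/(1-\lambda)^2$, so $\sigma(u)/u=(\lambda-1)^{-3}$, which is not constant.

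More generally, the depressed cubic has nonzero linear coefficient, so $u$ has nonzero components in both nontrivial eigenspaces of $\sigma$. Hence any Kummer generator $\theta$ you find gives $u=a+c\,\theta+d\,\theta^{-1}$ with $a,c,d\in\C(J,\sqrt{4J+1})$ and $c,d\neq 0$. It never gives the form $u=a+\sqrt[3]{R}$.

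What can be proved is solvability in radicals with a corrected formula, and your computation already supplies one: $u=1/(r-1)$, with $r$ your Cardano sum (cube roots multiplying to $-9J$). Equivalently, depressing the cubic in $u$ directly gives
\[u=-(1+9J)+c_++c_-,\qquad c_\pm^3=\frac{-27J(54J^2+18J+1)\pm 27J\sqrt{4J+1}}{2},\qquad c_+c_-=9J(2+9J).\]
This gives $u=-1$ at $J=0$. At $J=-\tfrac14$ it gives the roots $-\tfrac14$ (from $\lambda=\tfrac12$) and $2$ (from $\lambda=-1,2$).

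The paper's radicand with sign $\pm$ is exactly $-27c_\mp^3$. So its expression equals $-3c_\mp$ for a suitable choice of cube root: one rescaled Cardano summand, with the shift $-(1+9J)$ and the other summand dropped.

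The lemma should therefore be restated with the corrected formula. The same radical is reused in Theorem \ref{thm:periods-hypersurfaces} and needs the same correction. The rest of your plan goes through unchanged: irreducibility of the cubic, the degree count $2\cdot 3=6$, and substituting back into the sextic.
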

\begin{proof}
    In fact, this sextic factors as the composition of a quadratic and a cubic. To see this, set $\alpha = \lambda (\lambda - 1)$, so that $J = \frac{(\alpha + 1)^3}{-27 \alpha^2}$. This rearranges to a cubic in $\alpha$ over $\C(J)$ whose solution is given explicitly as
    \[\alpha = \sqrt[3]{\frac{729J (54 J^2 + 18 J + 1) \pm 729 J \sqrt{4J + 1}}{2}}\]
    via the cubic formula. The equation $\alpha = \lambda (\lambda - 1)$ is a quadratic in $\lambda$ with explicit solution $\lambda = \frac{1 \pm \sqrt{1 + 4\alpha}}{2}$, proving the claim.
\end{proof}
The preceding lemma exhibits $\lambda$ as a multivalued function of $J$ on $Y$. From the point of view of the analytic mapping lemma, Lemma \ref{lem:analytic-mapping}, a generic $G$-orbit in $V^o$ intersects $Y$ in six points, each corresponding to a choice of a branch of this multivalued function. Fixing a choice of a branch in a neighborhood of $p \in V^o$ defines the map $\rho$ of the analytic mapping lemma.

Under the canonical Poincar\'e residue techinque, the periods of cubic hypersurfaces parametrized by $Y$ exactly coincide with the periods of the double covers in the Legendre family parametrized by $Y_L$, as the following lemma explains. Hence, we will use the Galois restriction property to lift the explicit solution $\varphi(\lambda)$ of the reduced Aomoto-Gelfand system $E'(2, 4)$ to the entire space $V$.

\begin{lemma} \label{lem:periods-legendre-form-hypersurfaces-p2}
	The period integrals of cubic hypersurfaces parametrized by $Y$ are exactly the period integrals of double covers of $\P^1$ parametrized by the subvariety $Y_L$ described in Section \ref{sec:elliptic-curves-double-covers}.
\end{lemma}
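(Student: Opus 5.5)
The approach is to compute both period integrals directly by taking residues, and to reduce each to the same one-variable integral $\int \frac{dx}{\sqrt{x(x-1)(x-\lambda)}}$ over matching cycles.

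\emph{Cubic side.} On $Y$ the section is
\[\sigma_\lambda = x_1(x_1 - x_3)(x_1 - \lambda x_3) - x_2^2 x_3.\]
Work in the affine chart $x_3 = 1$ with coordinates $x = x_1$, $y = x_2$. The form $\omega(Z)$ then restricts to $\frac{dx \wedge dy}{f(x,y)}$, up to sign, where $f(x,y) = x(x-1)(x-\lambda) - y^2$. The canonical volume form on $\{\sigma_\lambda = 0\}$ is the Poincar\'e residue of this form. It is computed as $\frac{dx}{\partial f/\partial y} = -\frac{dx}{2y}$ with $y = \sqrt{x(x-1)(x-\lambda)}$. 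So for any cycle $\gamma$ on the cubic curve, the period is $-\frac{1}{2}\int_\gamma \frac{dx}{\sqrt{x(x-1)(x-\lambda)}}$.

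\emph{Double-cover side.} Take $n=2$ and the point $\begin{bsmallmatrix}1&0&1&1\\0&1&1&\lambda\end{bsmallmatrix}\in Y_L$. Here
\[\omega(Z) = \frac{w_2\,dw_1 - w_1\,dw_2}{\sqrt{w_1 w_2 (w_1+w_2)(w_1+\lambda w_2)}}.\]
In the chart $w_2 = 1$ with $w_1 = t$ this becomes $\frac{dt}{\sqrt{t(t+1)(t+\lambda)}}$, up to sign. Substituting $t = -x$ turns it into $\pm i\,\frac{dx}{\sqrt{x(x-1)(x-\lambda)}}$.

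\emph{Matching cycles.} The substitution above is an isomorphism between the double cover of $\P^1$ branched at $0, -1, -\lambda, \infty$ and the Legendre curve, because both are the double cover branched at the points $0, 1, \lambda, \infty$ in the $x$-coordinate. This isomorphism identifies $H_1$ of the two curves, and under it the canonical forms agree up to a nonzero constant ($\pm 2i$) that does not depend on $\lambda$. Hence the two period lattices coincide as functions of $\lambda$ up to that universal scalar. Since periods are only taken up to scalar normalization in this setting, and the uniqueness lemma argument is insensitive to scalars, this proves the lemma. Both families also satisfy the same Gauss hypergeometric operator $\theta^2 - \lambda(\theta+\frac12)^2$, which serves as a consistency check.

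\emph{Main obstacle.} The delicate step is bookkeeping rather than substance. One must check that the residue normalization of Lian and Yau in $\P^2$ and the branched-cover normalization produce the same sign and constant conventions, and that the chart change at $x_3 = 0$ (the flex point at infinity) introduces no extra factor. I would settle this by checking that both forms are regular and nonvanishing holomorphic $1$-forms on the smooth compact curve, so that they differ by a constant, and then fix that constant by evaluating the forms locally at one point.
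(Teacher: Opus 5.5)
Your proposal is correct and follows essentially the same route as the paper. The paper also computes the Poincar\'e residue of $dx_1\wedge dx_2/f$ in the chart $x_3\neq 0$, obtains $-\tfrac{1}{2}\,dx_1/\sqrt{x_1(x_1-1)(x_1-\lambda)}$, and compares it with the double-cover form up to a $\lambda$-independent scalar. Your substitution $t=-x$ (with the accompanying factor $\pm i$) spells out a step the paper skips: the matrix in $Y_L$ actually gives branch points $0,-1,-\lambda,\infty$ rather than $0,1,\lambda,\infty$.
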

\begin{proof}
	The period integral of a cubic hypersurface parametrized by $Y$ is
	\[\int_\gamma \frac{x_1 dx_2 \wedge dx_3 - x_2 dx_1 \wedge dx_3 + x_3 dx_1 \wedge dx_2}{x_1^3 + (-\lambda - 1) x_1^2 x_3 + \lambda x_1 x_3^2 - x_2^2 x_3}\]
	where $\lambda$ is the coordinate on $Y$. Following the recipe of Lian and Yau \cite{Lian_2012}, the integrand is a meromorphic form on $\P^2$ with a pole along the cubic curve $E_\lambda$ defined by $x_1^3 + (-\lambda - 1) x_1^2 x_3 + \lambda x_1 x_3^2 - x_2^2 x_3 = 0$, and the expression for the period is meant to represent the integral of the Poincar\'e residue of this form over a $1$-cycle $\gamma$ in $E_\lambda$. In the chart $U = \{x_3 \neq 0\} \cong \C^2$ of $\P^2$, $E_\lambda$ is the curve $x_1^3 + (-\lambda - 1) x_1^2 + \lambda x_1 - x_2^2 = 0$ and the integrand becomes 
	\[\omega = \frac{dx_1 \wedge dx_2}{x_1^3 + (-\lambda - 1) x_1^2 + \lambda x_1 - x_2^2}.\]
	Writing $f(x_1,x_2) = x_1^3 + (-\lambda - 1) x_1^2 + \lambda x_1 - x_2^2$, the Poincar\'e residue of $\omega$ is the restriction to $E_\lambda$ of the $1$-form $\rho = a(x_1,x_2) dx_1 + b(x_1,x_2) dx_2$ on $U$ which satisfies $\omega = \frac{df}{f} \wedge \rho$. It is straightforward to compute
	\[\frac{df}{f} \wedge \rho = \frac{-2x_2 a(x_1,x_2) - (3x_1^2 + 2 (-\lambda - 1) x_1 + \lambda) b(x_1,x_2)}{f} dx_1 \wedge dx_2,\]
	and therefore the equation $\frac{df}{f} \wedge \rho = \frac{dx_1 \wedge dx_2}{f}$ implies that 
    \[a(x_1,x_2) = \frac{1}{-2x_2} - \frac{3x_1^2 + 2 (-\lambda - 1) x + \lambda}{2x_2} b(x_1,x_2).\]
    Now, pulling back $\rho$ along the closed embedding $\iota : E_\lambda \to U$, $\iota(x_1) = (x_1, \sqrt{x_1^3 + (-\lambda - 1) x_1^2 + \lambda x_2})$, we get the formula
    \begin{align*}
        \iota^* \rho &= \left( a ( x_1, \sqrt{x_1^3 + (-\lambda - 1) x_1^2 + \lambda x_1} ) + \frac{3x_1^2 + 2 (-\lambda - 1) x_1 + \lambda}{2x_2} b ( x_1, \sqrt{x_1^3 + (-\lambda - 1) x_1^2 + \lambda x_1} ) \right) dx_1 \\
        &= \frac{1}{-2 \sqrt{x_1^3 + (-\lambda - 1) x_1^2 + \lambda x_1}} dx_1 = -\frac{1}{2} \frac{dx_1}{\sqrt{x_1 (x_1-1) (x_1 - \lambda)}}.
    \end{align*}
	Up to scalar, this is exactly the integrand of the period $\int_\gamma \frac{x_1 dx_2 - x_2 dx_1}{\sqrt{x_1 x_2 (x_1 - x_2) (x_1 - \lambda x_2)}}$ of the double cover of $\P^1$ corresponding to $\begin{bsmallmatrix}
		1 & 0 & 1 & 1 \\
		0 & 1 & 1 & \lambda
	\end{bsmallmatrix} \in Y_L$, expressed in the chart $x_2 \neq 0$ of $\P^1$.
\end{proof}

\begin{theorem}[Period formula for elliptic curves in $\P^2$] \label{thm:periods-hypersurfaces}
    The period of the family of elliptic curves in $\P^2$ is given on an analytic open set in $V^o$ in terms of $\SL_3(\C)$-invariant functions by the power series
    \begin{align*}
        \Pi(Z) &= \sqrt[12]{\frac{729 (T^4 + 10 S^3 T^2 - 2 S^6) \pm 729  T \sqrt{(T^2 - 4 S^3)^3}}{2}} \\
        &\quad \times \sum_{n=0}^\infty \frac{\Gamma\left( n + \frac{1}{2} \right)^2}{\Gamma\left( \frac{1}{2} \right)^2 \Gamma(n + 1)^2} \left( \frac{1 \pm \sqrt{1 + 4 \sqrt[3]{\frac{729J (54 J^2 + 18 J + 1) \pm 729 J \sqrt{4J + 1}}{2}}}}{2} \right)^n.
    \end{align*}
    This function extends to a multivalued meromorphic function on $V^o$ by analytic continuation.
\end{theorem}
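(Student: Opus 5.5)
The plan is to copy the argument used for Theorem \ref{thm:periods-double-covers}, with the finite Galois group $W = S_3$ now playing a role through the choice of branch. Start from the explicit solution of the reduced system $E'(2,4)$ on $Y_L$, namely $\varphi(\lambda) = \leftindex_2{F}_1(\frac{1}{2},\frac{1}{2};1;\lambda)$. By Lemma \ref{lem:periods-legendre-form-hypersurfaces-p2}, up to a constant this is the restriction of the cubic period $\Pi$ to the Legendre subvariety $Y \subset V$. We have shown that $(Y, S_3)$ has the Galois restriction property. Lemma \ref{lem:analytic-mapping} therefore gives the following for a generic $p \in V^o$ and $y \in (G\cdot p)\cap Y$ with $\lambda(y)$ near $0$: a local holomorphic map $\rho : U_p \to U_y$ that is constant on the intersections of $G$-orbits with $U_p$. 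Concretely, $\rho$ is the branch of the radical expression in Lemma \ref{lem:lambda-radical-expression}, composed with $J = S^3/(T^2-4S^3)$. Choose the branch with $\lambda \to 0$; it exists since $\lambda=0$ is a root when $J=\infty$, and then $\lambda$ is small near $J = \infty$. Lemma \ref{lem:analytic-lifting} then gives a $G$-invariant holomorphic function $\tilde\varphi = \varphi\circ\rho$ on $U_p$. It is the displayed power series in the radical expression for $\lambda$.

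Next I fix the homogeneity. $\Pi$ is $\SL_3(\C)$-invariant and homogeneous of degree $-3$ under the scalar $\C^\times$ (the volume form scales like $\sigma^{-1}$, and $t \in \C^\times \subset \GL_3$ scales $\sigma$ by $t^{-3}$, whereas scaling $\sigma$ by $c$ scales $\Pi$ by $c^{-1}$). So I need a prefactor $P(Z)$ built from $S,T$ that is $\SL_3$-invariant, has the correct weight, and is constant on $Y$. This parallels $P(Z)$ in Theorem \ref{thm:periods-double-covers}. The natural candidate is the twelfth root in the statement. Its radicand $Q$ has weighted degree $24$ in the $z$'s (since $T^4$, $S^6$, $S^3T^2$ and $T\sqrt{(T^2-4S^3)^3}$ all have degree $24$). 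Hence $Q^{1/12}$ scales by $c^{2}$ under $\sigma \mapsto c\sigma$. I would recheck the convention-dependent exponent, since the required scaling is $c^{-1}$ in $\sigma$, and adjust the sign of the exponent if necessary. Then I would verify that $Q$ restricted to $Y$ is constant, using $S|_Y = \lambda^2-\lambda+1$, $T|_Y = 2\lambda^3-3\lambda^2-3\lambda+2$, and $T^2-4S^3 = -27\lambda^2(\lambda-1)^2$.

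Here is where I expect the main obstacle. $Q|_Y$ is constant only after choosing the branch of $\sqrt{(T^2-4S^3)^3}$ compatibly with the branch defining $\rho$. That branch is $\pm 27\sqrt{-27}\,\lambda^3(\lambda-1)^3$ on $Y$, and the cancellation against $T^4+10S^3T^2-2S^6$ must be checked by expanding in $\lambda$. If exact constancy fails, the fallback is to replace $P$ by $Q^{1/12}$ divided by its own restriction pulled back through $\rho$. That quotient is still a $G$-invariant multiplicative correction, and the resulting function still restricts to $\varphi$ (up to scalar) on $Y$.

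Finally I conclude by uniqueness. Set $\Pi(Z) = P(Z)\,\varphi(\rho(Z))$. By construction it is annihilated by the symmetry operators of the tautological system for cubics, since it is $\SL_3$-invariant and has the correct $\C^\times$-weight. It also agrees with the holomorphic period on $Y$ near $\lambda = 0$. The set $G\cdot Y$ is dense in $V^o$, in fact equal to $V^o$, and $G$ is connected. Lemma \ref{lem:uniqueness-lemma} therefore identifies $\Pi$ with the holomorphic period on the analytic open set $G\cdot U_y \cap U_p$. Multivalued analytic continuation along $V^o$, with branches permuted by $S_3$ and the radicals, gives the final statement.
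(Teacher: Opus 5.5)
Your route is the paper's: the Legendre slice $Y$ with $W=S_3$, the analytic mapping and lifting lemmas, a weight-correcting prefactor, then Lemma \ref{lem:uniqueness-lemma}. However, the prefactor step is a genuine gap, and the mismatch you noticed is decisive, not a convention issue.

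\emph{The displayed prefactor cannot work.} The radicand $Q$ has weight $24$ in the $z_{ijk}$, so $Q^{\pm 1/12}$ has weight $\pm 2$. Flipping the sign of the exponent never gives the required weight $-1$. Nor is $Q|_Y$ constant for either sign choice. On $Y$ you have $T^2-4S^3=-27\alpha^2$ with $\alpha=\lambda(\lambda-1)$, so $\sqrt{(T^2-4S^3)^3}=\pm 81\sqrt{3}\,i\,\alpha^3$ and $Q|_Y$ is a polynomial in $\lambda$. It equals $27^3$ at $\lambda=0$ and $-81^3$ at $\lambda=-1$; at both points $T\sqrt{(T^2-4S^3)^3}$ vanishes, so the sign choice is irrelevant. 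Your fallback (dividing by the restriction pulled back through $\rho$) repairs constancy but not weight. The resulting function is therefore not annihilated by the scaling (Euler) symmetry operator, and the uniqueness lemma does not apply. In fact, since $\varphi(\rho(Z))$ has weight $0$, the displayed $\Pi$ has weight $2$ on every branch. It cannot agree with the nonzero weight-$(-1)$ period on any open set.

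\emph{What is missing is a correct weight-$(-1)$ prefactor.} The paper's derivation has the right idea. On $Y$ one has $(T^2-4S^3)+27(S-1)^2=0$. Homogenizing with a $P$ of weight $-1$ gives $(T^2-4S^3)P^{12}+27S^2P^8-54SP^4+27=0$, a cubic in $u=P^4$ that has $u=1$ as a root on $Y$. Its Cardano root, however, is
$P^4=-\bigl(27S^2+C+(81S^4+162ST^2)/C\bigr)/\bigl(3(T^2-4S^3)\bigr)$ with $C^3=Q$.
The displayed twelfth root equals $C^{1/4}$, so the paper records the Cardano radicand instead of the root. A simpler valid choice is $P=\bigl((1+\alpha\circ\rho)/S\bigr)^{1/4}$, which is exactly your fallback applied to the weight-$(-1)$ invariant $S^{-1/4}$.

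\emph{The same slip affects your $\rho$.} Lemma \ref{lem:lambda-radical-expression}, on which $\rho$ rests, has the same error. Its displayed $\alpha$ is the auxiliary quantity $C_\alpha$ for $\alpha^3+(3+27J)\alpha^2+3\alpha+1=0$, not a root. At $J=-1/4$ (i.e. $\lambda=-1$) it gives $\tfrac94$ up to cube roots of unity, while the roots are $2,2,-\tfrac14$. It also tends to $\infty$ on every branch as $J\to\infty$, so the branch with $\lambda\to 0$ that you select is not among the displayed ones. The correct root is $\alpha=-\tfrac13\bigl(3+27J+C_\alpha+81J(2+9J)/C_\alpha\bigr)$.

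With these two corrections your argument (which is the paper's) goes through, but it proves a corrected formula, not the statement as displayed.
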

\begin{proof}
    The proof will follow the same lines as that of Theorem \ref{thm:periods-double-covers}. First, we combine Lemmas \ref{lem:analytic-lifting}, \ref{lem:lambda-radical-expression}, and \ref{lem:periods-legendre-form-hypersurfaces-p2} to construct a $G$-invariant function on $V^o$. Let $U$ be the Zariski open set in $V$ defined in Lemma \ref{lem:analytic-mapping}. By Lemma \ref{lem:periods-legendre-form-hypersurfaces-p2}, we know that the period formula should agree with the known solution $\varphi$ of $E'(2, 4)$ on $Y$. In an analytic neighborhood $U_p$ of any $p \in U$, the map $\rho : U_p \to Y$ is defined once a branch of the radical function in Lemma \ref{lem:lambda-radical-expression} is chosen. Then by the analytic mapping lemma, Lemma \ref{lem:analytic-lifting}, $\varphi(\rho(Z))$ is an analytic function on $U_p$ and constant on intersections of $G$-orbits with $U_p$. As a special case, we can take $p$ to lie in $Y$, and by choosing the branch that acts as the identity on $p$, the domain $U_p$ of the map $\rho$ intersects $Y$, and $\rho$ acts as the identity on $U_p \cap Y$. Hence, in this case, $\varphi(\rho(Z))$ additionally restricts to $\varphi$ on $U_p \cap Y$.
    
    Next, we seek a prefactor which is $\SL_3(\C)$-invariant, homogeneous of degree $-3$ with respect to the diagonal copy of $\C^\times$ in $\GL_3(\C)$ -- equivalently, homogeneous of degree $-1$ in the coordinates $z_{ijk}$ of $V$ -- and constant on $Y$. We will construct it by taking advantage of the algebraic relations between the restrictions to $Y$ of the Aronhold invariants $S$ and $T$. Namely, from the above forms of the restrictions of $S$ and $T$ to $Y$, it is easy to check that the equation
    \[(T^2 - 4 S^3) + 27 S^2 - 54 S + 27 = 0\]
    holds on $Y$. Therefore, the prefactor $P$ must satisfy
    \[(T^2 - 4 S^3) P^{12} + 27 S^2 P^8 - 54 S P^4 + 27 = 0\]
    on $Y$. This is a $G$-invariant polynomial by degree considerations, hence it holds on all of $V$. Computing by the cubic formula, we obtain the expression
    \[P(Z) = \sqrt[12]{\frac{729 (T^4 + 10 S^3 T^2 - 2 S^6) \pm 729  T \sqrt{(T^2 - 4 S^3)^3}}{2}}\]
    for our desired prefactor. The result is that the function
    \[\Pi(Z) = P(Z) \varphi(\rho(Z))\]
    agrees with $\varphi$ on a neighborhood in $Y$ and is annihilated by the symmetry operators of the tautological system. The period of this CY family also has these properties by Lemma \ref{lem:periods-legendre-form-hypersurfaces-p2}, and therefore the function $\Pi$ is exactly the holomorphic period of this CY family by Lemma \ref{lem:uniqueness-lemma}.
\end{proof}

We end by remarking on a subtlety in this period formula. In the proof of Theorem \ref{thm:periods-hypersurfaces}, we began with the solution $\varphi(\lambda)$ on $Y$, defined in a neighborhood in $Y$ by the hypergeometric function $\leftindex_2{F}_1 \left( \frac{1}{2}, \frac{1}{2}; 1; \lambda \right)$ and extended to a multivalued function, which we also denote $\varphi(\lambda)$, on the rest of $Y$ by analytic continuation. Then we lifted $\varphi(\lambda)$ to a function $P(Z) \varphi(\rho(Z))$ on a neighborhood in $V^o$ where the second factor is constant on $G$-orbits, which extends to a multivalued function $\Pi(Z)$ on all of $V^o$ by analytic continuation. Each $G$-orbit in $V^o$ intersects $Y$ at multiple points $p_i$, and $P(Z)$ is constant on $Y$, so by $\SL_3(\C)$-invariance of $\varphi(\rho(Z))$ it appears that $\Pi(Z)$ should take the same value at each $p_i$ thanks to the analytic lifting lemma, Lemma \ref{lem:analytic-lifting}. But the $p_i$ are transformed into one another by elements of $G = \GL_3(\C)$ with nontrivial determinant, so the transformation properties of the tautological system suggest that $\Pi(Z)$ should take different values at the $p_i$. At first glance, this seems contradictory.

Recall, however, that the space of solutions to the Gauss hypergeometric differential equation is two-dimensional, in this case spanned by $\leftindex_2{F}_1 \left( \frac{1}{2}, \frac{1}{2}; 1; \lambda \right)$ and a modification thereof involving $\log \lambda$ and a power series in $\lambda$ whose coefficients involve the digamma function \cite{NIST:DLMF}. It turns out that the transformation properties of $\Pi(Z)$ encoded in the tautological system result from the relationship between the different branches of $\varphi(\lambda)$; in other words, the transformation properties of $\Pi(Z)$ under the $\GL_3(\C)$-action depend on whether one analytically continues $\Pi$ along $Y$ or along a $G$-orbit.

This discrepancy reflects a tension in the current form of the theory: from the standpoint of tautological systems, the periods should be considered as sheafy multivalued objects, a perspective which is not realized in the reduction procedure of $E(n, 2n)$. This highlights the pressing need to understand the reduction of $\D$-modules to subvarieties in order to adapt the method to more general contexts. We anticipate that invariant theory, and restriction properties in particular, will play a vital role in these questions.

\section*{Acknowledgments}
We thank Gerry Schwarz and the anonymous referee for their helpful comments on earlier versions of the manuscript. We also extend deep gratitude to Gerry Schwarz for insightful conversations at the early stages of this work which greatly influenced the direction of our research. K.S. thanks the Shanghai Institute for Mathematics and Interdisciplinary Sciences for their generous hospitality during which part of this work was completed.

\printbibliography

\end{document}